\newcommand{\equ}[1]{(\ref{#1})}
\newcommand{\be}{\begin{equation}}
\newcommand{\ee}{\end{equation}}
\newcommand{\bes}{\begin{equation*}}
\newcommand{\ees}{\end{equation*}}
\newcommand{\RR}{{\mathbb R}}
\newcommand{\HH}{{\mathbb H}}
\newcommand{\rr}{{\mathbb R}}
\newcommand{\ms}{{\mathbb S}}
\newcommand{\R}{{\mathbb R}}
\newcommand{\hh}{{\mathbb H}}
 \newtheorem{teo}{Theorem}[section]
 \newtheorem{definition}[teo]{Definition}
 \newtheorem{lemma}[teo]{Lemma}
\newtheorem{prop}[teo]{Proposition}
 \newtheorem{rema}[teo]{Remark}
 \newtheorem{cor}[teo]{Corollary}
\numberwithin{equation}{section}
  \newcommand{\calC}{{\mathcal C}}
 \newcommand{\calH}{{\mathcal H}}
\newcommand{\del}{\partial}
\newcommand{\sech}{\operatorname{sech}}
\newcommand{\e}{\epsilon}
\newcommand{\lp}{\left(}
\newcommand{\rp}{\right)}
\newcommand{\abs}[1]{\lvert#1\rvert}
\DeclareMathOperator{\divergence}{div}
\begin{document}

\title{\bf Layer solutions for the fractional \\Laplacian on hyperbolic space: existence, uniqueness and qualitative properties}
 \author{Gonz\'alez, Mar\'ia del Mar\footnote{Universitat Polit\`ecnica de Catalunya, ETSEIB-MA1, Av. Diagonal 647, 08028 Barcelona, Spain. Supported by grants MINECO MTM2011-27739-C04-01 and GENCAT 2009SGR345.}\\
S\'aez, Mariel\footnote{P. Universidad
 Cat\'olica de Chile. Supported by
 grants  Fondecyt regular 1110048  and proyecto Anillo ACT-125, CAPDE.}\\
 Sire, Yannick\footnote{Universit\'e Aix-Marseille, France. Supported by ANR projects ``PREFERED" and ``HAB".}}

\maketitle

\begin{abstract}
We investigate the equation
\begin{equation*}(-\Delta_{\mathbb H^n})^{\gamma} w=f(w)\quad\text{in }\mathbb H^{n},\end{equation*}
where $(-\Delta_{\mathbb H^n})^\gamma$ corresponds to the fractional Laplacian on hyperbolic space for $\gamma \in (0,1)$ and $f$ is a smooth nonlinearity that typically comes from a double well potential. We prove the existence of heteroclinic connections in the following sense; a so-called layer solution is a smooth solution of the previous equation converging to $\pm 1$ at any point of the two hemispheres $S_\pm \subset \partial_\infty \mathbb H^n$ and which is strictly increasing with respect to the signed distance to a totally geodesic hyperplane $\Pi.$ We prove that under additional conditions on  the nonlinearity uniqueness holds up to isometry. Then we provide several symmetry results and qualitative properties of the layer solutions. Finally, we consider the multilayer case, at least when $\gamma$ is close to one.
\end{abstract}

\section{Introduction and statement of the results}

We consider the following semilinear problem on hyperbolic space
\begin{equation}\label{initial-problem}(-\Delta_{\mathbb H^n})^{\gamma} w=f(w)\quad\text{in }\mathbb H^{n},\end{equation}
where $f:\mathbb R\to\mathbb R$, $\gamma\in(0,1)$ and $(-\Delta_{\mathbb H^n})^\gamma$ corresponds to the fractional Laplacian on hyperbolic space.

The definition of the operator  $(-\Delta_{\mathbb H^n})^\gamma$ may be introduced using standard functional calculus on hyperbolic space $\mathbb H^n$. However, it was shown in \cite{Banica-Gonzalez-Saez} that it may be realized as the Dirichlet-to-Neumann operator for a degenerate elliptic extension problem analogous to the one considered in \cite{Caffarelli-Silvestre} (for the Euclidean case) and \cite{Chang-Gonzalez} (for the manifold case). More precisely, let $u(x,y)$ be the solution of
\begin{equation*}\begin{cases}
\partial_{yy} u+\frac{a}{y}\partial_y u+\Delta_{\mathbb H^n} u = 0 &\quad\hbox{for } (x,y)\in \hh^n\times \rr_+,\\
u(x,0)=w(x) &\quad\hbox{for } x\in \hh^n,
\end{cases}\end{equation*}
then
$$(-\Delta_{\mathbb H^n})^\gamma w=-d_\gamma\lim_{y\to 0}y^a\partial_y u,$$
where $a=1-2\gamma$, $\gamma\in(0,1)$ and the constant is given in \eqref{constant-d}.
Then one is able to replace the original non-local problem \eqref{initial-problem} by the boundary reaction problem
\begin{equation}\label{problem-extension}
\begin{cases}
\partial_{yy} u+\frac{a}{y}\partial_y u+\Delta_{\mathbb H^n} u = 0 &\quad\hbox{for } (x,y)\in \hh^n\times \rr_+,\\
-y^a \partial_y u |_{y=0}=f(u) &\quad \hbox{for } x\in \hh^n,
\end{cases}
\end{equation}
up to a (positive) multiplicative constant in front of the nonlinearity.

Semilinear equations for the standard Laplacian on hyperbolic space have received a lot of attention. See \cite{Mancini-Sandeep,Castorina-Fabbri-Mancini-Sandeep,Bonforte-Gazzola-Grillo-Vazquez,
Berchio-Ferrero-Grillo}, for instance, for power nonlinearities. In this paper the nonlinearity $f$  comes from a double well potential, i.e., $f=-F'$, where $F$ is a scalar function satisfying $F(-1)=F(1)=0$, $F>0$ outside $\pm 1$ and $F'(-1)=F'(1)=0$. Since regularity of $F$ will not be an issue here, we assume that $F\in\mathcal C^\infty$. We also suppose that $F''(-1),F''(1)>0$, although this assumption may be weakened to the requirement that $f$ is non-increasing in $(-1,\tau)\cup (\tau,1)$ for some $\tau\in(0,1)$; we will not consider this generalization here. Our purpose is to study layer solutions for equation \eqref{initial-problem}, or equivalently, for \eqref{problem-extension}.

In the Euclidean case, this problem was thoroughly studied in \cite{Cabre-SolaMorales} for the half Laplacian, and then generalized to every power $\gamma\in(0,1)$ by \cite{Cabre-Sire:I, Cabre-Sire:II}. It turns out that many of their arguments can be easily generalized to the hyperbolic case. However, there are two important differences: the lack of translation invariance (which is replaced by invariance by isometries in hyperbolic space), and the presence of a metric weight that makes the problem non-integrable.

The interesting aspect here is to understand how the presence of curvature affects the results in the flat case. For the standard Laplacian ($\gamma=1$) layer solutions are quite well understood (see \cite{Farina-Sire-Valdinoci:I,Farina-Sire-Valdinoci:II} for positive curvature assumptions, and \cite{Birindelli-Mazzeo,Mazzeo-Saez,Pisante-Ponsiglione} for the hyperbolic space case). In \cite{Birindelli-Mazzeo} the authors show that arbitrary bounded global solutions reduce to functions of one variable if they have asymptotic boundary values on $\mathbb S^{n-1}=\partial_{\infty}\mathbb H^n$ which are invariant under a cohomogeneity one subgroup of the group of isometries on $\mathbb H^n$. Existence of these one-dimensional ODE solutions is also proved. In addition, in \cite{Mazzeo-Saez} multiple-layer solutions are considered. More precisely, they prove that for any collection of widely separated, non-intersecting hyperplanes in $\mathbb H^n$, there is a solution which has nodal set very close to this collection of hyperplanes. See also the related work \cite{Pisante-Ponsiglione} for a Gamma-convergence result on hyperbolic space.

Problem \eqref{initial-problem} models phase transitions, and it is also important in the study of  the conjecture posed by De Giorgi in \cite{De Giorgi}.  The standard De Giorgi conjecture stands for the Allen-Cahn equation (case $\gamma=1$) on Euclidean space. It has been solved for $n=2$ \cite{Gousshoub_Gui}, $n=3$ \cite{Ambrosio-Cabre} and with an additional natural assumption for $4 \leq n \leq 8$ \cite{savin}. In dimension $9$, there is a counter-example \cite{delpino}. The conjecture is about the flatness of level sets of bounded, smooth, increasing in one direction solutions. Here we address the question related to the Gibbons conjecture, i.e. the flatness of level sets of the solution of the Allen-Cahn equation with uniform limits towards $\pm 1$ as one of the variables goes to $\pm \infty$. In the case of the Laplacian in hyperbolic space, it has been addressed in \cite{Birindelli-Mazzeo}. In the Euclidean case, this is solved with different techniques in \cite{hamel}. In the case of the Euclidean fractional Laplacian, the Gibbons conjecture has been solved in \cite{Cabre-SolaMorales, Cabre-Sire:II}.

Our goal is to see how far we can push these results for the fractional Laplacian on hyperbolic space. Let us set up some notations. Let $\Pi$ be a totally geodesic hyperplane in $\mathbb H^n$. Then the usual hyperbolic metric can be written as the warped product of $\RR \times \HH^{n-1}$, with metric $g_{\mathbb H^n} = dt^2 + \cosh^2 t\,g_{\HH^{n-1}}$, where $t$ is the signed distance from $\Pi$. Note that the plane $\Pi$ divides the boundary $\mathbb S^{n-1}=\partial_\infty \mathbb H^n$ into two hemispheres, denoted by $S_+$ and $S_-$. In these coordinates
\[
\Delta_{\HH^n} = \del_{tt} + (n-1)\tanh t\, \del_t + \sech^2 t\, \Delta_{\HH^{n-1}}.
\]

\begin{definition}
We say that $w(x)$ is a layer solution of \eqref{initial-problem} if $w$ is a solution of \eqref{initial-problem} satisfying
\begin{itemize}
\item[i.] the asymptotic boundary condition that $w(x)$ converges to $\pm 1$ at any point in the interior of the two hemispheres $S_{\pm}\subset \partial_\infty\mathbb H^{n}$.

\item[ii. ] $\partial_t w>0$ in $\mathbb H^n$, where $\Pi(=\mathbb H^{n-1})$ is the totally geodesic hyperplane with boundary $S_-\cap S_+$, and $t$ the signed distance function to $\Pi$ in $\mathbb H^n$.
\end{itemize}
Analogously, one may say that $u(x,y)$ is a layer solution of \eqref{problem-extension} if
\begin{itemize}
\item[i. ] $u(x,0)$ converges to $\pm 1$ at any point in the interior of the two hemispheres $S_{\pm}$.

\item[ii. ] $\partial_t u(x,0)>0$ in $\mathbb H^n$.
\end{itemize}
\end{definition}

A one-dimensional solution $w$ of \eqref{initial-problem} is a solution that,  given a fixed  hyperplane $\Pi$, it only depends on the variable $t$. Our first theorem concerns existence and uniqueness:

\begin{teo}\label{thm-one-dimensional} There exists a solution $u(t,y)$ of the extension problem
\begin{equation}\label{1dimsol}\begin{cases}
H_au=0\quad &\text{for }(t,y)\in \mathbb R\times \mathbb R_+,\\
-y^a \partial_y u|_{y=0}=f(u) \quad &\text{for } t\in\mathbb R,
\end{cases}\end{equation}
that satisfies
\begin{align}&\partial_t u(t,0)>  0 \hbox{ for } t\in  \rr,\label{eq3} \\
& u(t,0)\to \pm 1 \hbox{ as } t\to \pm  \infty.\label{eq4}
\end{align}
Here the operator $H_a$ is given by
\be\label{operatorH}H_a=  \del_{yy}+\frac{a}{y}  \del_y+\del_{tt} + (n-1)\tanh t\, \del_t.\ee
This solution is unique up to normalization (e.g. $u(0,0)=0$).
\end{teo}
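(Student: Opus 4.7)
The plan is to adapt the variational construction of Cabr\'e--Sol\`a-Morales and Cabr\'e--Sire to the non-translation-invariant, weighted setting of this one-dimensional problem.

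\textbf{Existence via weighted minimization.} The operator $H_a$ is symmetric with respect to the measure $y^a\cosh^{n-1}(t)\,dt\,dy$, so the natural energy functional is
\begin{equation*}
\mathcal{E}_R(u) = \int_{-R}^{R}\int_0^R \tfrac{1}{2} y^a(u_t^2+u_y^2)\cosh^{n-1}(t)\,dy\,dt + \tfrac{1}{d_\gamma}\int_{-R}^R F(u(t,0))\cosh^{n-1}(t)\,dt.
\end{equation*}
I would minimize $\mathcal{E}_R$ on the cylinder $Q_R=(-R,R)\times(0,R)$ with Dirichlet data $u=\pm 1$ on $\{t=\pm R\}$ and $u=\tanh t$ on $\{y=R\}$. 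A minimizer $u_R$ exists by the direct method in the $A_2$-weighted Sobolev space, and truncation against $\pm 1$ strictly decreases energy (since $F\ge 0$ with $F(\pm 1)=0$), so $|u_R|\le 1$. The Euler--Lagrange equation yields $H_au_R=0$ in $Q_R$ together with the nonlinear Robin condition $-y^a\partial_yu_R|_{y=0}=f(u_R)$.

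\textbf{Monotonicity, passage to the limit, and asymptotic values.} Because the boundary data are strictly increasing in $t$, the strong maximum principle applied to the linearized equation satisfied by $\partial_t u_R$ gives $\partial_t u_R>0$ in $Q_R$. I would then pass to the limit $R\to\infty$ using the uniform interior $C^{\alpha}$ estimates of Fabes--Kenig--Serapioni for $A_2$-weighted degenerate elliptic equations, producing a solution $u_\infty$ of \eqref{1dimsol} with $|u_\infty|\le 1$ and $\partial_t u_\infty>0$. For the asymptotic condition \eqref{eq4}, monotonicity gives the existence of $\ell_\pm=\lim_{t\to\pm\infty}u_\infty(t,0)$; a blow-down argument along $t\to\pm\infty$ produces a $t$-independent solution $v(y)$ of the extension equation, forcing $v\equiv \ell_\pm$ with $f(\ell_\pm)=0$, and a strict lower bound on the layer's energy density rules out the unstable equilibrium $\ell_\pm=0$, leaving $\ell_\pm=\pm 1$.

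\textbf{Uniqueness.} Let $u_1,u_2$ be two layer solutions normalized by $u_i(0,0)=0$, and set $w=u_1-u_2$. Then $w$ satisfies $H_aw=0$ with the linearized Robin condition $-y^a\partial_yw|_{y=0}=c(t)w$, where $c(t)=\int_0^1 f'(\tau u_1+(1-\tau)u_2)(t,0)\,d\tau$. Since $F''(\pm 1)>0$ and $f=-F'$, one has $c(t)\to f'(\pm 1)=-F''(\pm 1)<0$ as $t\to\pm\infty$, so the linearized problem is coercive at infinity. Combining this decay with the positive Jacobi fields $\partial_t u_i>0$ and a Picone/Allegretto-type identity adapted to the weighted operator $H_a$ with Robin boundary term, one shows that the space of bounded solutions of the $c(t)$-linearized problem vanishing at $t\to\pm\infty$ is spanned by a single positive profile; the normalization $w(0,0)=0$ then forces $w\equiv 0$. \emph{The main obstacle} I foresee is making the Picone comparison rigorous in the degenerate weighted setting with a genuine Robin condition at $y=0$, and sharpening the blow-down step used above so that the unstable zero of $f$ is indeed excluded without imposing further structure on $F$.
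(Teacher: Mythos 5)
Your high-level variational strategy is in the right spirit, but there are several concrete gaps, two of which I think are fatal as written.

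\textbf{Monotonicity.} You propose to get $\partial_t u_R>0$ by applying the strong maximum principle to "the linearized equation satisfied by $\partial_t u_R$." In hyperbolic space this does not work: differentiating $H_a u=0$ in $t$ yields
\[
H_a(\partial_t u) + (n-1)\sech^2 t\,\partial_t u = 0,
\]
so $\partial_t u$ satisfies a zeroth-order perturbation of $H_a$ with a \emph{positive} coefficient in front of $\partial_t u$, which is exactly the sign that defeats the maximum principle (at a negative interior minimum of $\partial_t u$ there is no contradiction). This is precisely the place where the lack of translation invariance bites. The paper instead obtains monotonicity of the truncated minimizer from its \emph{stability}: since $u^T$ is an absolute minimizer, the second variation $Q(\xi)\ge 0$, and one tests with $\xi=(\partial_t u^T)^-$, after first using the comparison $u^T$ vs. $v=\tanh(\mu t)$ together with the Hopf boundary lemma to justify that $\xi$ vanishes on the lateral boundary. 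Integration by parts then forces $Q(\xi)=0$, whence $(\partial_t u^T)^-\equiv 0$. Alternatively, monotonicity for the limiting layer is re-derived in the uniqueness step via the stretching argument (see below); but a naive Hopf/max-principle argument for $\partial_t u_R$ on the truncated box is broken.

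\textbf{Nondegeneracy of the limit.} You pass $R\to\infty$ and assert that the limit is a layer, but you do not rule out that the zero level set of $u_R$ escapes to $\pm\infty$, in which case the limit would be trivial. This is a genuine issue in the hyperbolic setting, again because of the $(n-1)\tanh t$ drift. The paper handles it by first locating a zero $x_T$ with $|x_T|\le T-T^{1/4}$ from the energy/level-set estimates, then showing $x_T$ cannot diverge: if it did, the recentered functions $w_T(t,y)=u^T(t+x_T,y)$ would converge to a solution of the \emph{constant-drift} problem $\partial_{tt}w+(n-1)\partial_t w+\tfrac{a}{y}\partial_y w+\partial_{yy}w=0$, and multiplying this by $y^a\partial_t w$ and integrating yields $(n-1)\iint y^a(\partial_t w)^2=0$, contradicting $w(0,0)=0$ with limits $\pm1$. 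Relatedly, the choice of domain matters: the paper works on $\Omega_{T,R}$ with $R=T^{1/8}$, not $R=T$, precisely so the energy bound $CT^{1/4}$ can balance the exponential factor $\cosh^{n-1}t$. With your choice $Q_R=(-R,R)\times(0,R)$ the Dirichlet-energy estimate in the $y$-direction would be of the wrong order.

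\textbf{Asymptotic values.} Your blow-down argument only gives $f(\ell_\pm)=0$, and $f$ has an interior zero as well; the "strict lower bound on the layer's energy density" you invoke to exclude it is not spelled out and is exactly what needs a precise argument. The paper instead proves a necessary condition (Proposition \ref{sameheight}): a local minimizer with limits $L^\pm$ forces $F\ge F(L^-)=F(L^+)$ in $[-1,1]$, and the hypothesis $F>F(-1)=F(1)$ in $(-1,1)$ then pins $L^\pm=\pm1$. This is cleaner than ruling out the unstable zero by energy density.

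\textbf{Uniqueness.} Here you take a genuinely different route (a Picone/Allegretto-type identity around the positive Jacobi field $\partial_t u_i$), and you yourself flag the main obstacle. The paper's approach is different and sidesteps these issues: it replaces Euclidean \emph{sliding} by \emph{stretching} (scaling by $R>1$ in the upper half-space model, which is a hyperbolic isometry preserving $\Delta_{\hh^n}$). One shows $u_1\le u_2^R:=u_2(R\,\cdot\,,y)$ for all $R>1$ via the continuity method (open, closed, nonempty in $R$), using the maximum principle Lemma \ref{lemma-max1} with the set $\Sigma$ where one already has strict inequality, plus the Hopf lemma. Taking $R\to1$ gives $u_1\le u_2$ and by symmetry $u_1\equiv u_2$. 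Differentiating the family at $R=1$ also gives $t\,\partial_t u\ge 0$, i.e.\ monotonicity, for free. A Picone-type argument might be made to work, but the quadratic-form manipulations in the $A_2$-weighted setting with a nonlinear Robin boundary term are not straightforward, and you would still need to establish that $\partial_t u_i>0$ independently — which, as noted above, is itself nontrivial.

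In summary: the general variational template is right, but (i) your monotonicity step fails due to the bad-sign zeroth-order term; (ii) you omit the argument that the interface does not drift to infinity; (iii) the identification of $L^\pm=\pm1$ needs the local-minimality necessary condition rather than a blow-down heuristic; and (iv) the uniqueness proof in the paper uses stretching, an idea your Picone approach does not capture and which handles the hyperbolic non-translation-invariance cleanly.
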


The uniqueness statement above can be rephrased as ``unique up to stretching'', where
the notion of stretching will be precisely defined in Section \ref{section-uniqueness}. Our second theorem concerns the two-dimensional symmetry of solutions, indeed we show that indeed any layer solution  of \eqref{problem-extension} must be the one found in the previous Theorem \ref{thm-one-dimensional}:

\begin{teo}\label{thm-symmetry}
Let $u$ be a solution of \eqref{problem-extension} satisfying the asymptotic boundary condition that $u(x,y)$ converges uniformly to $\pm 1$ at any point in the interior of the two hemispheres $S_{\pm}\subset \partial_\infty\mathbb H^n$. Let $\Pi\equiv \mathbb H^{n-1}$ be the totally geodesic subspace with boundary $\overline{S_+}\cap \overline{S_-}$, and let $t$ be the signed distance function to this subspace in $\mathbb H^n$.
Then:
\begin{enumerate}
\item $u(x,y)$ depends only on $t,y$, where $t(x)=\textrm{dist}(x,\Pi)$.
\item  we have the monotonicity property $\partial_t u(\cdot, y)>0$, for every $y$.
\item $u$ is unique up to isometries on the $\hh^n$ variable.   \end{enumerate}
\end{teo}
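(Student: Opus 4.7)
The plan is to prove the theorem in three steps: establish the monotonicity $\partial_t u > 0$ of~(2), reduce $u$ to a function of $(t, y)$ only for~(1), and conclude~(3) by invoking Theorem~\ref{thm-one-dimensional}. Throughout, I would work in warped product coordinates $g_{\mathbb H^n} = dt^2 + \cosh^2 t\, g_{\mathbb H^{n-1}}$, in which the extension problem \eqref{problem-extension} is governed by the operator \eqref{operatorH}.

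For the monotonicity I would adapt the sliding method to the hyperbolic setting. For $\lambda > 0$, set $u^\lambda(t, x', y) := u(t + \lambda, x', y)$. Unlike the Euclidean case, this is not itself a solution of the extension problem: the factor $\tanh t$ in $H_a$ breaks $t$-translation invariance, and a direct computation gives
\[
H_a u^\lambda(t, x', y) \;=\; (n-1)\bigl[\tanh t - \tanh(t+\lambda)\bigr]\, \partial_t u(t+\lambda, x', y),
\]
so once the positivity of $\partial_t u$ is established, $u^\lambda$ becomes a strict supersolution. Bootstrapping: using the \emph{uniform} asymptotic convergence $u \to \pm 1$ on $S_\pm$, one verifies $u^\lambda \geq u$ on $\mathbb H^n \times \mathbb R_+$ for $\lambda$ large, then slides $\lambda$ down to the infimum $\lambda^* \geq 0$ of values for which this still holds. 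The strong maximum principle for $H_a$ together with the Hopf boundary lemma at $y = 0$ (adapted to the singular weight $y^a$, $a = 1 - 2\gamma$, and to the nonlinear Neumann condition $-y^a \partial_y u|_{y=0} = f(u)$) exclude interior or boundary touching unless $\lambda^* = 0$. Therefore $u^\lambda \geq u$ for every $\lambda > 0$; differentiating at $\lambda = 0^+$ and applying the strong maximum principle once more upgrades this to the strict inequality $\partial_t u > 0$.

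For the symmetry, let $G \subset \mathrm{Isom}(\mathbb H^n)$ be the subgroup of isometries preserving $\Pi$ and each hemisphere $S_\pm$ setwise. In warped coordinates, $G$ acts by $(t, x') \mapsto (t, \phi(x'))$ with $\phi \in \mathrm{Isom}(\mathbb H^{n-1})$, transitively on each slice $\{t = \text{const}\}$. For any $\phi \in G$, the function $u \circ \phi$ is another solution of \eqref{problem-extension} with identical asymptotic data. Running the same sliding/comparison argument, now between $u$ and $u \circ \phi$ via the translates $(u\circ\phi)^\lambda$, forces $u \circ \phi \equiv u$. Consequently $u(x, y)$ depends only on $(t(x), y)$, which is~(1). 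The reduced function then solves the 1D problem \eqref{1dimsol} with $\partial_t u > 0$ and the prescribed $\pm 1$ limits, so Theorem~\ref{thm-one-dimensional} applies: $u$ coincides with the 1D layer solution up to the normalization stated there, and this remaining freedom is exactly what isometries of $\mathbb H^n$ acting on the solution provide, giving~(3).

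The principal technical obstacle is the rigorous justification of the strong maximum principle and Hopf boundary lemma on the unbounded domain $\mathbb H^n \times \mathbb R_+$ for the degenerate operator $H_a$ with the singular weight $y^a$ and the nonlinear Neumann condition, together with ruling out ``touching at infinity'' in every sliding step. The latter requires exploiting the \emph{uniform} asymptotic convergence on the hemispheres $S_\pm$ and constructing barriers adapted to both the ideal boundary of $\mathbb H^n$ and the regime $y \to \infty$; this is where the specifics of the hyperbolic geometry enter in an essential way.
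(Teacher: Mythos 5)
Your sliding argument has a genuine gap at its core, and you actually flag the symptom yourself without resolving it. Translation in the warped-product coordinate $t$ is not a hyperbolic isometry, so $u^\lambda(t,x',y)=u(t+\lambda,x',y)$ is not a solution of \eqref{problem-extension}, and the error term you compute,
$H_a u^\lambda = (n-1)\bigl[\tanh t - \tanh(t+\lambda)\bigr]\,\partial_t u(t+\lambda,x',y)$,
only has a favorable sign once you already know $\partial_t u\ge 0$. That is precisely the conclusion you are trying to reach, so the proposed ``bootstrap'' is circular: in the sliding method the open/closed argument requires the difference $u^\lambda-u$ to satisfy a linear equation amenable to the maximum principle, and here it instead satisfies $H_a(u^\lambda-u)=-c(t)\,\partial_t u^\lambda$ with $c>0$, which gives no sign information without the monotonicity in hand. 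The same defect infects the second half of your argument, where you compare $u$ with $u\circ\phi$ ``via the translates $(u\circ\phi)^\lambda$.''

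The paper circumvents this by replacing $t$-translation with a genuine hyperbolic isometry: in the upper half-space model, send $P_-\mapsto 0$ and $P_+\mapsto\infty$ via a M\"obius map, and compare $u$ with its \emph{stretching} $u_R(x)=u(Rx)$, $R>1$. Since dilation by $R$ is an isometry of $\mathbb H^n$, $u_R$ is again a solution of \eqref{problem-extension}, and the difference $v_R=u_R-u$ obeys a linear problem with boundary condition $-y^a\partial_y v_R = d_R\,v_R$, to which Lemma~\ref{lemma-max1} applies (this is exactly Lemma~\ref{lemma-rescaled} and Corollary~\ref{cor-monotonicity}). Moreover, the paper runs this comparison not only along the $t$-axis but along \emph{every} curve of constant geodesic curvature joining a point of $S_-$ to a point of $S_+$, and then deduces two-dimensional symmetry by a geometric argument rather than a group-invariance one: at each $P$, approximating curves from either side show that $\nabla u(P)\cdot X=0$ for all $X\in T_P\Pi_t$, so $\nabla u(P)$ is orthogonal to the foliation $\{\Pi_t\}$ and $u$ depends only on $(t,y)$. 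Once reduced to the $(t,y)$ problem, Theorem~\ref{thm-one-dimensional} gives uniqueness up to stretching, which is the ``up to isometry'' statement. Your overall architecture (monotonicity $\to$ symmetry $\to$ invoke the 1D theorem) matches the paper, but the comparison mechanism you chose is the one the hyperbolic geometry specifically rules out, and the stretching replacement is not a cosmetic fix but the essential idea.
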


In \cite{Cabre-SolaMorales, Cabre-Sire:I} it is shown that in the Euclidean case equation \eqref{problem-extension} has a Hamiltonian structure. In addition, Theorems 1.3 and  2.3 in \cite{Cabre-SolaMorales} and \cite{Cabre-Sire:I}, respectively, provide an identity, that implies, among other things, that solutions to  \eqref{problem-extension}  converge as $\gamma \to 1$ to a solution of the local equation. In our case, the Hamiltonian identity does not hold:
There is no conserved quantity along the trajectories, but it dissipates when $t\to+\infty$. However, these results suffice to study the limit when $\gamma\to 1$:

\begin{teo}\label{thm-limit}
Let $\{\gamma_k\}$ be a sequence of real numbers in $(0,1)$ such that $\gamma_k \uparrow 1$ when $k\to\infty$. Let $\{w_k\}$ be a sequence of layer solutions of
$$(-\Delta_{\mathbb H^n})^{\gamma_k} w_k=f(w_k)\quad\text{in }\mathbb H^n,$$
such that $w_k(0)=0$.
Then there exists a function $\overline w$ such that
$$\lim_{k\to\infty} w_{k}=\overline w,$$
where the convergence is $\mathcal C^2$ uniform on compact sets. Moreover,
the function $\overline w$ is the (unique) layer solution of
\be\label{layer1}-\Delta_{\mathbb H^n } \overline w=f(\overline w)\ee
with $\overline w(0)=0$.
\end{teo}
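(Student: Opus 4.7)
The plan is to reduce to the one-dimensional setting via Theorem~\ref{thm-symmetry}, extract a convergent subsequence using uniform estimates, pass to the limit in the equation, and finally invoke uniqueness of layer solutions for the local equation. By Theorem~\ref{thm-symmetry} each $w_k$ depends only on the signed distance $t$ to some totally geodesic hyperplane $\Pi_k$ and is strictly monotone across it; since $w_k(0)=0$ we must have $0\in\Pi_k$, and by composing with isometries of $\mathbb H^n$ we may arrange that $\Pi_k=\Pi$ is the same hyperplane for every $k$. Thus $w_k=W_k(t)$ with $W_k(0)=0$ and $W_k'>0$, and the extensions $u_k(t,y)$ solve \eqref{1dimsol} with $a=a_k=1-2\gamma_k\to -1$.

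Next, I would establish uniform $C^{2,\alpha}_{\mathrm{loc}}$ bounds on the family $\{w_k\}$. Since $|w_k|\le 1$ and $f\in\mathcal{C}^\infty$, the nonlinearity $f(w_k)$ is bounded uniformly in $k$. Combining Schauder-type estimates for the degenerate extension problem in the hyperbolic setting of \cite{Banica-Gonzalez-Saez} (the analogue of Caffarelli--Silvestre), which remain uniform for $\gamma_k$ in a compact subset of $(0,1]$ and up to the boundary $y=0$, with interior elliptic estimates in $t$, yields uniform $C^{2,\alpha}$ bounds on $W_k$ on every compact interval. By Arzelà--Ascoli and a diagonal extraction there is a subsequence $w_{k_j}\to\overline w$ in $C^2_{\mathrm{loc}}(\mathbb H^n)$; the limit satisfies $\overline w(0)=0$, $\partial_t\overline w\ge 0$, and $|\overline w|\le 1$.

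The main technical step is the convergence $(-\Delta_{\mathbb H^n})^{\gamma_k}w_k\to -\Delta_{\mathbb H^n}\overline w$ on compact sets. The cleanest approach is to use the heat-semigroup formula
\begin{equation*}
(-\Delta_{\mathbb H^n})^{\gamma}w=\frac{1}{|\Gamma(-\gamma)|}\int_0^{\infty}\bigl(w-e^{s\Delta_{\mathbb H^n}}w\bigr)\,\frac{ds}{s^{1+\gamma}},
\end{equation*}
combined with an integration by parts in $s$ which exhibits $-\Delta_{\mathbb H^n}$ as the formal limit $\gamma\to 1$. With the uniform $C^2$ bounds on $w_k$ and the decay of the hyperbolic heat kernel, dominated convergence transfers the limit through the integral and identifies the limit equation $-\Delta_{\mathbb H^n}\overline w=f(\overline w)$, up to the multiplicative constants $d_{\gamma_k}$, whose limit as $\gamma_k\to 1$ is finite and positive. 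Alternatively, in the extension picture one checks that the Dirichlet-to-Neumann maps associated with $H_{a_k}$ converge to $-\Delta_{\mathbb H^n}$, mirroring the Euclidean arguments of \cite{Cabre-SolaMorales, Cabre-Sire:I}.

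It remains to check that $\overline w$ is genuinely a layer solution of \eqref{layer1}. Strict monotonicity $\partial_t\overline w>0$ follows from the strong maximum principle applied to $\partial_t\overline w$, which solves a linearized equation with bounded coefficients. For the asymptotic values, monotonicity and boundedness yield limits $\ell^\pm$ at $t=\pm\infty$, and a translation argument in $t$ combined with the limit equation forces $f(\ell^\pm)=0$; to rule out $\ell^\pm=0$ I would invoke the dissipated Hamiltonian identity alluded to just before the statement of the theorem, which gives uniform energy control along the $t$-direction and prevents the profile from collapsing to a constant. This shows $\overline w$ is a layer solution of \eqref{layer1} with $\overline w(0)=0$; by the uniqueness of such layer solutions for the standard Laplacian on $\mathbb H^n$ \cite{Birindelli-Mazzeo}, $\overline w$ is the unique such object, so the full sequence (and not merely a subsequence) converges. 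The hardest point is the third paragraph: justifying the convergence of the nonlocal operators uniformly on compact sets, together with ruling out degeneracy of the limit via the dissipated Hamiltonian.
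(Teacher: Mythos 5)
Your overall architecture matches the paper's: extract a convergent subsequence from uniform estimates, pass to the limit in the equation, and then argue that the limit is a genuine layer so that uniqueness from \cite{Birindelli-Mazzeo} applies. For the operator convergence you offer a heat-semigroup route as the ``cleanest approach,'' while the paper instead works entirely in the extension picture and quotes the passage-to-limit results of \cite{Cabre-Sire:I} together with the uniform regularity of Section~\ref{subsection-regularity} (and keeps track of the normalizing constant $d_\gamma$ via \eqref{limits-gamma}). Your alternative is in principle viable, but note that as $\gamma\to 1$ the prefactor $1/|\Gamma(-\gamma)|$ blows up, so the ``integration by parts in $s$'' is not a bookkeeping step but the entire content, and would need to be carried out explicitly with the uniform $C^2$ bounds to control the $s\to 0$ singularity; the paper avoids this by never leaving the extension formulation.

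The genuine gap is in the identification of $L^\pm=\pm 1$. Your remark that a translation argument forces $f(\ell^\pm)=0$ is insufficient: a double-well $F$ necessarily has at least one interior critical point, so $f$ vanishes at some $\tau\in(-1,1)$ and ``$f(\ell^\pm)=0$'' does not rule out $\ell^\pm=\tau$. The paper never uses that weak statement. Instead it obtains the quantitative inequality
\[
F(L^+)\leq F(1)
\]
in two steps. First, multiplying the limit ODE $-\partial_{tt}\overline w-(n-1)\tanh t\,\partial_t\overline w=f(\overline w)$ by $\partial_t\overline w$ and integrating on $(t,\infty)$ gives a Pohozaev-type identity \eqref{formula4} for $\overline w$ with a dissipation term coming from $(n-1)\tanh t$. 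Second, using the explicit Hamiltonian $V(t)$ of Section~\ref{section-Hamiltonian} for the extensions $u_\gamma$ and passing $\gamma\to 1$ with \eqref{formula1}--\eqref{formula2} (which rely on the extension-picture limits of \cite{Cabre-Sire:I} and on \eqref{limits-gamma}) yields the inequality \eqref{formula3} with the \emph{same} dissipation term. Subtracting produces $F(L^+)\leq F(1)$, and since $F>F(1)=F(-1)$ on $(-1,1)$ this forces $|L^\pm|=1$; monotonicity plus $\overline w(0)=0$ then fix the signs. Your proposal correctly senses that the Hamiltonian dissipation is the key, but ``prevents the profile from collapsing to a constant'' does not by itself give the needed inequality on $F(L^\pm)$, and without it the argument that $\overline w$ is a layer (and hence that the whole sequence converges by uniqueness) does not close.
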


For $\gamma=1$ is possible to construct entire solutions to \eqref{problem-extension} which are not layer solutions. In \cite{PKPW}, the authors constructed an entire solution to \eqref{problem-extension} in $\rr^2$ with nodal set asymptotic to a family of parallel hyperplanes (which satisfy a condition on their separation). A similar result was proved for hyperbolic space in \cite{Mazzeo-Saez}.  We refer to these solutions as {\em  multilayer} solutions.
The construction of multilayer solutions involves gluing techniques that have not been developed for non-local operators. Nonetheless,
following \cite{Gonzalez-Mazzeo-Sire}, we may prove existence of multilayer solutions for $\gamma$ close to 1.  More precisely, we use a perturbation argument and the results in \cite{Mazzeo-Saez} to show:

\begin{teo}\label{thm-multilayer}
Consider a family of non-intersecting totally geodesic hyperplanes $\{\Pi_j\}_{j=1}^N$ in $\hh^n$ which are widely separated. Then, there is $\delta>0$ such that  for every
 $\gamma\in(1-\delta,1]$
there exists a solution $u_\gamma$ that satisfies \eqref{problem-extension} and vanishes asymptotically on $\calH:=\cup_{j=1}^N \Pi_j$.\end{teo}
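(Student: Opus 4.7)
The plan is to follow the perturbation strategy of \cite{Gonzalez-Mazzeo-Sire}: take the multilayer solution $\overline u_1$ of the local equation $-\Delta_{\mathbb H^n}\overline u_1 = f(\overline u_1)$ produced by \cite{Mazzeo-Saez} for the given configuration $\calH$ of widely separated hyperplanes, and deform it into a solution of the fractional problem \eqref{problem-extension} for $\gamma$ close to $1$. First I would build an approximate solution $U_\gamma$ to \eqref{problem-extension} by extending $\overline u_1$ into $\hh^n\times\rr_+$ via the Poisson-type kernel for the degenerate operator $H_a$, $a=1-2\gamma$; as $\gamma\to 1$ (hence $a\to -1$) this extension converges to the trivial extension $U_1(x,y)=\overline u_1(x)$. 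I would then verify that $U_\gamma\to U_1$ in a suitable weighted H\"older space on $\hh^n\times \rr_+$, and that the defect
\[
E_\gamma := -y^a\partial_y U_\gamma\big|_{y=0} - f(U_\gamma|_{y=0})
\]
goes to $0$ in an appropriate norm as $\gamma\uparrow 1$ (absorbing the constant $d_\gamma$ into the nonlinearity).

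Next I would set up a Lyapunov--Schmidt reduction adapted from \cite{Mazzeo-Saez}. The linearization of the local problem at $\overline u_1$ has a finite-dimensional approximate kernel spanned, modulo rapidly decaying terms, by the Jacobi fields $\{\partial_{\nu_j}\overline u_1\}_{j=1}^N$ coming from normal translations of each hyperplane $\Pi_j$; outside this $N$-dimensional space the linearization is an isomorphism between suitable weighted function spaces. One introduces $N$ geometric parameters $s=(s_1,\dots,s_N)$ encoding small normal displacements of the hyperplanes, and looks for a correction $v$ transverse to the approximate kernel. The fractional linearization $L_\gamma$ at $U_\gamma$ reads $H_a v=0$ on $\hh^n\times\rr_+$ with the Robin-type condition $-y^a \partial_y v|_{y=0} = f'(U_\gamma|_{y=0})\, v|_{y=0}$; via the extension-to-Dirichlet-to-Neumann correspondence this is just $(-\Delta_{\hh^n})^\gamma v - f'(\overline u_1+O(\gamma))v$. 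I would show that, after projection off the approximate kernel, $L_\gamma$ is uniformly invertible on suitable weighted spaces for $\gamma\in(1-\delta,1]$, by combining the spectral information for the $\gamma=1$ linearization from \cite{Mazzeo-Saez} with a resolvent continuity argument analogous to Theorem \ref{thm-limit}.

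Given uniform invertibility, existence reduces to a contraction mapping. Writing $u=U_\gamma+v$, equation \eqref{problem-extension} becomes
\[
L_\gamma v \;=\; E_\gamma + Q_\gamma(v), \qquad Q_\gamma(v):=f(U_\gamma+v)-f(U_\gamma)-f'(U_\gamma)v,
\]
where $E_\gamma$ is small and $Q_\gamma$ is superlinear in $v$. A standard Banach fixed point argument produces a unique transverse correction $v=v(s,\gamma)$ for each parameter $s$ and each $\gamma$ close to $1$. The projection of \eqref{problem-extension} onto the $N$-dimensional approximate kernel gives $N$ scalar balancing equations for $s$; at $\gamma=1$ these reduce to the balancing equations solved in \cite{Mazzeo-Saez} in the wide separation regime, and the implicit function theorem yields a solution $s(\gamma)$ for all $\gamma\in(1-\delta,1]$, thereby producing the desired $u_\gamma$ which vanishes asymptotically on $\calH$.

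The main obstacle I anticipate is the uniform-in-$\gamma$ control of $L_\gamma$ on the complement of the approximate kernel. At $\gamma=1$ the linearization is a classical elliptic operator on a symmetric space and its Fredholm theory on appropriate exponentially weighted H\"older spaces is well developed; for $\gamma<1$ one must instead handle the degenerate elliptic extension in $y$ jointly with the non-trivial hyperbolic weights in $t$, and verify that the approximate kernel of $L_\gamma$ stays close (in a strong enough topology) to that of $L_1$ and that the orthogonal spectral gap persists uniformly. This is exactly the step where the strategy of \cite{Gonzalez-Mazzeo-Sire} must be adapted to the present hyperbolic multilayer setting, with Theorem \ref{thm-limit} providing the basic one-layer convergence needed to localize the argument near each individual $\Pi_j$.
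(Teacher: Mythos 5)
Your overall strategy — start from the Mazzeo--S\'aez multilayer solution $u_1$ for $\gamma=1$ and deform it for $\gamma$ close to $1$ — matches the paper. The decomposition $u=U_\gamma+v$, $L_\gamma v = E_\gamma + Q_\gamma(v)$ is also the same in substance (your $U_\gamma|_{y=0}$ is just $u_1$, so your defect $E_\gamma$ is exactly $(-\Delta_{\HH^n})^\gamma u_1 - f(u_1)$). However, the route you then take is genuinely different and considerably heavier than what the paper does.

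The paper does \emph{not} set up a Lyapunov--Schmidt reduction. It observes that $L_1 := (-\Delta_\HH)^\gamma|_{\gamma=1} - f'(u_1)$ is surjective on the exponentially weighted H\"older spaces $\calC^{k,\alpha}_{\mu,\delta}(\HH^n,\calH)$ of \cite{Mazzeo-Saez} (for an appropriate weight parameter $\mu$, the would-be kernel elements coming from normal translations of the $\Pi_j$ do not lie in these weighted spaces), and that $L_\gamma$ is a pseudodifferential edge operator of order $2\gamma$ in the sense of \cite{Mazzeo:edge-operators}. Fredholmness in the edge calculus, plus continuity in $\gamma$, gives that $L_\gamma : \calC^{2,\alpha}_{\mu,\delta} \to \calC^{0,\alpha+2(1-\gamma)}_{\mu-2\gamma,\delta}$ remains bounded and surjective for $\gamma$ near $1$, with right inverse $G_\gamma$. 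One then applies the implicit function theorem \emph{directly} to $N(\gamma,v)=G_\gamma[(-\Delta_{\HH^n})^\gamma(u_1+v)-f(u_1+v)]$ at the base point $(1,0)$, where $D_vN|_{(1,0)}=G_1L_1=\mathrm{Id}$. There are no geometric moduli $s=(s_1,\dots,s_N)$, no balancing system, and no uniform control of an approximate kernel to verify. Your finite-dimensional reduction is essentially re-deriving, in the fractional setting, the machinery that \cite{Mazzeo-Saez} already ran for $\gamma=1$; the paper instead uses the $\gamma=1$ output of that machinery (namely $u_1$ and the invertibility of $L_1$ on weighted spaces) as a black box. This sidesteps the two hardest steps you flag — persistence of the spectral gap off the approximate kernel and the $\gamma$-dependence of the Jacobi-field cokernel — entirely, at the cost of importing the edge pseudodifferential calculus to get Fredholmness of $L_\gamma$ uniformly near $\gamma=1$. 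A secondary difference: the paper works intrinsically with the nonlocal operator on $\HH^n$, whereas you set everything up in the degenerate extension on $\HH^n\times\RR_+$; both are legitimate, but the edge-operator argument is formulated downstairs.

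In short, your plan is plausible as an alternative and is not conceptually wrong, but it replaces a one-step implicit function theorem with a full gluing/reduction scheme whose key analytic inputs (uniform-in-$\gamma$ Fredholm estimates off the approximate kernel, solvability of the balancing system for $\gamma<1$) are left unverified and are precisely what the paper's choice of weighted spaces and edge theory is designed to avoid.
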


In the previous theorem by ``widely separated'' we mean that there is a constant $D$ such that if minimal distance among any two planes of the previous configuration is larger than $D$ then the result holds. This is a technical condition that might not be necessary.\\

The structure of the paper is the following: In Section 2 we give a quick introduction about the definition of the fractional Laplacian on hyperbolic space and the extension problem. Then, in Section 3 we give the necessary preliminary results on regularity and maximum principles. Section 4 is a necessary condition for the existence of the layers. The main result on the construction of a layer solution is contained in Section 5. In the next section we give the proof of Theorem \ref{thm-symmetry} on the two dimensional symmetry of solutions. Section 7 contains some auxiliary Hamiltonian estimates that will be used in Section 8, the passage to the limit $\gamma\to 1$. Finally, Section 9 deals with the multilayer construction and the proof of Theorem \ref{thm-multilayer}.

For the rest of the paper, we take $\gamma\in(0,1)$ and thus, $a\in(-1,1)$.

\section{The fractional Laplacian on hyperbolic space}

In this section we summarize the results in \cite{Banica-Gonzalez-Saez}. For further details we refer the reader to that work.
\subsection{The model}\label{model}

Several models of the $n$ dimension hyperbolic space $\hh^n$ have been considered in the literature.
 We may define $\HH^n$ as the upper branch of a hyperboloid in $\rr^{n+1}$ with the metric induced by the Lorentzian metric in $\rr^{n+1}$ given by $ds^2-|dx|^2$. More precisely, we take
\begin{equation*}\begin{split}
\hh^n&=\{(s,x)\in \rr\times \rr^{n}: s^2-|x|^2=1, \; s>0\} \\
&=\{(s,x)\in \rr\times \rr^{n}: (s,x)= (\cosh r, \sinh r \omega), \; r\geq 0, \; \omega \in \ms^{n-1}\},
\end{split}\end{equation*}
with the metric
 $$g_{\mathbb H^n}=dr^2+\sinh^2 r \,d\omega^2,$$ where $d\omega^2$ is the metric on $\ms^{n-1}$.  Notice that the Lorentzian metric induces an internal product that we denote by $[\cdot, \cdot]$. More precisely,
$$[(s_1,x_1), (s_2,x_2)]=s_1 s_2-x_1\cdot x_2.$$
Under these definitions we have that the Laplace-Beltrami operator is given by
$$\Delta_{\hh^n}=\partial_{rr}+(n-1) \frac{\cosh r}{\sinh r} \partial_r+\frac{1}{\sinh^2 r}\Delta_{\ms^{n-1}}$$
and the volume element is $$d\Omega=\sinh^{n-1} r\;dr \; d\omega.$$

Similarly, it is possible to write the Poincar\'e ball model of hyperbolic space. Notice that via stereographic projection one obtains the half-space model:
$$\hh^n=\{(x_1,\ldots,x_n): x_n>0\},$$ with the metric
$$g_{\mathbb{H}^n}=\frac{dx_1^2+\ldots+dx_n^2}{x_n^2}.$$
We choose coordinates $x=(\tilde x,x_n)$, $\tilde x=(x_1,\ldots,x_{n-1})$. The volume element will be denoted by
$$dV_{\mathbb H^n}(x)=(x_n)^{n}\;d\tilde x \;dx_n.$$
Under this parametrization the Laplace-Beltrami operator is given by
\begin{equation}
\label{Laplace-Beltrami}\Delta_{\hh^n} =x_n^2\Delta_x -(n-2)x_n\partial_n .\end{equation}
Here $\Delta_x$ denotes the Euclidean Laplacian in coordinates $x_1,\ldots,x_n$.\\

Let $\Pi$ be a totally geodesic hyperplane in $\mathbb H^n$. Then the hyperbolic metric can be written as the warped product of $\RR \times \HH^{n-1}$, with metric
\be
g = dt^2 + \cosh^2 t\, g_{\HH^{n-1}},
\label{decompH}\ee
where $t$ is the signed distance from $\Pi$, and $z\in\mathbb H^{n-1}$. Note that the plane $\Pi$ divides the boundary $\partial_\infty \mathbb H^n=\mathbb S^{n-1}$ into two hemispheres, denoted by $S_+$ and $S_-$.
In these coordinates,
\be\label{Laplacian1}
\Delta_{\HH^n} = \del_{tt} + (n-1)\tanh t\, \del_t + \sech^2 t\, \Delta_{\HH^{n-1}}.
\ee

\subsection{Fourier transform and the fractional Laplacian on hyperbolic space}

We start by reviewing some basic facts about  Fourier transform on hyperbolic space.
We follow the notation from \cite{Banica-Gonzalez-Saez}, but further references can be found in
 \cite{GGG} and \cite{H}.

Consider the generalized eigenfunctions of the Laplace-Beltrami operator in $\mathbb H^n$:
$$h_{\lambda,\theta}(\Omega)=[\Omega, \Lambda(\theta)]^{i\lambda-\frac{n-1}{2}}, \quad \Omega\in \hh^n,$$
where $\lambda\in \rr$, $\theta \in \ms^{n-1}$ and  $\Lambda(\theta)=(1,\theta)$.
The Fourier transform is defined as
$$\hat{w}(\lambda, \theta)=\int_{\hh^n} w(\Omega)h_{\lambda,\theta}(\Omega)d\Omega,$$
for $\lambda\in\rr$, $\omega\in \ms^{n-1}$. Moreover, the following inversion formula holds:
$$w(\Omega)=\int_{-\infty}^{\infty}\int_{\ms^{n-1}}\bar{h}_{\lambda,\theta}(\Omega)\hat{w}
(\lambda,\theta)
\frac{d\theta \; d\lambda }{|c(\lambda)|^2},$$
where $c(\lambda)$ is the Harish-Chandra coefficient:
$$\frac{1}{|c(\lambda)|^2}=\frac 12\frac{|\Gamma(\frac{n-1}{2})|^2}{|\Gamma(n-1)|^2}\frac{|\Gamma(i\lambda+(\frac{n-1}{2})|^2}{|\Gamma(i\lambda)|^2}.$$
Then one may check that for every $w\in L^2(\hh^n)$,
\begin{align*}\widehat{\Delta_{\hh^n} w}=-\tfrac{(n-1)^2+\lambda^2}{4}\,\hat{w}.
\end{align*}

\begin{definition}  Let $(-\Delta_{\mathbb{H}^n})^\gamma$ be the operator that satisfies
$$\widehat{(-\Delta_{\mathbb{H}^n})^\gamma w}= \left(\lambda^2+\tfrac{(n-1)^2}{4}\right)^\gamma\hat{w}.$$
Equivalently (due to the inversion formula)
\begin{align*}(-\Delta_{\mathbb{H}^n})^\gamma w (\Omega)=&\int_{-\infty}^{\infty}\int_{\hh^n}\left(\tfrac{(n-1)^2+\lambda^2}{4}\right)^\gamma L_\lambda(\Omega, \Omega') w(\Omega')
\frac{d\Omega' \; d\lambda }{|c(\lambda)|^2},
\end{align*}
where we have defined
$$L_\lambda(\Omega, \Omega') =\int_{\ms^{n-1}} \bar{h}_{\lambda,\theta}(\Omega) h_{\lambda,\theta}(\Omega')d\theta.$$
\end{definition}

In \cite{Banica-Gonzalez-Saez} it was shown that the fractional Laplacian may be calculated as the convolution with a radially symmetric, well behaved kernel. In particular:

\begin{teo}\label{thm:singular-integral} It holds that
\begin{align*}(-\Delta_{\mathbb{H}^n})^\gamma w (x)= \hbox{P.V.}\int_{\hh^n}(w(x')-w(x)) K_\gamma(\rho) \,dx',
\end{align*}
where $\rho=d_{\mathbb H^n}(x,x')$ and the kernel is explicitly given by:
\begin{itemize}
\item For $n\geq 3$ odd,
\begin{equation*}K_\gamma(\rho)=C_n\left(\frac{\partial_\rho}{\sinh\rho}\right)^\frac{n-1}{2}
\rho^{-\frac{1}{2}-\gamma}Z_{\frac{1}{2}+\gamma}\left(\tfrac{n-1}{2} \rho\right),\end{equation*}

\item When $n\geq 2$ is even,
$$K_\gamma(\rho)=C'_n
\int_\rho^\infty\frac{\sinh r}{\sqrt{\cosh r-\cosh\rho}}
\left(\frac{\partial_r}{\sinh r}\right)^\frac{n}{2}\left[r^{-\frac{1}{2}-\gamma}Z_{\frac{1}{2}+\gamma}\left(\tfrac{n-1}{2} r\right)\right]\, dr.$$
\end{itemize}
Here $Z_{\frac{1}{2}+\gamma}$ is the solution to the modified Bessel equation given by Lemma 2.2 in \cite{Banica-Gonzalez-Saez}, $C_n,C'_n$ are constants that depend on $n$ and  $\hbox{P.V.}$ denotes the principal value.

Additionally, $K_\gamma(\rho)$ has the asymptotic behavior:
\begin{itemize}
\item[\emph{i.}] As $\rho\to 0$,
 $$K_\gamma(\rho)\sim \frac{1}{\rho^{n+2\gamma}}.$$

\item[\emph{ii.}] As $\rho\to\infty$,
$$K_\gamma(\rho)\sim \rho^{-1-\gamma}e^{-(n-1)\rho}.$$
\end{itemize}

\end{teo}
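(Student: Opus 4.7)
The plan is to reduce the computation of $K_\gamma$ to a Mellin transform of the hyperbolic heat kernel via the subordination identity
\[(-\Delta_{\mathbb{H}^n})^\gamma w = \frac{1}{|\Gamma(-\gamma)|}\int_0^\infty \bigl(e^{s\Delta_{\mathbb{H}^n}}w - w\bigr)\,\frac{ds}{s^{1+\gamma}},\]
which matches the Fourier-multiplier definition already given since both sides have multiplier $(\tfrac{(n-1)^2}{4}+\lambda^2)^\gamma$. Because the heat semigroup acts by convolution against a radial kernel $p_s(\rho)$, this formally produces a convolution representation with kernel $K_\gamma(\rho)=\frac{1}{|\Gamma(-\gamma)|}\int_0^\infty p_s(\rho)\,s^{-1-\gamma}\,ds$, and the subtraction $w(x)$ is precisely what converts the divergent $s=0$ behavior into the principal-value singular integral near $\rho=0$. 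Everything then comes down to evaluating this $s$-integral and analyzing its asymptotics.

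For odd $n\ge 3$ I would substitute the classical closed-form heat kernel
\[p_s(\rho) = c_n\, s^{-1/2}\Bigl(\tfrac{-\partial_\rho}{\sinh\rho}\Bigr)^{(n-1)/2}\exp\!\Bigl(-\tfrac{(n-1)^2 s}{4}-\tfrac{\rho^2}{4s}\Bigr),\]
pull the $\rho$-derivatives out of the $s$-integral, and evaluate what remains via the classical identity $\int_0^\infty s^{-3/2-\gamma}e^{-\alpha s-\beta/s}\,ds = 2(\beta/\alpha)^{-(1+2\gamma)/4}K_{1/2+\gamma}(2\sqrt{\alpha\beta})$ with $\alpha=(n-1)^2/4$, $\beta=\rho^2/4$. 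Up to constants this produces exactly $\rho^{-1/2-\gamma}Z_{1/2+\gamma}(\tfrac{n-1}{2}\rho)$ inside the differential operator, which is the stated odd-dimensional formula. For even $n\ge 2$ no such product formula for $p_s$ exists directly; the standard remedy is an Abel-type descent that represents $K_\gamma$ as a weighted integral over an auxiliary variable $r\in[\rho,\infty)$ with weight $\sinh r/\sqrt{\cosh r-\cosh\rho}$. Iterating the intertwining relation for the radial Laplacian $n/2$ times and reusing the Bessel identity above then produces the stated even-dimensional formula.

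The two asymptotic claims follow from the standard behavior of $Z_{1/2+\gamma}$. As $z\to 0$, $Z_{1/2+\gamma}(z)\sim c z^{-1/2-\gamma}$, so the inner factor $\rho^{-1/2-\gamma}Z_{1/2+\gamma}(\tfrac{n-1}{2}\rho)$ scales like $\rho^{-1-2\gamma}$, and each application of $\partial_\rho/\sinh\rho\sim \rho^{-1}\partial_\rho$ raises the negative power by $2$; after $(n-1)/2$ applications one reaches $\rho^{-n-2\gamma}$, recovering the expected Euclidean local behavior (as it must, since the metric is locally flat). As $z\to\infty$, $Z_{1/2+\gamma}(z)\sim \sqrt{\pi/(2z)}\,e^{-z}$, so the inner factor decays like $\rho^{-1-\gamma}e^{-(n-1)\rho/2}$; each $\partial_\rho/\sinh\rho$ then contributes an extra $e^{-\rho}$ since $(\sinh\rho)^{-1}\sim 2e^{-\rho}$, yielding the claimed $e^{-(n-1)\rho}$ decay after $(n-1)/2$ iterations. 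The main obstacle I expect is the even-dimensional step—identifying the correct Abel-descent identity with the constants and spectral alignment that force the Bessel argument to be $\tfrac{n-1}{2}r$ rather than something shifted—together with a careful justification of the interchange of the descent integral with the derivative operators and with the singular $s$-integration, since the resulting kernel is hyper-singular (more singular than $\rho^{-n}$) and must be interpreted in the P.V.\ sense throughout.
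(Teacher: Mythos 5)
This result is stated in the paper as a quotation from \cite{Banica-Gonzalez-Saez} with no proof supplied, so there is no in-paper argument to compare against directly. Your heat-kernel subordination route is a legitimate way to reproduce it, and is most likely a genuinely \emph{different} route from the one in \cite{Banica-Gonzalez-Saez}: the surrounding material in Section~2 --- the definition of $(-\Delta_{\mathbb H^n})^\gamma$ as a Helgason--Fourier multiplier and the Poisson kernel $\mathcal P^\gamma_y$ written as $\int k_\lambda(\rho)\varphi_\gamma(\cdot)\,d\lambda$ --- points to a proof by direct spherical-Fourier inversion against the Plancherel density $|c(\lambda)|^{-2}$, with the Macdonald function arising from the (regularized) Fourier transform of $\big(\lambda^2+\tfrac{(n-1)^2}{4}\big)^\gamma$. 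Your route replaces this with Bochner subordination through the heat semigroup, the explicit closed-form hyperbolic heat kernel, the Mellin--Bessel identity $\int_0^\infty s^{\nu-1}e^{-\alpha s-\beta/s}\,ds = 2(\beta/\alpha)^{\nu/2}K_\nu(2\sqrt{\alpha\beta})$, and Millson's even-to-odd descent. Both land in the same place because the odd-dimensional heat kernel and the odd-dimensional spherical function share the same $(\partial_\rho/\sinh\rho)^{(n-1)/2}$ structure; what your route buys is that the Abel weight $\sinh r/\sqrt{\cosh r-\cosh\rho}$ in the even case comes for free from the heat kernel descent, rather than having to be extracted from the $\tanh$-type Plancherel measure. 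The price, which you correctly flag, is justifying the several interchanges of limits (subordination integral with the spatial convolution, $\rho$-derivatives with the $s$-integral, Abel integral with both) in a P.V.\ setting for a hypersingular kernel. Your asymptotic analysis of $K_\gamma$ at $0$ and $\infty$ is correct.

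One item to fix: the subordination identity as you wrote it, $(-\Delta)^\gamma w = |\Gamma(-\gamma)|^{-1}\int_0^\infty(e^{s\Delta}w-w)\,s^{-1-\gamma}ds$, has the wrong sign; since $\Gamma(-\gamma)<0$ for $\gamma\in(0,1)$, the correct normalized form is $|\Gamma(-\gamma)|^{-1}\int_0^\infty(w-e^{s\Delta}w)\,s^{-1-\gamma}ds$, so as written you compute $-(-\Delta_{\mathbb H^n})^\gamma w$. This happens to reproduce the $(w(x')-w(x))$ displayed in the theorem, but that sign convention is itself in tension with the positivity implicit in the asymptotic $K_\gamma(\rho)\sim\rho^{-n-2\gamma}$ and with the Euclidean convention $(-\Delta)^\gamma w = c_{n,\gamma}\,\mathrm{P.V.}\!\int\frac{w(x)-w(x')}{|x-x'|^{n+2\gamma}}\,dx'$ with $c_{n,\gamma}>0$; you should pin the sign down once (say via $n=1$ or the small-$\rho$ Euclidean limit) and then track it through $C_n,\,C_n'$ rather than letting two compensating slips produce an agreement.
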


There are several ways to define the Sobolev spaces on hyperbolic space and more generally on manifolds. We refer to \cite{Triebel:II} and the references therein.
For a given  $n$-dim manifold $M$ with positive injectivity radius and bounded geometry the
Sobolev spaces $W^k_p(M)$ with $k$ integer were first defined as
$$W^k_p(M)=\{f\in L^p(M)\,:\, \nabla_g^l f\in L^p(M),\forall 1\leq l\leq k\},$$
with  norm $\|f\|_{W^k_p(M)}=\Sigma_{l=0}^k\|\nabla_g ^l f\|_{L^p(M)}.$

Next, let $\gamma\in\mathbb R$ and $p\in(1,\infty)$.
 The fractional spaces $H^\gamma_p(M)$ with $\gamma>0$ are
$$H^\gamma_p(M)=\{f\in L^p(M) \,:\,\exists h\in L^p(M), f=(id-\Delta)^{-\gamma/2}h\},\hbox{ with norm } \|f\|_{H^\gamma_p(M)}=\|h\|_{L^p(M)}.$$
A similar definition is given also for $\gamma<0$.  If one considers the hyperbolic space as a symmetric space, the general theory on Fourier multipliers (see, for instance, \cite{Anker:multipliers}, \cite{Tataru:Strichartz-hyperbolic}) gives the following equivalence:
$$H^\gamma_p(\mathbb H^n)=\{f\in L^p(\mathbb H^n) \,: \,\|f\|_{L^p(\mathbb H^n)}+\|(-\Delta_{\mathbb H^n})^{\frac \gamma 2}f\|_{L^p(\mathbb H^n)}<\infty\}.$$
Usual Sobolev embeddings hold. In particular, for $-\infty<\gamma_1\leq \gamma_2<+\infty$ and $0<p<+\infty$,
$$H^{\gamma_1}_{p}(M)\subseteq H^{\gamma_2}_{p}(M).$$
In our notation we will drop the subindex $p$ in the $p=2$ case.

We will also work with weighted Sobolev spaces for a given weight $\omega$, that can be defined by $$W^{1,p}(M, \omega)=\{f\in L^p(M)\,:\, \omega^{\frac{1}{p}}\nabla_g^l f\in L^p(M),\forall 1\leq l\leq k\},$$
with  norm $\|f\|_{W^{k,p}(M, \omega)}=\Sigma_{l=0}^k\|\omega^{\frac{1}{p}}\nabla_g ^l f\|_{L^p(M)}.$

\subsection{The extension problem}

Let $g$ be the product metric in $\hh^n\times \rr_+$ given by $g=g_{\hh^n}+dy^2$. It was shown in \cite{Banica-Gonzalez-Saez} that:

\begin{teo}\label{thm-extension}
Let $\gamma\in(0,1)$. Given $w\in H^\gamma(\mathbb H^n)$, there exists a unique solution of the extension problem
\begin{equation}
\begin{cases}
\divergence_g(y^a\nabla_g u)(x,y) = 0 &\quad\hbox{for } (x,y)\in \hh^n\times \rr_+,\\
u(x,0)=w(x) &\quad \hbox{for } x\in \hh^n,
\end{cases}\label{extension}\end{equation}
Moreover,
\begin{equation*}(-\Delta_{\mathbb{H}^n})^\gamma w=-d_{\gamma}\lim_{y\to 0}y^a \partial_y u,\end{equation*}
for a constant
\begin{equation}\label{constant-d}d_{\gamma}=2^{2\gamma-1}\frac{\Gamma(\gamma)}{\Gamma(1-\gamma)}.\end{equation}
The solution of \eqref{extension} is given explicitly by the convolution
\begin{equation}\label{Poisson-extension}
u(x,y)=\int_{\mathbb H^n} \mathcal P^\gamma_y(\rho)w(x')\,dV_{\mathbb H^n}(x').
\end{equation}
with $\rho=d_{\hh^n}(x,x')$ the hyperbolic distance between $x$ and $x'$, and the Poisson kernel is  written as
$$\mathcal P^\gamma_y(\rho)=\int_{-\infty}^\infty k_{\lambda}(\rho)\,\varphi_\gamma\lp\left(\lambda^2+\tfrac{(n-1)^2}{4}\right)^{1/2}y\rp\,d\lambda,$$
where $k_\lambda(\rho)$ is defined by
\begin{equation*} k_\lambda(\rho)=
\left(\frac{\partial_\rho}{\sinh\rho}\right)^\frac{n-1}{2}(\cos\lambda\rho)\end{equation*}
for $n\geq 3$ odd, and for $n\geq 2$ even,
\begin{equation*}
k_\lambda(\rho)=
\int_\rho^\infty\frac{\sinh r}{\sqrt{\cosh r-\cosh\rho}}
\left(\frac{\partial_r}{\sinh r}\right)^\frac{n}{2}(\cos\lambda r)\, dr.
\end{equation*}
We moreover have the energy equality:
\begin{equation*}
\int_{\hh^n\times \rr_+}  y^a|\nabla_g u|^2 \, dV_{\mathbb H^n}(x)\, dy=d_\gamma^{-1} \int_{\hh^n}|(-\Delta_{\hh^n})^\frac\gamma2 w(x)|^2\,dV_{\mathbb H^n}(x),
\end{equation*}
\end{teo}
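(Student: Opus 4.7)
The plan is to diagonalize the extension PDE via the Helgason-Fourier transform on $\mathbb{H}^n$, reduce it to a family of modified Bessel equations in $y$, and then read off in turn the well-posedness, the Dirichlet-to-Neumann formula, the Poisson representation, and the energy identity. In the product coordinates $g = g_{\mathbb H^n} + dy^2$, the weight $y^a$ depends only on $y$, so $\divergence_g(y^a \nabla_g u) = 0$ is equivalent to $\partial_{yy} u + \frac{a}{y}\partial_y u + \Delta_{\mathbb H^n} u = 0$. Applying the Fourier transform in $x$ and using $\widehat{\Delta_{\mathbb H^n} u} = -\frac{(n-1)^2 + \lambda^2}{4}\hat{u}$, the PDE becomes, for each $(\lambda,\theta)\in \mathbb R\times \mathbb S^{n-1}$, the ODE
\begin{equation*}
\partial_{yy} \hat{u} + \frac{a}{y}\partial_y \hat{u} - \mu^2\, \hat{u} = 0, \qquad \mu := \sqrt{\lambda^2 + \tfrac{(n-1)^2}{4}}.
\end{equation*}

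The rescaling $s = \mu y$ turns this into a modified Bessel equation; the unique solution bounded at $s = \infty$ and normalized to $1$ at $s = 0$ is $\varphi_\gamma(s) := \frac{2^{1-\gamma}}{\Gamma(\gamma)} s^\gamma K_\gamma(s)$, where $K_\gamma$ is the Macdonald function. The Dirichlet boundary condition therefore forces $\hat{u}(\lambda,\theta,y) = \hat{w}(\lambda,\theta)\,\varphi_\gamma(\mu y)$, and since $|\varphi_\gamma|\leq 1$ uniformly, this defines a bounded Fourier multiplier on every $H^\sigma(\mathbb H^n)$, yielding existence and uniqueness of $u$ with trace $w\in H^\gamma(\mathbb H^n)$. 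For the Dirichlet-to-Neumann identity I would use the small-$s$ expansion $\varphi_\gamma(s) = 1 - \frac{s^{2\gamma}}{2\gamma\, d_\gamma} + O(s^2)$, a direct consequence of the classical series for $K_\gamma$ combined with $\Gamma(\gamma)\Gamma(1-\gamma) = \pi/\sin(\pi\gamma)$; this gives
\begin{equation*}
-d_\gamma \lim_{y\to 0} y^a \partial_y \hat{u}(\lambda,\theta,y) = \mu^{2\gamma}\, \hat{w}(\lambda,\theta) = \widehat{(-\Delta_{\mathbb H^n})^\gamma w}(\lambda,\theta),
\end{equation*}
and matching the exponent $s^{2\gamma}$ fixes $d_\gamma = 2^{2\gamma-1}\Gamma(\gamma)/\Gamma(1-\gamma)$.

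The Poisson formula then follows by inverting the Fourier transform: writing $u(x,y) = \int \bar{h}_{\lambda,\theta}(x)\, \hat{w}(\lambda,\theta)\, \varphi_\gamma(\mu y)\, |c(\lambda)|^{-2}\, d\theta\, d\lambda$, substituting the definition of $\hat{w}$, and carrying out the $\theta$-integration first produces the radial kernel $k_\lambda(\rho)$, which gives the stated $\calP^\gamma_y(\rho)$. The explicit closed forms of $k_\lambda(\rho)$ in odd and even dimensions come from the Harish-Chandra formula for the elementary spherical functions on $\mathbb{H}^n$, namely iterated $(\partial_\rho/\sinh\rho)$-derivatives of $\cos(\lambda\rho)$ in odd $n$, supplemented by an Abel-type integral in $r\geq \rho$ when $n$ is even. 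The energy identity finally follows by multiplying the extension equation by $u$ and integrating by parts once over $\mathbb H^n\times \mathbb R_+$: the bulk term contributes $\int y^a|\nabla_g u|^2$, while the boundary trace at $y=0$ produces $\int_{\mathbb H^n} u(-y^a\partial_y u)\big|_{y=0}\, dV_{\mathbb H^n} = d_\gamma^{-1}\int w\,(-\Delta_{\mathbb H^n})^\gamma w\, dV_{\mathbb H^n} = d_\gamma^{-1}\|(-\Delta_{\mathbb H^n})^{\gamma/2} w\|_{L^2}^2$. The main technical obstacle is the explicit evaluation of $k_\lambda(\rho)$ for both parities of $n$, where one really needs the spherical function theory on $\mathbb H^n$; this is the step carried out in detail in \cite{Banica-Gonzalez-Saez}.
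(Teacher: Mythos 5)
The paper does not actually prove this theorem: it is stated as a quotation from \cite{Banica-Gonzalez-Saez}, to which the entire argument is delegated. Your proposal correctly reconstructs the spectral-theoretic proof given in that reference: diagonalize via the Helgason--Fourier transform, solve the resulting modified Bessel ODE with $\varphi_\gamma(s)=\frac{2^{1-\gamma}}{\Gamma(\gamma)}s^\gamma K_\gamma(s)$, extract the Dirichlet-to-Neumann constant $d_\gamma$ from the small-$s$ expansion of $\varphi_\gamma$ (your computation of the $s^{2\gamma}$ coefficient via $\Gamma(\gamma)\Gamma(1-\gamma)=\pi/\sin(\pi\gamma)$ checks out), undo the transform to get the Poisson kernel, and integrate by parts for the energy identity. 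The only point I would flag as genuinely nontrivial, and which you correctly identify as deferred to the reference, is the closed-form evaluation of the spherical kernel $k_\lambda(\rho)$ (with the Plancherel density $|c(\lambda)|^{-2}$ absorbed into it) via the Harish--Chandra / Abel transform identities in odd and even dimensions; everything else in your sketch is sound.
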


\begin{teo}[Trace Sobolev embedding]\label{thm:trace-Sobolev}
For every $u\in W^{1,2}(\mathbb H^n\times\mathbb R_+, y^a)$, we have that
$$\|\nabla u\|^2_{L^2(\mathbb H^n\times\mathbb R_+, y^a)}\geq d_\gamma^{-1}\|u(\cdot,0)\|^2_{H^\gamma(\mathbb H^n)}$$
for the constant given in \eqref{constant-d}, and with equality if and only if $u$ is the Poisson extension \eqref{Poisson-extension} of some function in $H^\gamma(\mathbb H^n)$.
\end{teo}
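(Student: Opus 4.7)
The plan is to establish the inequality via the variational characterization of the Poisson extension as the minimizer of the weighted Dirichlet energy among functions with prescribed boundary trace. The strategy is an orthogonal decomposition in the weighted space $L^2(y^a\,dV_g)$, combined with the energy equality for Poisson extensions already proved in Theorem \ref{thm-extension}.

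Given $u \in W^{1,2}(\mathbb H^n\times\mathbb R_+, y^a)$, denote $w := u(\cdot, 0)$, which is well defined as a trace in an appropriate weighted boundary space since $y^a$ is an $A_2$-Muckenhoupt weight. Without loss of generality assume both sides of the desired inequality are finite, so that $w \in H^\gamma(\mathbb H^n)$, and let $v$ denote the Poisson extension of $w$ given by \eqref{Poisson-extension}. By Theorem \ref{thm-extension}, $v \in W^{1,2}(\mathbb H^n\times\mathbb R_+, y^a)$ and $v$ satisfies $\divergence_g(y^a\nabla_g v)=0$ in $\mathbb H^n\times\mathbb R_+$. The central computation is the orthogonality
$$\int_{\mathbb H^n\times\mathbb R_+} y^a \,\nabla_g v\cdot \nabla_g(u-v)\, dV_{\mathbb H^n}(x)\,dy = 0,$$
which follows from integration by parts: the interior term vanishes by the equation satisfied by $v$, while the boundary term at $\{y=0\}$ equals $-\int_{\mathbb H^n}(u-v)|_{y=0}\cdot\lim_{y\to 0}y^a\partial_y v\,dV_{\mathbb H^n}$ and is zero because $(u-v)|_{y=0}=0$. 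Contributions at infinity (both $y\to\infty$ and spatially on $\mathbb H^n$) are handled by approximating $u-v$ by smooth functions with compact support in $\mathbb H^n\times[0,R)$ still vanishing at $y=0$ and then letting $R\to\infty$, using the $W^{1,2}(y^a)$ control on $u,v$.

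The orthogonality yields the Pythagorean identity
$$\|\nabla_g u\|^2_{L^2(y^a)} = \|\nabla_g v\|^2_{L^2(y^a)} + \|\nabla_g (u-v)\|^2_{L^2(y^a)} \geq \|\nabla_g v\|^2_{L^2(y^a)},$$
and the last quantity equals $d_\gamma^{-1}\|w\|^2_{H^\gamma(\mathbb H^n)}$ by the energy equality in Theorem \ref{thm-extension}, which gives the stated inequality. Equality forces $\nabla_g(u-v)\equiv 0$ on $\mathbb H^n\times\mathbb R_+$, and combined with the zero boundary trace this gives $u\equiv v$, i.e., $u$ is the Poisson extension of its own boundary trace.

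The main obstacle is making the integration by parts step fully rigorous. The weight $y^a$ is degenerate or singular at $y=0$, the domain is non-compact in both variables, and the Poisson kernel $\mathcal P^\gamma_y(\rho)$ must be estimated carefully to control $y^a\partial_y v$ near $y=0$ and the decay of $v$, $\nabla_g v$ at spatial infinity on $\mathbb H^n$ and as $y\to\infty$. The technical content lies in choosing a good approximation by cutoff functions and invoking continuity of the trace operator $W^{1,2}(\mathbb H^n\times\mathbb R_+,y^a)\to H^\gamma(\mathbb H^n)$, which is exactly the statement one is trying to prove; this forces one to work first on a dense subclass of smooth functions with adequate decay, establish both the orthogonality and the bound there, and then extend by density to all of $W^{1,2}(\mathbb H^n\times\mathbb R_+,y^a)$.
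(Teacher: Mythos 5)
The paper itself offers no proof of Theorem~\ref{thm:trace-Sobolev}; it is quoted in Section~2 from the reference \cite{Banica-Gonzalez-Saez}, alongside Theorem~\ref{thm-extension}, and the reader is directed to that work for details. So there is no argument in this paper to compare yours against.

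On its own merits, your Dirichlet-principle proof is structurally correct: test the weak equation $\divergence_g(y^a\nabla_g v)=0$, satisfied by the Poisson extension $v$ of $w=u(\cdot,0)$, against $u-v$ (which has vanishing trace) to obtain the orthogonality and hence the Pythagorean identity, and invoke the energy equality of Theorem~\ref{thm-extension} to identify $\|\nabla_g v\|^2_{L^2(y^a)}$ with $d_\gamma^{-1}\|(-\Delta_{\HH^n})^{\gamma/2}w\|^2_{L^2(\HH^n)}$; equality then forces $u\equiv v$. The two issues you flag are indeed the ones that need care: (i) presupposing $w\in H^\gamma(\HH^n)$ and $v\in W^{1,2}(y^a)$ before the inequality is established, which is fixed by proving everything on a regular dense class and passing to the limit; and (ii) the integration by parts on the non-compact $\HH^n\times\RR_+$ with the degenerate $\mathcal A_2$ weight, which requires cutoffs together with decay estimates for $v$, $\nabla_g v$, and $y^a\partial_y v$ supplied by the explicit Poisson kernel. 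In this symmetric-space setting there is also a cleaner route worth noting, and likely the one taken in the cited source: pass to the Fourier transform on $\HH^n$ set up in Section~2, so that Plancherel decouples $\|\nabla_g u\|^2_{L^2(y^a)}$ into a fiberwise one-dimensional minimization in $y$ whose minimizer is precisely the Bessel-type profile $\varphi_\gamma$ appearing in $\mathcal P^\gamma_y$. That version produces the sharp constant $d_\gamma^{-1}$ and the equality characterization directly and sidesteps both the density bootstrap and the boundary-term estimates.
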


We remark here that our weight $\omega:=y^a$ on $\mathbb H^n\times\mathbb R_+$ is of type $\mathcal A_2$ in the Muckenhoupt sense.

\section{Preliminary results} \label{preliminary}

We define for $x_0 \in \hh^n$
\begin{equation*}\begin{split}B^+_R(x_0)=&\{(x,y)\in  \hh^n\times \rr_+: d_{\hh^n}^2(x,x_0)+|y|^2\leq R^2\},\\
\Gamma^0_R(x_0)=&\{(x,0)\in  \hh^n\times \rr_+:  d_{\hh^n}^2(x,x_0)\leq R^2\},\\
\Gamma^+_R(x_0)=&\{(x,y)\in  \hh^n\times \rr_+:  d_{\hh^n}^2(x,x_0)+|y|^2=R^2\}.\end{split}\ees
where $d_{\hh^n}$ denotes the distance in $\hh^n$.

We first describe a concept of weak solutions for problem \eqref{problem-extension}:

\begin{definition}
{\rm
Given $R>0$ and a function $h \in L^1(\Gamma^0_R)$, we say that $u$ is a
weak solution of
\begin{equation}\label{temp}
\begin{cases}
\divergence_g (y^a \nabla_g u)=0&\text{in }B_R^+,\\
-y^a u_y|_{y=0}=h&\text{on }\Gamma_R^0,
\end{cases}
\end{equation}
if
$$y^a |\nabla_g u|^2 \in L^1(B_R^+)$$
and
\begin{equation}\label{weak11}
\int_{B_R^+} y^{a} \nabla_g u \cdot \nabla_g \xi \,dV_{\mathbb H^n}(x)dy-\int_{\Gamma^0_R} h
\xi \,dV_{\mathbb H^n}(x)=0
\end{equation}
for all $\xi \in \mathcal C^1(\overline{B_R^+})$ such that $\xi\equiv 0$ on $\Gamma^+_R$.}
\end{definition}

\subsection{Regularity}\label{subsection-regularity}

The following results may be proved exactly as in \cite{Cabre-Sire:I}. Indeed, the kernel representation of the fractional Laplacian on hyperbolic space from Theorem \ref{thm:singular-integral}  allows to get regularity estimates as in \cite{Silvestre:regularity-obstacle} (see also \cite{Banica-Gonzalez-Saez}) and the structure of the equation in local coordinates allows to use the results in \cite{FKS} for degenerate elliptic equations with $\mathcal A_2$ weight.

\begin{lemma}\label{regNL}
Let $f$ be a $\mathcal C^{1,\alpha}(\RR)$ function with $\alpha >\max\{0,1-2\gamma\}$.
Then, any bounded solution of
$$(-\Delta_{\mathbb H^n})^\gamma w =f(w)\quad\mbox{ in }\HH^n$$
is $\mathcal C^{2,\beta}(\HH^n)$ for some $0 <\beta < 1$ depending only on $\alpha$ and $\gamma$.
In addition, the Poisson extension $u$ of $w$ as given in \eqref{Poisson-extension} satisfies
$$\|u\|_{\mathcal C^\beta(\overline{\mathbb H^n\times \mathbb R_+ })}+\|\nabla_{\mathbb H^n} u\|_{\mathcal C^\beta(\overline{\mathbb H^n\times \mathbb R_+ })}+\|D_{\mathbb H^n}^2 u\|_{\mathcal C^\beta(\overline{\mathbb H^n\times \mathbb R_+ })}\leq C,$$
for some constant $C$ depending only on $n$, $\gamma$, $\|f\|_{\mathcal C^{1,\alpha}}$,
and $\|w\|_{L^{\infty}(\HH^n)}$.

Moreover, if $\gamma_0>1/2$, then the these estimates are valid for all $\gamma\in(\gamma_0,1)$ where $\beta$ and $C$ may be taken depending only on $\gamma_0$ and uniform in $\gamma$.
\end{lemma}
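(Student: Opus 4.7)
The plan follows the Euclidean strategy of \cite{Cabre-Sire:I} with the necessary adaptations for the hyperbolic setting, bootstrapping the regularity of $w$ and its extension $u$ by alternating between two ingredients: (a) the singular integral representation of $(-\Delta_{\mathbb H^n})^\gamma$ from Theorem \ref{thm:singular-integral}, and (b) the Fabes--Kenig--Serapioni theory for degenerate elliptic equations with $\mathcal A_2$ weights, applied to the extension problem written in local coordinates.

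First, I would establish an initial H\"older bound on $w$. Since $w$ is bounded, the explicit asymptotics $K_\gamma(\rho)\sim \rho^{-n-2\gamma}$ as $\rho\to 0$ and $K_\gamma(\rho)\sim \rho^{-1-\gamma}e^{-(n-1)\rho}$ as $\rho\to\infty$ allow one to control the singular integral defining $(-\Delta_{\mathbb H^n})^\gamma w$ and conclude that $w\in \mathcal C^{0,\alpha_0}(\mathbb H^n)$ for some small $\alpha_0$, following \cite{Silvestre:regularity-obstacle}. Equivalently, one may argue from the extension side: $u$ solves a weighted divergence equation with weight $y^a\in \mathcal A_2$, and after a reflection across $\{y=0\}$ (legitimate since the Neumann datum $f(w)$ is bounded) the Harnack-H\"older theory of \cite{FKS} yields $u\in \mathcal C^{0,\alpha_0}$ up to the boundary, and in particular the desired H\"older bound on $w$.

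Second, with $w\in \mathcal C^{0,\alpha_0}$ the composition $f(w)$ is also H\"older. Working in a local chart on $\mathbb H^n$ (for instance the half-space model, in which $\Delta_{\mathbb H^n}$ takes the form \eqref{Laplace-Beltrami}), the extension equation becomes a weighted divergence-form elliptic equation with smooth coefficients and H\"older Neumann boundary data. The boundary Schauder theory for such weighted operators, combined with the perturbation technique of \cite{Cabre-Sire:I}, yields $\mathcal C^{1,\beta}$ regularity of $u$ up to $\{y=0\}$, with quantitative bounds depending only on $n$, $\gamma$, $\|f\|_{\mathcal C^{0,\alpha}}$ and $\|w\|_{L^\infty}$. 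I would then bootstrap to $\mathcal C^{2,\beta}$ by differentiating the equation along directions tangent to $\mathbb H^n$ (which are isometries of the product metric $g=g_{\mathbb H^n}+dy^2$, so they commute well with the degenerate operator), re-applying Schauder to the differentiated problem, and finally using $-y^a\partial_y u|_{y=0}=f(u)$ to read off H\"older control of the remaining second derivatives. The hypothesis $\alpha>1-2\gamma$ is precisely what guarantees that the threshold exponent $2\gamma+\alpha$ imposed by the degeneracy at $y=0$ exceeds $1$, so that the bootstrap closes.

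The main technical difficulty is keeping the Schauder constants \emph{uniform} as $\gamma\to 1$ (that is, as $a\to -1$), as required later in the proof of Theorem \ref{thm-limit}. This uniformity hinges on the fact that the $\mathcal A_2$ constant of the weight $y^a$ is bounded uniformly for $\gamma\in(\gamma_0,1)$ with $\gamma_0>1/2$, so that the FKS constants remain controlled; the geometric features of $\mathbb H^n$ enter only through smooth, locally bounded coefficients in the equation and therefore do not spoil this uniformity. The other delicate point is the hyperbolic geometry itself: since the metric is not translation-invariant, the usual Euclidean freezing-of-coefficients must be carried out in local normal charts, which is straightforward but must be done uniformly in the base point, using that $\mathbb H^n$ has bounded geometry.
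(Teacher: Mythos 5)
Your overall strategy is the same as the paper's: the paper proves this lemma simply by invoking \cite{Cabre-Sire:I}, noting that the kernel representation of Theorem~\ref{thm:singular-integral} allows Silvestre-type estimates and that the local form of the extension equation allows \cite{FKS}; you flesh out that citation into a two-step bootstrap (initial H\"older bound on $w$, then weighted boundary Schauder and tangential differentiation to reach $\mathcal C^{2,\beta}$). This is the intended argument and the structure is sound.

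There is, however, one concrete error in your explanation of the final uniformity claim. You write that the uniformity of the constants for $\gamma\in(\gamma_0,1)$ ``hinges on the fact that the $\mathcal A_2$ constant of the weight $y^a$ is bounded uniformly for $\gamma\in(\gamma_0,1)$ with $\gamma_0>1/2$.'' This is false: for $\omega(y)=|y|^a$ on the line, testing on intervals centered at the origin gives an $\mathcal A_2$ constant exactly $\frac{1}{1-a^2}$, which blows up as $a=1-2\gamma\to-1^-$, i.e.\ as $\gamma\uparrow 1$. So the FKS constants are \emph{not} uniformly controlled on $(\gamma_0,1)$, and the uniformity in the lemma cannot come from that route. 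In \cite{Cabre-Sire:I} the uniformity for $\gamma\ge\gamma_0>1/2$ is instead traced through the Poisson-kernel / Silvestre-type estimates, whose constants can be followed explicitly and remain bounded up to $\gamma=1$ (where they degenerate into classical Schauder). The role of $\gamma_0>1/2$ is that $a<0$ there, which both makes $\alpha>\max\{0,1-2\gamma\}$ a $\gamma$-independent condition and keeps the relevant kernel estimates nondegenerate; you should replace your $\mathcal A_2$ explanation with this mechanism. The rest of your sketch -- in particular your reading of the hypothesis $\alpha>1-2\gamma$ as the condition that closes the bootstrap, and your observation that bounded geometry of $\HH^n$ lets you freeze coefficients uniformly in the base point -- is consistent with the paper.
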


\begin{lemma}\label{regularity1} Let $R>0$. Let $h \in \mathcal C^\alpha (\Gamma^0_{2R})$ for some $\alpha \in (0,1)$ and $u \in
L^\infty(B^+_{2R}) \cap W^{1,2}(B^+_{2R},y^a)$ be a weak solution of
\begin{equation*} \label{problemBR} \begin{cases}
\divergence_g (y^a \nabla_g u)=0&\text{ in } B^+_{2R},\\
-y^a\partial_y u|_{y=0} =h&\text{ on }
\Gamma^0_{2R}. \end{cases} \end{equation*}
Then, there exists  $\beta \in (0,1)$ depending only on $n$, $a$, and $\alpha$, such that $u \in
\mathcal C^\beta(\overline{B_R^+})$ and $y^a u_y \in \mathcal C^\beta(\overline{B_R^+})$.

Furthermore, there exist constants $C^1_R$ and $C^2_R$ depending only on $n$, $a$, $R$, $\|u\|_{L^\infty(B_{2R}^+)}$
and also on $\|h \|_{L^\infty(\Gamma^0_{2R})}$ (for $C^1_R$) and $\|h \|_{\mathcal C^\sigma(\Gamma^0_{2R})}$ (for
$C^2_R)$, such that \begin{equation*}
 \|u\|_{\mathcal C^\beta(\overline{B_R^+})} \leq C^1_R
\end{equation*} and \begin{equation*}
 \|y^a u_y\|_{\mathcal C^\beta(\overline{B_R^+})} \leq C^2_R.
\end{equation*}
\end{lemma}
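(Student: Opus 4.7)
The plan is to reduce the problem to the theory of degenerate elliptic equations with $\mathcal A_2$ weights in local Euclidean coordinates, following the strategy of \cite{Cabre-Sire:I} on the Euclidean side. Working in Fermi-type normal coordinates $x=(x^1,\dots,x^n)$ around $x_0$ on a chart that covers the ball $B_{2R}^+(x_0)$, the hyperbolic metric components $g_{ij}(x)$ are smooth, symmetric, and uniformly elliptic with bounds depending only on $n$ and $R$. In these coordinates the extension equation becomes
$$\del_i\bigl(y^a\sqrt{g}(x)\,g^{ij}(x)\,\del_j u\bigr)+\del_y\bigl(y^a\sqrt{g}(x)\,\del_y u\bigr)=0,$$
a divergence-form degenerate elliptic equation in $(x,y)$ whose weight $y^a$ is $\mathcal A_2$ in the Muckenhoupt sense, since $a\in(-1,1)$.

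For the $\mathcal C^\beta$ estimate on $u$, I would first treat the homogeneous case $h\equiv 0$: extending $u$ to $\{y<0\}$ by even reflection produces a weak solution of the corresponding equation with weight $|y|^a$ across $\{y=0\}$, and the interior Hölder estimate of Fabes--Kenig--Serapioni \cite{FKS} applies directly to yield a $\mathcal C^\beta$ bound in terms of $\|u\|_{L^\infty}$. For general $h$, I would write $u=v+w$, where $w$ is a local particular solution with prescribed conormal datum $h$, constructed by a Poisson-type convolution and controlled in $\mathcal C^\beta$ by $\|h\|_{\mathcal C^\alpha}$ using the kernel estimates from \cite{Banica-Gonzalez-Saez} together with the regularity arguments of \cite{Silvestre:regularity-obstacle}. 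The difference $v=u-w$ then solves the homogeneous Neumann problem with bounded $L^\infty$ norm, so the reflection/$\mathcal A_2$-regularity step yields $v\in\mathcal C^\beta(\overline{B_R^+})$, and adding back $w$ produces $C^1_R$.

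For the estimate on $y^a u_y$, I would use the standard \emph{conjugate-equation} trick: the function $\tilde v:=y^a u_y$ satisfies a divergence-form equation with the conjugate weight $y^{-a}$ (which is also $\mathcal A_2$, since $-a\in(-1,1)$), while the boundary condition becomes the Dirichlet datum $\tilde v|_{y=0}=-h\in\mathcal C^\alpha$. Boundary Hölder continuity for such degenerate Dirichlet problems with Hölder data is again a direct consequence of \cite{FKS}, yielding the bound $C^2_R$ in terms of $\|h\|_{\mathcal C^\alpha}$. The main obstacle is essentially organizational: one must verify that the coefficients induced by $g_{\mathbb H^n}$ in the chosen coordinate chart fit the hypotheses of \cite{FKS} uniformly, and that the decomposition $u=v+w$ can be localized without creating new boundary terms. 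Since $\sqrt{g}$ and $g^{ij}$ are smooth with controlled upper and lower bounds on $B_{2R}^+(x_0)$, these checks are routine and introduce no ingredient beyond the Euclidean argument of \cite{Cabre-Sire:I}.
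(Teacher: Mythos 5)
Your proposal follows the same strategy the paper indicates (the paper itself only sketches: ``may be proved exactly as in Cabr\'e--Sire,'' invoking the kernel representation for Silvestre-type estimates and the local $\mathcal A_2$-weighted structure for Fabes--Kenig--Serapioni). You spell out exactly those ingredients: localization to a chart where $g_{\HH^n}$ has smooth, uniformly elliptic components; even reflection across $\{y=0\}$ to apply the FKS interior H\"older estimate to the equation with weight $|y|^a$; and the conjugate-equation observation that $\tilde v=y^a u_y$ solves the divergence-form equation with weight $y^{-a}$ (this does hold in the hyperbolic case precisely because the metric on $\HH^n\times\RR_+$ is a product, so $\sqrt g$ is $y$-independent and the computation goes through unchanged), with Dirichlet datum $-h$. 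So the route is the intended one.

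One quantitative point deserves a fix. For the first bound the lemma asserts that $C^1_R$ depends only on $\|h\|_{L^\infty(\Gamma^0_{2R})}$, whereas your decomposition $u=v+w$ controls the particular solution $w$ in $\mathcal C^\beta$ by $\|h\|_{\mathcal C^\alpha}$, which would propagate the stronger dependence into $C^1_R$. To recover the stated $L^\infty$-only dependence you should either (i) construct $w$ by convolving $h$ against the relevant Neumann (Riesz-type) kernel and use that this convolution maps $L^\infty$ into $\mathcal C^{\beta}$ for some $\beta=\beta(a)$ (the manifold analogue of $(-\Delta)^{-\gamma}:L^\infty\to\mathcal C^{2\gamma}$, available via the kernel estimates of Banica--Gonz\'alez--S\'aez), so $\|w\|_{\mathcal C^\beta}\lesssim\|h\|_{L^\infty}$; or (ii) forgo the decomposition and apply the De Giorgi--Nash--Moser oscillation-decay argument from FKS directly to $u$, treating the bounded conormal datum as a right-hand side in the Caccioppoli inequality. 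Either way the argument closes; it is only the statement ``controlled by $\|h\|_{\mathcal C^\alpha}$'' that should be sharpened to $\|h\|_{L^\infty}$. Everything else, including the observation that $-a\in(-1,1)$ remains $\mathcal A_2$ and that the local coefficients fit the FKS hypotheses uniformly on $B_{2R}^+$, is correct.
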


\subsection{Maximum principles}\label{subsection:maximum}

\begin{rema}\label{MPweak}
{\rm
The (weak) maximum principle holds for weak solutions of \eqref{temp}.
More generally, if $u$ solves
\begin{equation*}
\begin{cases}
-\divergence_g (y^a \nabla_g u)\geq 0&\text{in $B_R^+$},\\
-y^a u_y\geq 0 &\text{on $\Gamma_R^0$},\\
u\geq 0 &\text{on $\Gamma_R^+$,}
\end{cases}
\end{equation*}
in the weak sense, then $u\geq 0$ in $B_R^+$. This is proved simply inserting the negative part $u^-$ of $u$ in the weak formulation \eqref{weak11}.

In addition, one has
the strong maximum principle: either $u\equiv 0$ or
$u>0$ in $ B_R^+\cup \Gamma_R^0$. That $u$ cannot vanish at an
interior point follows from the classical strong maximum principle
for strictly elliptic operators. That $u$ cannot vanish at a
point in $\Gamma_R^0$ follows from the Hopf principle
that we establish below (see Lemma~\ref{lemma-hopf}) or
by the strong maximum principle of \cite{FKS}.}
\end{rema}

\begin{lemma}\label{lemma-hopf}
Consider the cylinder
$C_{R,1}= \Gamma_R^0 \times (0,1) \subset \HH^{n}\times \RR^+ $ where $\Gamma_R^0$
is the ball of center $o$ and radius $R$ in $\HH^n$. Let $u \in \mathcal C(\overline{C_{R,1}})
\cap W^{1,2}(C_{R,1},y^a)$ satisfy
\begin{equation*}
\begin{cases}
 -\divergence_g (y^a \nabla_g u) \leq 0&\text{ in } C_{R,1}, \\
u> 0&\text{ in } C_{R,1}, \\
u(o,0)=0.&
\end{cases}
\end{equation*}
Then,
$$\limsup_{y \rightarrow 0^+ }-y^a \frac{u(o,y)}{y}<0.$$
In addition, if $y^a u_y \in \mathcal C(\overline{C_{R,1}})$, then
$$-\lim_{y\to 0} y^a  \partial_y u (o,0) <0. $$
\end{lemma}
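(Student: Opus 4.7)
The plan is to construct an explicit sub-solution $\psi$ in a small half-cylinder around $(o,0)$ that vanishes at that point but grows like $y^{1-a}$ along $\{x=o\}$, compare it with $u$ from below by means of the weak maximum principle of Remark \ref{MPweak}, and read off the $\limsup$ estimate by evaluating at $x=o$.

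I would fix $r_0\in(0,R)$ and $h_0\in(0,1)$ small, work in
$$\Omega:=\{x\in\HH^n:d_{\HH^n}(x,o)<r_0\}\times(0,h_0),$$
and first note that by strict positivity of $u$ on the open set $C_{R,1}$ and continuity of $u$, there exists $m>0$ with $u(x,h_0)\geq m$ for all $d_{\HH^n}(x,o)\leq r_0$. The barrier itself is the explicit ansatz
$$\psi(x,y):=y^{1-a}\bigl(1-\beta\,d_{\HH^n}(x,o)^2\bigr)+\kappa\,y^{2-a},$$
with positive constants $\beta,\kappa$ to be chosen. Using $\Delta_{\HH^n}\bigl(d_{\HH^n}(\cdot,o)^2\bigr)=2+2(n-1)\,d\coth d$ (with $d=d_{\HH^n}(\cdot,o)$), a direct computation in the product metric of $\hh^n\times\rr_+$ gives
$$-\divergence_g(y^a\nabla_g\psi)=\beta\,y\bigl[2+2(n-1)\,d\coth d\bigr]-\kappa(2-a).$$
Selecting $\beta>r_0^{-2}$ and $\kappa$ in the window
$$\frac{\beta h_0\,[2+2(n-1)\,r_0\coth r_0]}{2-a}\;\leq\;\kappa\;\leq\;\frac{\beta r_0^2-1}{h_0},$$
which is non-empty once $h_0$ is small enough, simultaneously makes $\psi$ a weak sub-solution in $\Omega$ \emph{and} forces $\psi\leq 0$ on the lateral boundary $\{d=r_0\}\times[0,h_0]$. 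Moreover, $\psi\equiv 0$ on $\{y=0\}\cap\overline{\Omega}$ and $\psi$ is bounded above by some $M>0$ on the top $\{y=h_0\}\cap\overline{\Omega}$.

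The weak maximum principle of Remark \ref{MPweak}, applied to $v:=u-\varepsilon\psi$ for $\varepsilon\leq m/M$, then yields $v\geq 0$ in $\Omega$: on $\partial\Omega$ we have $v\geq 0$ because $\psi\leq 0$ on the lateral and flat pieces, while $u(x,h_0)\geq m\geq \varepsilon M\geq\varepsilon\psi(x,h_0)$ on the top; in the interior, the sub-solution property of $\psi$ combined with the differential inequality for $u$ ensures $-\divergence_g(y^a\nabla_g v)$ has the correct sign weakly. Evaluating at $x=o$ gives $u(o,y)\geq\varepsilon\,y^{1-a}(1+\kappa y)$ for $0<y<h_0$, whence
$$-y^a\,\frac{u(o,y)}{y}\;\leq\;-\varepsilon(1+\kappa y)\;\leq\;-\varepsilon/2$$
for all sufficiently small $y>0$, proving the $\limsup$ claim. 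For the second assertion, the additional hypothesis $y^a u_y\in\mathcal{C}(\overline{C_{R,1}})$ combined with $u_y=y^{-a}\cdot(y^a u_y)$ and the integrability of $y^{-a}$ near $0$ (since $a<1$) lets us identify, via L'H\^opital, the $\limsup$ above with $-(1-a)^{-1}\lim_{y\to 0^+}y^a u_y(o,y)$; the first assertion then forces $-\lim_{y\to 0^+}y^a\partial_y u(o,0)<0$.

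The main obstacle is the simultaneous realization of the sub-solution condition and the lateral vanishing condition, giving the two-sided bound on $\kappa$ above; compatibility is recovered only for $h_0\ll r_0$, and the term $d\coth d$ coming from $\Delta_{\HH^n}(d^2)$ (absent in the Euclidean setting of \cite{Cabre-Sire:I}) is the point where the hyperbolic geometry enters nontrivially, forcing the smallness of $h_0$ to depend on the curvature through $r_0\coth r_0$.
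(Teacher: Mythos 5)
Your argument is the standard Hopf barrier construction and is, in substance, the proof that the paper delegates to Cabr\'e--Sire (Lemma 4.7 of \cite{Cabre-Sire:I}): a sub-solution of the form $y^{1-a}(1-\beta d^2)+\kappa y^{2-a}$ that vanishes along $\{y=0\}$, is non-positive on the lateral boundary, and compares from below with $u$ by the weak maximum principle. The only genuinely hyperbolic ingredient is $\Delta_{\HH^n}(d^2)=2+2(n-1)d\coth d$, and you correctly observe that, since $d\coth d$ is bounded on compact sets, this just forces the constraint window on $\kappa$ to be realized by taking $h_0$ small relative to $r_0$. The rest (positivity of $u$ on the compact top giving the scale $\varepsilon\leq m/M$, and the elementary L'H\^opital reduction of the second claim to the first once $y^a u_y$ is continuous) is sound.

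One sign deserves to be flagged rather than glossed over. As printed, the lemma assumes $-\divergence_g(y^a\nabla_g u)\leq 0$. With that sign the statement is false: $u=y^{2-a}$ has $-\divergence_g(y^a\nabla_g u)=-(2-a)<0$, $u>0$ for $y>0$, $u(o,0)=0$, yet $y^a u(o,y)/y=y\to 0$. Your comparison $v=u-\varepsilon\psi\geq 0$ requires $-\divergence_g(y^a\nabla_g v)\geq 0$, which, given that $\psi$ is a \emph{sub}-solution, forces $u$ to be a \emph{super}-solution, i.e.\ $-\divergence_g(y^a\nabla_g u)\geq 0$. This is indeed the hypothesis intended (it matches both the Cabr\'e--Sire statement and the application of the Hopf lemma inside Remark~\ref{MPweak}, where $u$ satisfies $-\divergence_g(y^a\nabla_g u)\geq 0$). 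So your proof silently corrects a typo; you should say so, because with the sign as written the phrase ``the differential inequality for $u$ ensures $-\divergence_g(y^a\nabla_g v)$ has the correct sign'' does not hold. A minor second point: Remark~\ref{MPweak} is stated with a Neumann condition on $\Gamma^0_R$, whereas you invoke it with Dirichlet data on all of $\partial\Omega$ (using $\psi=0$ and $u\geq 0$ on $\{y=0\}$); you should either cite the FKS interior maximum principle directly or note that the $v^-$ test-function argument works verbatim when $v^-$ vanishes on the entire boundary.
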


We remark here that this version of the Hopf's lemma will work for any product manifold $M^n\times \mathbb R_+$ with the product metric $g=g_{\mathbb H^n}+dy^2$, and the proof is exactly the same as in the flat case (see \cite{Cabre-Sire:I}). For more general manifolds with boundary we refer to \cite{Chang-Gonzalez,Gonzalez-Qing}.

\begin{lemma}\label{lemma-max}
Fix $\varepsilon >0$. Let $d$ be a H\"older continuous function in
$\Gamma^0_\varepsilon$ and $u \in L^\infty(B^+_\varepsilon) \cap W^{1,2}(B^+_\varepsilon,y^a)$
be a weak solution of
\begin{equation*}
\begin{cases}
\divergence_g (y^a \nabla_g u)=0 &\text{ in } B^+_\varepsilon, \\
u \geq  0&\text{ in }  B^+_\varepsilon,\\
- y^a \partial_y u+d(x) u|_{y=0}=0&\text{ on } \Gamma^0_\varepsilon.
\end{cases}
\end{equation*}
Then, $u >0$ in $B^+_\varepsilon \cup \Gamma^0_\varepsilon$ unless $u \equiv 0$ in $B^+_\varepsilon. $
\end{lemma}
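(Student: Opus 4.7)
The plan is to reduce the statement to a direct application of the Hopf lemma at a hypothetical zero of $u$ on $\Gamma^0_\varepsilon$. Assume $u \not\equiv 0$. Then the standard strong maximum principle for degenerate elliptic operators with $\calA_2$ weight (Remark~\ref{MPweak}, which ultimately rests on the Harnack inequality of Fabes--Kenig--Serapioni applied to $\divergence_g(y^a \nabla_g \cdot)$) immediately gives $u > 0$ at every interior point of $B^+_\varepsilon$. It therefore suffices to exclude zeros on the boundary face $\Gamma^0_\varepsilon$.

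Suppose, by contradiction, that $u(x_0,0)=0$ for some $x_0 \in \Gamma^0_\varepsilon$. Choose a small cylindrical neighborhood of $(x_0,0)$ of the form $C = \Gamma^0_{R}(x_0)\times(0,\eta)$ with $\overline{C}\subset B^+_\varepsilon$; since $\mathbb H^n$ is homogeneous under its isometry group, a translation reduces the geometry of $C$ to that of the cylinder $C_{R,1}$ appearing in Lemma~\ref{lemma-hopf} (the particular height is immaterial, as the proof of that lemma adapts trivially). Inside $C$ we have $u > 0$ by the interior strong maximum principle, $u(x_0,0)=0$ by assumption, and $-\divergence_g(y^a \nabla_g u) = 0 \leq 0$ weakly. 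To invoke the sharp form of the Hopf lemma I need $y^a u_y$ to be continuous up to $\{y=0\}$; this is precisely the content of Lemma~\ref{regularity1} applied with Neumann datum $h(x) := -d(x)u(x,0)$, which is H\"older continuous since $d$ is H\"older on $\Gamma^0_\varepsilon$ and $u(\cdot,0)$ inherits H\"older regularity from the same lemma.

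With the regularity secured, Lemma~\ref{lemma-hopf} yields
\[
-\lim_{y\to 0} y^a \partial_y u(x_0,y) \;<\; 0.
\]
On the other hand, passing to the limit $y\to 0$ in the Robin-type boundary condition and evaluating at $x_0$ gives
\[
-\lim_{y\to 0} y^a \partial_y u(x_0,y) \;=\; -d(x_0)\,u(x_0,0) \;=\; 0,
\]
which is a direct contradiction. Hence no such $x_0$ exists and $u > 0$ throughout $B^+_\varepsilon \cup \Gamma^0_\varepsilon$. The only non-mechanical step is verifying the up-to-the-boundary continuity of $y^a u_y$ that underpins the Hopf inequality; once that is in hand via Lemma~\ref{regularity1}, the rest is a one-line contradiction from the boundary condition.
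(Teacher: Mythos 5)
Your proof is correct and follows exactly the standard two-step argument that the paper itself delegates to \cite{Cabre-Sire:I} and hints at in Remark~\ref{MPweak}: the FKS strong maximum principle rules out interior zeros, and Lemma~\ref{lemma-hopf} rules out zeros on $\Gamma^0_\varepsilon$ by contradiction with the Robin condition, which forces $-\lim_{y\to 0}y^a\partial_y u(x_0,y)=-d(x_0)u(x_0,0)=0$ at a boundary zero while Hopf demands this limit be strictly negative. The only point worth spelling out a little more carefully is the bootstrap that justifies $y^a u_y \in \mathcal C(\overline{B^+})$: one first applies Lemma~\ref{regularity1} using only $h:=-d(\cdot)u(\cdot,0)\in L^\infty$ to obtain $u\in\mathcal C^\beta$ up to the boundary (the estimate $C^1_R$ depends only on $\|h\|_{L^\infty}$), which then makes $h$ H\"older so that the second estimate of Lemma~\ref{regularity1} applies and gives $y^a u_y\in\mathcal C^\beta$ — as it stands your sentence slightly conflates the hypothesis and the conclusion of the regularity lemma, but the argument is sound.
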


\begin{lemma}
\label{lemma-max1}
Let $u\in (\mathcal C\cap L^\infty)(\overline{\HH^{n}\times \RR^+})$ with $y^a u_y \in \mathcal C(\overline{\HH^{n}\times \RR^+})$
satisfy
\begin{equation*} \label{maxpr}
\begin{cases}
 \divergence_g (y^a \nabla_g u)= 0&\text{ in } \HH^{n}\times \R^+,\\
 - y^a\partial_y u+d(x)u|_{y=0}\ge 0&\text{ on } \HH^{n},
\end{cases}
\end{equation*}
where $d$ is a bounded function,
and also
\begin{equation*}
\label{lim0}
u(x,0)\rightarrow 0 \qquad \mbox{as } x\to\partial_\infty {\hh^n}.
\end{equation*}
Assume that there exists a nonempty set $\Sigma\subset\HH^n$ such that
$u(x,0)> 0$ for $x\in \Sigma$, and $d(x)\ge 0$ for $x\not\in \Sigma$.

Then, $u>0$ in $\overline{\HH^{n}\times \RR^+}$.
\end{lemma}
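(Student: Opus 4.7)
The plan is a proof by contradiction modeled on the Euclidean argument of \cite{Cabre-SolaMorales,Cabre-Sire:I}, using isometry invariance of $\hh^n$ in place of Euclidean translations. Suppose $m:=\inf_{\overline{\hh^n\times\R^+}}u<0$; the goal is to derive a contradiction, so that $u\ge 0$. Strict positivity will then follow from a second application of Hopf.

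The first step is to show that $m$ is attained at some point $(x_0,y_0)\in\overline{\hh^n\times\R^+}$. The local $\mathcal C^\beta$ estimates of Lemma \ref{regularity1}, made uniform in $x$ by isometry invariance of the equation, give $u(x,y)\to u(x,0)$ uniformly in $x$ as $y\to 0^+$; combined with $u(x,0)\to 0$ at $\partial_\infty\hh^n$, this forces $u(x,y)\to 0$ uniformly on compact $y$-intervals as $x\to\partial_\infty\hh^n$. A minimizing sequence therefore cannot escape to $\partial_\infty\hh^n$ at bounded $y$. For the case $y_k\to\infty$, one argues via the Poisson representation of Theorem \ref{thm-extension}: a bounded solution of the extension equation is uniquely determined by its trace (subtracting the Poisson extension, the only bounded solution of the extension problem with zero trace is $0$, since the $y$-dependent solutions of $H_a$ are spanned by $1$ and $y^{2\gamma}$), and the kernel $\mathcal P_y^\gamma$ spreads out so that $u(x,y)\to 0$ as $y\to\infty$. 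Once escape at infinity is ruled out, interior regularity yields a convergent subsequence, hence a minimizer $(x_0,y_0)$.

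If $y_0>0$, then $\divergence_g(y^a\nabla_g u)=0$ is classically uniformly elliptic near $(x_0,y_0)$, and the strong maximum principle gives $u\equiv m$ throughout the connected interior $\hh^n\times(0,\infty)$. By continuity $u(\cdot,0)\equiv m<0$, contradicting $u(x,0)\to 0$. Hence $y_0=0$ and $u(x_0,0)=m<0$. Since $u>0$ on $\Sigma$, necessarily $x_0\notin\Sigma$ and so $d(x_0)\ge 0$. The hypothesis $-y^a\partial_y u+d(x)u|_{y=0}\ge 0$ at $x_0$ gives
\[
-\lim_{y\to 0^+}y^a\partial_y u(x_0,y)\;\ge\;-d(x_0)\,u(x_0,0)\;\ge\;0,
\]
using $d(x_0)\ge 0$ and $u(x_0,0)<0$. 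On the other hand, the previous step also showed $u>m$ strictly in the interior, so $u-m>0$ in a cylinder $C_{R,1}$ about $(x_0,0)$ with $(u-m)(x_0,0)=0$. Lemma \ref{lemma-hopf} then gives $-\lim_{y\to 0^+}y^a\partial_y(u-m)(x_0,0)<0$, i.e. $\lim y^a\partial_y u(x_0,0)>0$, contradicting the displayed inequality.

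This shows $u\ge 0$. Since $\Sigma\ne\emptyset$, $u\not\equiv 0$, and the classical interior strong maximum principle yields $u>0$ in $\hh^n\times(0,\infty)$. If $u$ vanished at some point $(x_1,0)$, the hypothesis and $u(x_1,0)=0$ would force $-\lim y^a\partial_y u(x_1,0)\ge 0$, while Hopf applied in a cylinder around $(x_1,0)$ (using $u>0$ in its interior) yields $-\lim y^a\partial_y u(x_1,0)<0$: contradiction. Therefore $u>0$ on $\hh^n\times\{0\}$ as well. The main obstacle is the first step, specifically ruling out minimizing sequences escaping as $y\to\infty$, which seems to require either the explicit Poisson representation of Theorem \ref{thm-extension} or a barrier built from the one-dimensional solutions $1$ and $y^{2\gamma}$ of $H_a$.
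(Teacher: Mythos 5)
The paper records Lemma~\ref{lemma-max1} in the ``Preliminary results'' section without supplying a proof of its own; like the regularity lemmas it is carried over from the Euclidean setting of \cite{Cabre-Sire:I,Cabre-SolaMorales}, with isometries of $\hh^n$ replacing translations. Your reconstruction follows exactly that template, so the comparison is with the intended argument rather than with a written one.

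Your overall structure is the correct one: assume $m:=\inf u<0$, show $m$ is attained, rule out an interior minimum via the classical strong maximum principle (which forces $u\equiv m$ and hence $u(\cdot,0)\equiv m<0$, contradicting the decay of the trace), and reach a contradiction with Lemma~\ref{lemma-hopf} at a boundary minimum. The boundary step is done correctly: since $u(x_0,0)=m<0$ and $u>0$ on $\Sigma$, necessarily $x_0\notin\Sigma$, so $d(x_0)\ge 0$ and the oblique inequality gives $-\lim_{y\to 0^+}y^a\partial_y u(x_0,0)\ge -d(x_0)m\ge 0$; meanwhile $u-m>0$ in the open cylinder, $(u-m)(x_0,0)=0$, and Hopf gives $-\lim_{y\to 0^+}y^a\partial_y u(x_0,0)<0$. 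The concluding strict positivity, by a second interior/Hopf argument, is also fine.

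There are, however, two real gaps in the step ``the infimum is attained at a finite boundary point,'' and you have correctly located the larger one. First, the claim that $u(x,y)\to 0$ uniformly on compact $y$-intervals as $x\to\partial_\infty\hh^n$ is only justified near $y=0$ by the H\"older estimate $|u(x,y)-u(x,0)|\lesssim y^\beta$; for $y\in[\delta,T]$ you need a separate argument (e.g.\ translate by isometries $i_k$ with $i_k(o)\to\partial_\infty$, extract a locally uniform limit $\tilde u$, observe $\tilde u\equiv 0$ on $\hh^n\times[0,\delta]$ by the first part, and then invoke unique continuation for the uniformly elliptic interior equation to get $\tilde u\equiv 0$ on $\hh^n\times(0,T]$). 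Second, and more importantly, ruling out $y_k\to\infty$ is indeed the crux, and the Poisson/Liouville route you sketch is not yet a proof. The parenthetical ``the $y$-dependent solutions of $H_a$ are spanned by $1$ and $y^{2\gamma}$'' only treats the zero spectral mode of $-\Delta_{\hh^n}$; it does not by itself show that the only \emph{bounded} solution of $\divergence_g(y^a\nabla_g v)=0$ in $\hh^n\times\RR_+$ with $v(\cdot,0)=0$ is $v\equiv 0$. Such a Liouville statement is genuinely delicate on $\hh^n$: the exponential volume growth defeats the naive Caccioppoli/Harnack-iteration proof, and $\hh^n$ carries bounded nonconstant harmonic functions, so one has to use the zero trace together with the fibre structure in $y$ (for example, via the spectral representation of Theorem~\ref{thm-extension} and the fact that, for $\lambda\ge\frac{(n-1)^2}{4}>0$, the Bessel-type solution of $g''+\frac{a}{y}g'-\lambda g=0$ vanishing at $y=0$ grows exponentially in $y$). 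Alternatively, one can simply add to the lemma the hypothesis, satisfied in every application in this paper, that $u$ is the Poisson extension \eqref{Poisson-extension} of its trace, which makes the $y\to\infty$ decay immediate. As written, however, that step remains an assertion rather than a proof.
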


\section{A necessary condition}

The extension problem \equ{extension} is variational. Indeed, the associated energy in a bounded Lipschitz domain $\Omega\subset \hh^n\times \rr_+$ is given by
\begin{equation} E_\Omega(u)=\int_{\Omega}y^a\frac{|\nabla_g u|^2}{2}(x,y)\,dV_{\hh^n}(x) dy+\int_{\partial\Omega\cap \{y=0\}} F(u)(x,0)\,dV_{\hh^n}(x).\label{energy1}\end{equation}
We say that $u$ is a \emph{local minimizer} of \eqref{problem-extension} with respect to relative perturbations in $[-1,1]$ if
$$E_{B_R^+}(u)\leq E_{B_R^+}(u+\psi)$$
for every $R>0$ and for every $\mathcal C^1$ function $\psi$ on $\overline{\mathbb H^n\times \mathbb R^+}$ with compact support in $B_R^+\cup \Gamma_R^0$ and such that $-1\leq u+\psi\leq 1$ in $B_R^+$.

We start with a necessary condition that, for the Euclidean case, is contained in Proposition~5.2 of \cite{Cabre-Sire:II}. Let $\Pi$ be a totally geodesic hyperplane in $\mathbb H^n$, and use coordinates $t>0$, $z\in\mathbb H^{n-1}$, where $t$ is the signed distance from $\Pi$.

\begin{prop}\label{sameheight}
 Let $u$ be a solution of \equ{problem-extension} such that $|u|<1$, and \begin{equation*}
\label{limitsL2} \lim_{t\to\pm\infty}u(t,z,0)=L^\pm  \quad\text{for every } z\in \hh^{n-1},
\end{equation*} for some constants $L^-$ and $L^+$ $($that could be equal$)$. Assume that $u$ is a local minimizer
relative to perturbations in $[-1,1]$. Then, \begin{equation*}\label{Gge} F\ge F(L^-) = F(L^+)\quad \text{ in } [-1,1].
\end{equation*}
\end{prop}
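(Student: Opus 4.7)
The plan is to argue by contradiction, adapting the comparison strategy of Proposition~5.2 in \cite{Cabre-Sire:II} to the warped product geometry of $\hh^n$. First I would reduce the whole statement to the one-sided claim: $F(s)\geq F(L^+)$ for every $s\in[-1,1]$. Running the same argument using $u(\cdot,0)\to L^-$ as $t\to-\infty$ gives $F(s)\geq F(L^-)$ for every $s\in[-1,1]$. Taking $s=L^-$ in the first and $s=L^+$ in the second forces $F(L^-)=F(L^+)$, so the full conclusion follows from the one-sided statement.

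To prove $F(s)\geq F(L^+)$, suppose by contradiction that there exists $s_0\in[-1,1]$ with $F(s_0)<F(L^+)$. Choose parameters $T\gg1$, $M>1$, a geodesic ball $K\subset\hh^{n-1}$, and smooth cutoffs $\phi_1(t),\phi_2(z),\eta(y)$ with $\phi_1\equiv 1$ on $[T+1,T+M-1]$ and supported in $[T,T+M]$; $\phi_2\equiv 1$ on a slightly smaller ball than $K$ and supported in $K$; $\eta(0)=1$ and $\mathrm{supp}\,\eta\subset[0,1]$. Set $\phi=\phi_1(t)\phi_2(z)\eta(y)$ and define the competitor
\[
v(x,y)\,=\,u(x,y)+\phi(t,z,y)\bigl(s_0-u(x,0)\bigr),
\]
extended as $v=u$ outside $\mathrm{supp}\,\phi$. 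Since $\phi\in[0,1]$ and $|u|,|s_0|\leq 1$, $v(\cdot,0)$ is a convex combination of $u(\cdot,0)$ and $s_0$, so $|v|\leq 1$ and $v$ is an admissible perturbation. Using the pointwise limit $u(\cdot,0)\to L^+$ together with the uniform H\"older regularity of Lemma~\ref{regNL}, I would upgrade the convergence to be uniform on sets of the form $[T,T+M]\times K$, so that on the support of $\phi$, $u(x,0)\approx L^+$ up to an error $\varepsilon(T)\to0$.

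Next, expand
\[
E_{B_R^+}(v)-E_{B_R^+}(u)=\tfrac12\int y^a|\nabla(v-u)|^2\,dV+\int_{y=0}\bigl(F(v)-F(u)-F'(u)(v-u)\bigr)\,dV_{\hh^n}+\int_{y=0}F'(u)(v-u)\,dV_{\hh^n},
\]
where the last term is cancelled by the cross term $\int y^a\nabla u\cdot\nabla(v-u)$ via the Euler--Lagrange equation satisfied by $u$. The potential part contributes a negative leading term of order $(F(s_0)-F(L^+))|K|\int_{T+1}^{T+M-1}\cosh^{n-1}(t)\,dt$. The bulk cost is controlled by $\int y^a|\nabla\phi|^2\,dV$ plus lower-order terms; the three contributions from $\partial_t\phi_1$, $\partial_z\phi_2$, $\partial_y\eta$ are estimated separately using the metric $g=dt^2+\cosh^2 t\,g_{\hh^{n-1}}+dy^2$. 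Crucially, $|\partial_z\phi_2|_g^2=\cosh^{-2}(t)|\partial_z\phi_2|^2_{g_{\hh^{n-1}}}$, so the $z$-cutoff contribution carries an extra factor $\cosh^{-2}(t)$ and becomes subleading compared to the bulk volume $\int\cosh^{n-1}(t)\,dt$ as $T\to\infty$. Sending $T\to\infty$ (so that $\varepsilon(T)\to0$ and the $z$-contribution vanishes), then choosing $M$ large so that $\int_{T+1}^{T+M-1}\cosh^{n-1}/\cosh^{n-1}(T+M)$ stays bounded below, and finally optimizing the $y$-cutoff scale in $[0,1]$, the ratio of savings to cost can be driven above $1$, yielding $E_{B_R^+}(v)<E_{B_R^+}(u)$ and contradicting local minimality of $u$.

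The main obstacle is that hyperbolic space has a nonzero isoperimetric constant, so the familiar Euclidean device of taking highly elongated boxes with arbitrarily large volume-to-boundary ratio has no direct analogue. What rescues the argument is the warped product structure: the factor $\cosh^{-2}(t)$ suppresses the $z$-direction transition cost, and the fact that $\int_{T}^{T+M}\cosh^{n-1}$ is concentrated near its upper endpoint means that cutting off at $t=T$ costs exponentially less than cutting at $t=T+M$. The delicate step will be the careful bookkeeping of constants in the balance between savings (order $\cosh^{n-1}(T+M)|K|/(n-1)$) and the $t$- and $y$-cutoff costs, in order to ensure the inequality is strict for the chosen parameters; I expect the cancellation via $u$'s Euler--Lagrange equation (writing the quadratic defect rather than the raw Dirichlet difference) to be essential to close the estimate.
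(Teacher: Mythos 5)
Your broad strategy — argue by contradiction from $F(s_0) < F(L^+)$, build a competitor $u + \phi(s_0 - u)$ supported on a region pushed toward $t = +\infty$ where $u \approx L^+$, cancel the Euler--Lagrange cross term, then balance Dirichlet cost against potential savings — is the same as the paper's. The geometry you use, however, is different, and this is where the gap lies. The paper works on geodesic balls $B_T^+(b,0,0)$ with the center translated $b \to +\infty$, using a radial cutoff $\xi_{T,b}$ that transitions over an annulus of hyperbolic width $\eta T$, and then chooses $\eta = \eta(T)$; the stated gradient bound $|\nabla_{\HH^n\times\RR_+}\xi_{T,b}| \leq C(n)(\eta T)^{-1}e^{-(1-\eta)T}$ is specific to that translated-ball construction and is what makes the Dirichlet cost exponentially smaller than the savings. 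You instead take a rectangular box $[T,T+M]\times K\times[0,1]$ with a product cutoff $\phi_1(t)\phi_2(z)\eta(y)$, and this does not close.

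The problem is the $t=T+M$ transition. In hyperbolic space the savings $\int_{T+1}^{T+M-1}\cosh^{n-1}t\,dt$ and the cost from $\partial_t\phi_1$ on $[T+M-1,T+M]$ are both of order $e^{(n-1)(T+M)}$; sending $T\to\infty$ or $M\to\infty$ does not change their ratio, so the competition comes down to multiplicative constants involving $F(L^+)-F(s_0)$ versus $|s_0-L^+|^2$, over which you have no control. Widening the transition to a strip of length $L$ reduces the gradient like $1/L$ but shrinks the savings interval, which costs a factor $e^{-(n-1)L}$ — this dominates, so tuning the transition width does not help either. The warped-product factor $\cosh^{-2}t$ that you invoke only suppresses the $z$-transition cost, which was never the dangerous term; it does nothing for the $t$-transition at the far end. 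Finally, for the $y$-cutoff you restrict to $\mathrm{supp}\,\eta\subset[0,1]$ and propose to ``optimize the scale,'' but shrinking to $[0,h]$ makes $\int_0^h|\eta'|^2 y^a\,dy \sim h^{a-1}$ blow up for $a<1$; the profitable move is to \emph{extend} the $y$-support (to height $R$ with cost $\sim R^{a-1}\to0$), which is precisely what the paper's radius-$T$ ball does automatically as $T\to\infty$. Without a device analogous to the paper's translated geodesic balls and the quantitative radial cutoff estimate, the ``delicate bookkeeping'' you defer to would not in fact close the estimate.
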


\begin{proof}
As in \cite{Cabre-Sire:II}, it suffices to prove that  $F\ge F(L^-)$ and $F\ge F(L^+)$.
Since the proofs of both inequalities are analogous, it is enough to show one of them. We will establish the second one by a contradiction argument. Moreover, since the solution is independent of considering translations of $F$, we may assume that there exists a point $s$ such that
$$
F(s)=0<F(L^+) \quad \text{ for some } s\in [-1,1].$$
 Since $F(L^+) > 0$, we
have that $$ F(\tau) \geq \varepsilon > 0 \quad\text{ for $\tau$ in a neighborhood in $[-1,1]$ of } L^+ $$ for
some $\varepsilon>0$.

Consider the points $(b,0,0)$ on $\partial(\hh^{n}\times \rr_+)$ (that is points,  where $t=b, z=0,y=0$). Since for $T>0$, $$
E_{B^+_T(b,0,0)}(u) \geq \int_{\Gamma^0_T(b,0)} F(u(x,0))\, dV_{\hh^n}(x) $$ and, since
in $\hh^n$ the volume of the ball of radius $T$ is given by $\omega_n\int_0^T\cosh^{n-1}t dt$, for large $T$ we have $\text{Vol}(\Gamma^0_T(b,0))\sim
c(n) e^{(n-1)T}$ and
 $u(x,0) \underset{x_1 \to +
\infty}{\longrightarrow} L^+$, we deduce
\begin{equation} \label{lowbou} \varliminf_{b\to +\infty} E_{B^+_T(b,0,0)}(u)
\geq c(n)\, \varepsilon e^{(n-1)T} \qquad \text {for all }  T > 1, \end{equation}
where the constant $c(n)$ depends only on $n$.

The lower bound \eqref{lowbou} will give a contradiction with the upper bound \eqref{upbou} for the energy of $u$, that is obtained using the local minimality of $u$.
For every $T>1$,  $b\in \rr$ and  $\eta \in (0,1)$, we may define a smooth function $\xi_{T,b}$  in $\hh^n\times \RR_+$ that satisfies $0\leq \xi_{T,b}\leq 1$, $$ \xi_{T,b}= \begin{cases}
\displaystyle 1 & \text{ in} \,\,\,B^+_{(1-\eta)T}(b,0,0), \\ 0 & \text{ on} \,\,\,(\hh^n\times\RR_+) \backslash B_T^+(b,0,0) , \end{cases}
$$ and $|\nabla_{\hh^n\times \rr_+}\xi_{T,b}|\leq C(n)(\eta T)^{-1}e^{-(1-\eta)T}$.  Since $$ (1-\xi_{T,b})u+\xi_{T,b}s=u+\xi_{T,b}(s-u) $$ takes values
in $[-1,1]$ and agrees with $u$ on $\Gamma^+_T(b,0,0)$, we have that $$ E_{B^+_T(b,0,0)}(u) \leq
E_{B^+_T(b,0,0)}(u+\xi_{T,b}(s-u)).  $$

Using that  $F(s)=0$ we have that the potential energy is only nonzero in $B^+_{T}\setminus
B^+_{(1-\eta)T}$. Since  $\text{Vol}((B^+_{T}(b,0,0)\setminus
B^+_{(1-\eta)T}(b,0,0))\cap \{y=0\})=\text{Vol}(\Gamma^+_{T}(b,0,0)\setminus
\Gamma^+_{(1-\eta)T}(b,0,0))\leq C(n)\eta T e^{(n-1)T}$, we have that
\begin{equation}\label{energy10}\int_{\Gamma^+_{T}(b,0,0)} F(u+\xi_{T,b}(s-u))dV_{\hh^n} \leq C(n)\eta T e^{(n-1)T}.\end{equation}
On the other hand, applying the gradient estimate from Lemma \ref{lemma-gradient}, we deduce that \begin{eqnarray} & & \hspace{-2cm}\varlimsup_{b\to +\infty} \int_{B^+_T(b,0,0)} y^a
|\nabla_{\hh^n\times \rr}\{u+\xi_{T,b}(s-u)\}|^2  \leq 2\int_{B_T^+} y^a |\nabla_{\hh^n\times \rr}\xi_{T,b}|^2 dV_{\hh^n}dy \notag\\ &\leq&  \frac{C(n)}{\eta^2 T^2} e^{(n-1)T-2(1-\eta)T} \int_0^T
y^a\,dy =  C(n)  e^{(n-1)T-2(1-\eta)T} \frac{T^{1+a-2}}{\eta^2}.\label{energy11} \end{eqnarray} Putting together the bounds
for Dirichlet and potential energies \eqref{energy11}-\eqref{energy10}, we conclude that
\begin{equation}\label{upbou}
\begin{split} \varlimsup_{b\to +\infty} E_{B^+_T(b,0,0)}(u)
&\leq  \varlimsup_{b\to +\infty}E_{B^+_T(b,0,0)}(u+\xi_{T,b}(s-u))\\
 &\leq  C\{ \eta T e^{(n-1)T}
 + \eta^{-2}e^{(n-1)T-2(1-\eta)T} T^{1+a-2}\},
\end{split}\end{equation} for some constant $C>0$ depending only on $n$, $a$, and $F$.

Now, choosing a suitable $\eta=\eta(T)$  gives a contradiction between \eqref{lowbou} and \eqref{upbou} for $a<1$ and $T$ large.
 \end{proof}

\begin{lemma}\label{lemma-gradient}
Let $u$ be a bounded solution of \eqref{problem-extension} such that
$$\lim_{t\to\pm \infty} u(t,z,0)=L^{\pm}\quad \text{for every}\quad z\in\mathbb H^{n-1}$$
for some constants $L^+$ and $L^-$ (that could be equal). Then
$$ \|\nabla_{\hh^n} u\|_{L^\infty(B^+_T(x,0))} \rightarrow 0\,\,\mbox{as $t
  \rightarrow \pm \infty$},
$$
\end{lemma}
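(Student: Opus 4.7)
The plan is to argue by contradiction, exploiting the homogeneity of $\mathbb H^n$ under its isometry group. Suppose the conclusion fails along $t\to+\infty$ (the case $-\infty$ being symmetric): there exist $\varepsilon>0$ and a sequence $x_k\in\mathbb H^n$ with $t(x_k)\to+\infty$ such that $\|\nabla_{\mathbb H^n}u\|_{L^\infty(B_T^+(x_k,0))}\ge \varepsilon$. After pre-composing with isometries of $\mathbb H^n$ fixing $\Pi$, I may assume all $x_k$ lie on a single geodesic $\eta$ perpendicular to $\Pi$ at a point $o\in\Pi$, so $x_k=\eta(t_k)$ with $t_k\to+\infty$. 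Let $\phi_k$ denote the hyperbolic translation along $\eta$ mapping $o$ to $x_k$. Since $\phi_k$ is an isometry of $\mathbb H^n$, the function $u_k(x,y):=u(\phi_k(x),y)$ solves the same extension problem \equ{problem-extension}, and by isometry invariance $\|\nabla_{\mathbb H^n}u_k\|_{L^\infty(B_T^+(o,0))}\ge \varepsilon$.

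The next step is to extract a subsequential limit and identify its trace. Lemma~\ref{regNL} provides uniform $C^{1,\beta}$ bounds on $\overline{\mathbb H^n\times\mathbb R_+}$ depending only on $\|u\|_\infty$, $n$, $\gamma$ and $\|f\|_{C^{1,\alpha}}$. Arzel\`a--Ascoli therefore gives, along a subsequence, $u_k\to u_\infty$ in $C^1_{\mathrm{loc}}(\overline{\mathbb H^n\times\mathbb R_+})$, where $u_\infty$ is bounded and solves $H_a u_\infty=0$ with the same nonlinear boundary condition. In Fermi coordinates based on $\eta$, the translation acts as $(s,v)\mapsto(s+t_k,v)$, so for each fixed $x$ the signed distance of $\phi_k(x)$ to $\Pi$ tends to $+\infty$ while the foot of the perpendicular $z_k'(x)\in\Pi$ stays in a compact set (in fact $z_k'(x)\to o$ as the transverse metric to $\Pi$ compresses). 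Combining the hypothesis $u(t,z,0)\to L^+$ for each fixed $z$ with the uniform $C^\beta$ modulus of $u(\cdot,0)$, I obtain $u_k(x,0)\to L^+$ pointwise, hence uniformly on compact sets, so $u_\infty(\cdot,0)\equiv L^+$ on $\mathbb H^n$.

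The main step is a Liouville statement: a bounded solution $u_\infty$ of $H_a u_\infty=0$ on $\mathbb H^n\times\mathbb R_+$ with constant trace $L^+$ must be identically $L^+$. Setting $v:=u_\infty-L^+$, I would compare $v$ to the explicit $H_a$-harmonic barrier $\Psi_R(y):=\|v\|_\infty(y/R)^{1-a}$ on the cylinder $\mathbb H^n\times(0,R)$: both $v\pm\Psi_R$ are bounded with the appropriate sign on $\{y=0\}$ and $\{y=R\}$, and a Phragm\'en--Lindel\"of extension of the weak maximum principle of Remark~\ref{MPweak} (valid for bounded solutions of the $\mathcal A_2$-weighted equation by \cite{FKS}) yields $|v|\le \Psi_R$ throughout; sending $R\to\infty$ forces $v\equiv 0$. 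Thus $\nabla_{\mathbb H^n}u_\infty\equiv 0$, and the uniform $C^1$-convergence on $\overline{B_T^+(o,0)}$ gives $\|\nabla_{\mathbb H^n}u_k\|_{L^\infty(B_T^+(o,0))}\to 0$, contradicting the lower bound $\varepsilon$. The principal obstacle is justifying this Phragm\'en--Lindel\"of statement on the exponentially growing manifold $\mathbb H^n$; alternatively one can invoke the Poisson representation from Theorem~\ref{thm-extension} and the fact that $\mathcal P_y^\gamma$ integrates to $1$, so that the unique bounded $H_a$-harmonic extension of the constant trace $L^+$ is itself the constant $L^+$.
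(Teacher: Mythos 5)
Your argument coincides with the paper's: transport $u$ via hyperbolic isometries so that the translates concentrate near a boundary point of $S_+$, use the uniform regularity of Lemmas~\ref{regularity1} and \ref{regNL} to extract a $\mathcal C^1_{\mathrm{loc}}$-limit $u_\infty$ on $\overline{\HH^n\times\RR_+}$, and observe that $u_\infty$ has constant trace so its hyperbolic gradient vanishes, forcing $\|\nabla_{\HH^n}u_k\|_{L^\infty(B_T^+)}\to 0$. The only substantive difference is that you are more explicit than the paper about the Liouville step (that the limit, and not merely its trace, is the constant $L^+$); your second suggestion, invoking the Poisson representation of Theorem~\ref{thm-extension} together with $\int_{\HH^n}\mathcal P_y^\gamma\,dV=1$, is the clean way to close this, whereas the paper simply asserts the constancy of the limit by reference to Lemma 4.8 of Cabr\'e--Sire.
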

\begin{proof}
We follow the proof of Lemma 4.8 in \cite{Cabre-Sire:I}. Let $\Pi=\{t=0\}$ and consider a sequence of isometries $i_n$ that satisfies $i_n(\Pi)$ converges to a point on the hyperbolic boundary. Let $u_n(z,y)=u(i_n(z),y)$ for $z\in \HH^n$ and $y\in \rr^+$. These functions satisfy \eqref{problem-extension}, hence, using the uniform H\"older estimates from Lemma \ref{regularity1}, we have that they converge locally uniformly. From the initial hypothesis on $u$, we have that the limit has to be identically a constant.
Finally, the uniform $\mathcal C^\beta$ estimate for $|\nabla_{\mathbb H^n} u|$ from Lemma \ref{regNL} finishes the proof.

\end{proof}


\section{The one-dimensional solution}

In the section we provide the proof of Theorem \ref{thm-one-dimensional}. We look for a one-dimensional solution $w$ to problem \eqref{initial-problem} that depends only on the signed distance to the fixed  totally geodesic hyperplane $\Pi$. In the light of Theorem \ref{thm-extension}, it is equivalent to find a function $u(t,y)$ satisfying \eqref{problem-extension}. In particular, $u$ is a solution to
\begin{equation*}
\begin{cases}
H_au=0, &\quad\text{for }(t,y)\in\mathbb R\times \mathbb R_+,\\
-y^a \partial_y u|_{y=0}=f(u),&\quad\text{for }t\in\mathbb R,
\end{cases}\end{equation*}
where the operator $H_a$ is given by
$$H_au=\del_{yy}+\frac{a}{y} \del_y  +\del_{tt} + (n-1)\tanh t\, \del_t\,.$$
We will follow the arguments in \cite{Cabre-Sire:II} for the Euclidean case with appropriate modifications.  The main difficulty in our setting is the lack of translation invariance.
However, the structure of the proof remains unchanged.

Notice that one dimensional solutions have infinite energy with respect to \eqref{energy1} (when the domain is unbounded respect to the variable on $\Pi$). Nonetheless, the reduced problem is variational. Indeed, we may consider the one-dimensional energy functional in a domain $\Omega\subset \mathbb R\times\mathbb R_+$, that is written as
\be  E_\Omega(u)=\int_{\Omega} \left[\frac{ |\del_t u|^2+  |\del_y u|^2}{2}\right]y^a (\cosh t)^{n-1}dt dy+\int_{\partial\Omega\cap \{y=0\}} F(u)(t,0)
(\cosh t)^{n-1} dt.\label{defE}\ee
where $-F'=f$.

In what follows, we will consider only this reduced energy (which, in order to simplify notation,  we denote also as $E_\Omega$). In addition, note that the proof of  Proposition \ref{sameheight} and Lemma \ref{lemma-gradient} can be carried in the same fashion for one dimensional solutions to \eqref{1dimsol} with finite one-dimensional energy.

We also remark that the first order quantities in \eqref{defE} correspond to the gradient in $\rr \times \rr_+$. Hence we denote $|\del_t u|^2+  |\del_y u|^2=|\nabla_{\rr \times \rr_+} u|^2$.

\subsection{Local solutions}

In the following, we will be using the coordinates $(t,z)\in \rr \times \hh^{n-1}$ given by
\equ{decompH}.
We will work with a domain $\Omega_{T,R}\subset \mathbb R\times \mathbb R^+$ given by
$$\Omega_{T,R}:=\{(t,y): -T<t<T, y\in(0,R)\}.$$
Define the partial boundary
$$\partial^+\Omega_{T,R}:=\overline{\partial\Omega_{T,R} \cap \{y>0\}}.$$
Fix a weight $\omega_a= y^a(\cosh t)^{n-1}$. Given $v\in \mathcal C^\beta(\overline\Omega_{T,R})\cap W^{1,2}(\Omega_{T,R},\omega_a)$ satisfying $|v|\leq 1$, consider the class
\begin{equation*}
\mathcal A_v:=\{u\in W^{1,2}(\Omega_{T,R},\omega_a) : |u|\leq 1 \mbox{ a.e. in }\Omega_{T,R}, u=v\mbox{ on }\partial^+ \Omega_{T,R} \}
\end{equation*}

\begin{lemma} \label{compacts}
Let $v\in \mathcal C^\beta(\overline\Omega_{T,R})\cap W^{1,2}(\Omega_{T,R},\omega_a) $ be a given function satisfying $|v|\leq 1$, where $\beta\in(0,1)$. Assume that
\begin{equation*}\label{subsuper} f(1)\le 0 \le f(-1). \end{equation*}
Then the functional
$E_{ \Omega_{T,R} }(u)$ defined by \eqref{defE}
admits an absolute minimizer $u_{T,R}$ in the class $\mathcal A_v$ that is a weak solution to the problem
\begin{equation*}
\begin{cases}
H_a u=0 &\mbox{ in }\Omega_{T,R},\\
-y^a  \partial_y u|_{y=0} =f(u) &\hbox{ for }t\in [-T,T],\\
u= v &\mbox{ on }   \partial^+\Omega_{T,R}.
\end{cases}
\end{equation*}
Moreover, $u_{T,R}$ is stable in the sense that
$$\int_{-T}^T
\int_{0}^R \left[\frac{ |\del_t \xi|^2+  |\del_y \xi|^2}{2}\right]y^a (\cosh t)^{n-1}dt dy+\int_{-T}^T f'(u_{T,R})(t,0)
(\cosh t)^{n-1}\xi^2 dt\geq 0,$$
 for every $\xi \in W^{1,2}(\Omega_{T,R},\omega_a)$ such that $\xi\equiv 0$ on $\partial^+\Omega_{T,R}$ in the weak sense.
\end{lemma}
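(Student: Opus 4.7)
The plan is to apply the direct method of the calculus of variations. The weight $\omega_a=y^a(\cosh t)^{n-1}$ on the bounded cylinder $\Omega_{T,R}$ is of Muckenhoupt class $A_2$ (since $(\cosh t)^{n-1}$ is bounded above and away from zero on $[-T,T]$ and $y^a$ is $A_2$ for $a\in(-1,1)$), so $W^{1,2}(\Omega_{T,R},\omega_a)$ enjoys Rellich--Kondrachov compactness and a compact trace onto $[-T,T]\times\{0\}$ with weighted measure $(\cosh t)^{n-1}\,dt$; see \cite{FKS}. The functional $E_{\Omega_{T,R}}$ is bounded below on $\calA_v$ because the bulk term is nonnegative and $|F(u)|$ is uniformly bounded for $|u|\leq 1$. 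Any minimizing sequence $\{u_k\}\subset\calA_v$ is uniformly bounded in $W^{1,2}(\Omega_{T,R},\omega_a)$, so a subsequence satisfies $u_k\rightharpoonup u_{T,R}$ weakly, pointwise a.e., strongly in $L^2(\Omega_{T,R},\omega_a)$, with traces converging in $L^2([-T,T],(\cosh t)^{n-1}dt)$. The convex constraint $|u|\leq 1$ passes to the limit, the trace identity $u_{T,R}=v$ on $\partial^+\Omega_{T,R}$ follows from weak continuity of the trace on the non-degenerate part of the boundary, the bulk Dirichlet integral is weakly lower semicontinuous by convexity, and strong trace convergence together with continuity of $F$ on $[-1,1]$ gives continuity of the boundary potential. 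Hence $u_{T,R}$ is an absolute minimizer in $\calA_v$.

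The main obstacle is extracting the weak Euler--Lagrange equation in the presence of the pointwise constraint $|u|\leq 1$. The hypothesis $f(1)\leq 0\leq f(-1)$ is tailor-made: it says that $\pm 1$ are respectively a super- and a subsolution of the boundary reaction, so the constraint does not bind at the first-order level. Given $\xi\in W^{1,2}(\Omega_{T,R},\omega_a)$ with $\xi\equiv 0$ on $\partial^+\Omega_{T,R}$ and $\epsilon\in\mathbb R$ small, the truncated competitor $u_\epsilon:=\max(-1,\min(1,u_{T,R}+\epsilon\xi))$ lies in $\calA_v$. Writing $u_\epsilon-u_{T,R}=\epsilon\xi-r_\epsilon$, where $r_\epsilon$ is supported on the clipping set $\{|u_{T,R}+\epsilon\xi|>1\}$ (on which $u_{T,R}=\pm 1$ in the $\epsilon\to 0$ limit), and expanding $E(u_\epsilon)\geq E(u_{T,R})$ to first order, the contribution from $r_\epsilon$ reduces to a boundary integral weighted by $f(\pm 1)$ whose sign is favorable by the hypothesis. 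A standard obstacle-problem argument, varying the signs of $\epsilon$ and $\xi$, then forces equality and yields
\begin{equation*}
\int_{\Omega_{T,R}} y^a\,\nabla u_{T,R}\cdot\nabla\xi\,(\cosh t)^{n-1}\,dt\,dy\;=\;\int_{-T}^T f(u_{T,R})(t,0)\,\xi(t,0)\,(\cosh t)^{n-1}\,dt
\end{equation*}
for all admissible $\xi$, which is the desired weak formulation of the problem since $H_au=0$ is the divergence form $\divergence(\omega_a\nabla u)=0$.

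Stability follows by expanding $E(u_\epsilon)\geq E(u_{T,R})$ to second order in $\epsilon$: the first-order term vanishes by the Euler--Lagrange identity just derived, the quadratic term produces the stated bilinear form, and the contribution from $r_\epsilon$ is higher order by Taylor expansion together with the favorable sign from $f(\pm 1)$. Dividing by $\epsilon^2$ and letting $\epsilon\to 0$ then yields the stability inequality claimed in the lemma.
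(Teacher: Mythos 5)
Your proof is correct in outline but takes a genuinely different route from the paper. The paper sidesteps the constrained minimization entirely: it replaces $f$ by the continuous extension
$\hat f(s)=f(-1)$ for $s\le -1$, $\hat f(s)=f(s)$ for $|s|\le 1$, $\hat f(s)=f(1)$ for $s\ge 1$,
defines $\hat F=-\int_0^s\hat f$, and minimizes the corresponding functional $\hat E$ over the \emph{unconstrained} class $\calA'_v=\{u\in W^{1,2}(\Omega_{T,R},\omega_a):u=v\text{ on }\partial^+\Omega_{T,R}\}$. Since $\hat f$ is continuous, $\hat E$ is $\mathcal C^1$ on $\calA'_v$ and the Euler--Lagrange equation and stability come for free from the first and second variations, with no obstacle-type argument needed; the sign condition $f(1)\le 0\le f(-1)$ is then used \emph{a posteriori} to show that the unconstrained minimizer actually lies in $[-1,1]$ (because truncating to $[-1,1]$ decreases both the Dirichlet energy and, thanks to the sign of $\hat F'$, the potential energy), after which $\hat E$ and $E$ coincide and one may drop the hats. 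You instead minimize over the constrained class $\calA_v$ and then extract the Euler--Lagrange equation via truncated competitors $u_\epsilon=\max(-1,\min(1,u_{T,R}+\epsilon\xi))$, using the sign condition to control the clipping term. Both routes use the hypothesis $f(1)\le 0\le f(-1)$ for the same underlying reason, but the paper's unconstrained formulation is noticeably cleaner at the step you describe briefly: your expansion of $E(u_\epsilon)$ implicitly needs $F$ to make sense at $u_{T,R}+\epsilon\xi$, which can exceed $1$ in absolute value, so you are in effect already invoking the extension $\hat F$ without saying so; and the second-order expansion of $E(u_\epsilon)$ in $\epsilon$ is not smooth across the clipping set, whereas the paper's second variation of $\hat E$ in the linear class $\calA'_v$ is the standard computation. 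Your argument can be made rigorous, but I would recommend stating the extension $\hat f$, $\hat F$ explicitly and observing that $E(u_\epsilon)\le\hat E(u_{T,R}+\epsilon\xi)$ (both the Dirichlet and the boundary contributions decrease under truncation, the latter because of the sign condition); this turns your sketch into the paper's argument, which is the cleanest way to close the gap.
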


\begin{proof}
The proof is the same as Lemma 4.1 in \cite{Cabre-Sire:II}.
It is useful to consider the following continuous extension $\tilde f$ of $f$ outside $[-1,1]$:
\begin{equation*}
\hat f (s)= \begin{cases} f(-1)&\text{ if
} s\le -1,\\ f(s) &\text{ if } -1\le s \le 1,\\ f(1) &\text{ if } 1\le s.
\end{cases} \end{equation*}
Let $$ \hat F(s)=-\int_0^s \hat f, $$
and consider the new functional
$$ \hat E[u]:= \int_{-T}^T
\int_{0}^R \left[ \frac{|\del_t u|^2+  |\del_y u|^2}{2}\right]y^a (\cosh t)^{n-1}dt dy+\int_{-T}^T \hat F(u)(t,0)
(\cosh t)^{n-1} dt,$$
in the class
$$\mathcal A'_v:=\{u\in W^{1,2}(\Omega_{T,R},\omega) :  u=v\mbox{ on }\partial^+ \Omega_{T,R} \}.$$
Note that $\tilde F=F$ in $[-1,1]$, up to an additive constant. Therefore, any minimizer $u$ of $\hat E$ in
$\mathcal A'_v$ such that $-1\le u \le 1$ is also a minimizer of the original functional $E$ in $\mathcal A_v$.

To show that  $\hat E$ admits a minimizer in $\mathcal A'_v$, we use a standard compactness argument.
Indeed, let $u\in\mathcal A'_v$. Since $u-v\equiv 0$ on $\partial^+\Omega_{T,R}$, we can extend $u-v$ to be identically
$0$ in $\mathbb R\times\rr_+\setminus \Omega_{T,R}$, and we have $u-v\in W^{1,2} (\mathbb R \times \mathbb R^+,\omega_a)$. Traces of functions in this space belong to $H^\gamma(\mathbb R,(\cosh t)^{n-1})$, and  we have the (compact) embedding $H^\gamma(D, (\cosh t)^{n-1}) \hookrightarrow L^{p}(D,(\cosh t)^{n-1}),$
for any compact domain $D\subset \mathbb R$.

Stability follows by taking a second order variation of the functional $\hat E$ in the space $\mathcal A'_v$. Since $\hat f$ is a continuous function and $\hat E$ is a $\mathcal C^1$ functional in $\mathcal A'_v$.  Therefore, it only remains to show that the minimizer $w$ satisfies $$ -1\le w \le 1 \quad\text{a.e. in } \Omega, $$
but this does not present any further difficulty than in \cite{Cabre-Sire:II}.
\end{proof}

\subsection{Existence: the limit $T\to\infty$}

In the following proposition we are going to construct a layer solution by passing to the limit $T\to\infty$ with the local solutions constructed in the previous subsection. We denote the reduced ball
\begin{align*}S_T^+= &\{(t,y)\;:\;
t^2+y^2\leq T^2, \; y>0\}.\\
\end{align*}

\begin{prop}\label{keyexist} Assume that  $$ F'(-1) = F'(1) = 0 \quad\text{ and }\quad F> F(-1)=F(1)\
\text{ in } (-1,1). $$ Then, for every $T>0$, there exists a function $u^T\in \mathcal C^\beta(\overline{S_T^+})$ for some
$\beta \in (0,1)$ independent of $T$, such that
\begin{align*}
 &-1<u^T< 1 \quad\text{in }\overline{S_T^+}, \\
& u^T(t_0,0) = 0 \quad\hbox{for some }t_0\in \rr, \\
 &\partial_t u^T \ge 0\quad \text{in } S_T^+ ,
 \end{align*}
 and $u^T$ is a minimizer of the energy in $S_T^+$, in the sense that
$$ \ E_{S_T^+}(u^T)\le \ E_{S_T^+}(u^T+\psi) $$ for every $\psi\in \mathcal C^1(\overline{S_T^+})$ with compact support in
$S_T^+\cup\Gamma_T^0$ and such that $-1\le u^T+\psi\le 1$ in $S_T^+$.

Moreover, as a consequence of the previous statements, we will deduce the existence of a subsequence of $\{u^T\}$ which converges in
$\mathcal C^\beta_{\rm loc}(\overline{\R^2_+})$ to a one-dimensional solution of \equ{1dimsol}.
\end{prop}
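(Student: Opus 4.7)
The strategy adapts the Euclidean construction of Proposition~4.2 in \cite{Cabre-Sire:II} to the hyperbolic weight $(\cosh t)^{n-1}$; the main new feature is the loss of translation invariance in $t$, which makes the monotonicity step the main technical point.

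First, I apply a minor extension of Lemma~\ref{compacts}, with the rectangular domain $\Omega_{T,R}$ replaced by the half-ball $S_T^+$ (the proof goes through verbatim), to a boundary datum $v=v(t)\in\mathcal C^\beta(\overline{S_T^+})$ that depends only on $t$, is non-decreasing, and satisfies $v(\pm T)=\pm(1-\delta)$ and $v(0)=0$ for some small $\delta>0$. This yields an absolute minimizer $u^T\in\mathcal A_v$ of the reduced energy $E_{S_T^+}$ which is a weak solution of the extension problem on $S_T^+$. The uniform $\mathcal C^\beta$ regularity in $T$ is then supplied by Lemma~\ref{regularity1}. Since $f(\pm 1)=-F'(\pm 1)=0$, the constants $\pm 1$ are stationary solutions, so applying the strong maximum principle (Remark~\ref{MPweak}) and Hopf's lemma (Lemma~\ref{lemma-hopf}) to the non-negative weak supersolutions $1\mp u^T$ yields the strict bounds $-1<u^T<1$ throughout $\overline{S_T^+}$.

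The crucial step is showing $\partial_t u^T\ge 0$. In the absence of translation invariance I would restrict the minimization to the convex sub-class $\mathcal A_v^\uparrow\subset\mathcal A_v$ of admissible functions that are $t$-non-decreasing on almost every horizontal slice $\{y=\text{const}\}$; this class is closed under weak $W^{1,2}(S_T^+,\omega_a)$-convergence, so by the same compactness argument as in Lemma~\ref{compacts} it contains a minimizer. The key remaining point is that any minimizer over $\mathcal A_v^\uparrow$ is in fact a minimizer over the whole class $\mathcal A_v$. This relies on a slice-wise weighted P\'olya--Szeg\H{o}-type inequality: given $w\in\mathcal A_v$, its $t$-increasing rearrangement $w^\#$ with respect to the symmetric weighted measure $(\cosh t)^{n-1}\,dt$ on each slice lies in $\mathcal A_v$ (the monotonicity of $v$ in $t$ ensures the boundary trace is preserved) and satisfies $E_{S_T^+}(w^\#)\le E_{S_T^+}(w)$. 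The $y$-derivative part of the energy is unaffected by slice-wise rearrangement, while the $t$-derivative and potential parts decrease thanks to the \emph{evenness} of $(\cosh t)^{n-1}$ about $t=0$; this symmetry plays the role that translation invariance plays in the Euclidean case. A standard perturbation/Euler--Lagrange argument then shows that $u^T$ weakly solves the extension problem inside $S_T^+$.

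Once monotonicity is secured, continuity and the boundary values on $\partial^+S_T^+$ force $u^T(\cdot,0)$ to vanish at some $t_0=t_0(T)$. Passage to the limit is routine: the uniform $L^\infty$-bound $|u^T|\le 1$ and the Hölder estimates of Lemma~\ref{regularity1} yield $\mathcal C^\beta_{\mathrm{loc}}$-bounds on $\overline{\RR^2_+}$ independent of $T$, so a diagonal extraction produces a subsequence converging in $\mathcal C^\beta_{\mathrm{loc}}$ to a function $u^\infty$ that solves \eqref{1dimsol}, satisfies $|u^\infty|\le 1$ and $\partial_t u^\infty\ge 0$, and (after extracting a further subsequence in which $t_0(T_k)\to t_\infty$) passes through zero at $(t_\infty,0)$. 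Boundedness of the sequence $t_0(T_k)$ is itself not automatic: if it escaped to $\pm\infty$ one could show, using monotonicity together with the energy upper bound and the necessary condition of Proposition~\ref{sameheight}, that $u^\infty$ would be the constant $\pm 1$, contradicting the local minimality inherited from $u^T$ by lower semicontinuity. The main obstacle throughout is the weighted P\'olya--Szeg\H{o} step in the monotonicity argument; the back-up plan, if the full inequality proves delicate, is to work directly inside $\mathcal A_v^\uparrow$ and verify by a careful test-function analysis that the monotonicity constraint is not active, so that the Euler--Lagrange equation still holds on the interior.
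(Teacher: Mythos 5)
Your overall architecture (minimize in a truncated domain, establish monotonicity in $t$, pass to the limit, use Proposition~\ref{sameheight} to pin down the limits) matches the paper's, but the two steps that you flag as delicate are in fact where the argument breaks down, and the paper handles both differently.

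The monotonicity step is the genuine gap. The weighted P\'olya--Szeg\H{o} inequality you invoke, namely that the $t$-increasing rearrangement with respect to the measure $(\cosh t)^{n-1}\,dt$ on each slice satisfies $E_{S_T^+}(w^\#)\le E_{S_T^+}(w)$, is false for this weight. A one-dimensional weighted monotone rearrangement decreases the Dirichlet energy precisely when the half-intervals $(-T,s)$ are isoperimetric for the weighted measure; for a weight with an \emph{interior minimum} (here at $t=0$) they are not. Concretely, a set of small $\mu$-mass placed near $t=0$ has relative perimeter $\approx 2(\cosh 0)^{n-1}=2$, while the corresponding half-interval $(-T,s)$ with $s$ close to $-T$ has relative perimeter $\approx (\cosh T)^{n-1}$, which is exponentially larger. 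Testing with a bump function supported near $t=0$ makes the rearranged Dirichlet energy blow up like $(\cosh T)^{2(n-1)}$ while the original stays bounded. The \emph{evenness} of $(\cosh t)^{n-1}$ is not the relevant structure: it replaces neither translation invariance nor a log-concavity hypothesis, and in fact the interior minimum is exactly what kills the inequality. Your fallback (restrict to $\mathcal A_v^\uparrow$ and argue the constraint is inactive) is circular without a comparison principle of this type. The paper instead proves $\partial_t u^T\ge 0$ by a \emph{stability} argument taken from \cite{Cabre-SolaMorales}: one plugs $\xi=(\partial_t u^T)^-$ into the second-variation inequality \eqref{quadratic}, after first showing (via a comparison of $u^T$ with $v$ and Hopf's lemma) that $\partial_t u^T>0$ near $\partial^+Q_T^+$, so that $\xi$ is admissible; integrating by parts and using the equation then forces $\xi\equiv 0$. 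This requires no rearrangement inequality and is the robust replacement for sliding.

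Two secondary gaps. (i) Your comparison function $v$ with $v(\pm T)=\pm(1-\delta)$ is too vague to yield the quantitative energy bound that drives the rest of the argument; the paper takes $v(t)=\tanh(\mu t)$ with $\mu>n-1$ precisely so that $F(v(t,0))\sim e^{-2\mu|t|}$ dominates the volume growth $(\cosh t)^{n-1}\sim e^{(n-1)|t|}$, giving $E_{Q_T^+}(u^T)\le CT^{1/4}$ (Step 1), from which the measure estimates $|\{u^T(\cdot,0)>1/2\}|\geq T^{1/4}$ etc.\ follow (Step 2). You never establish such a bound, yet it is what guarantees $|t_0(T)|\le T-T^{1/4}$ and that the limit is nonconstant. (ii) Your argument for the boundedness of $t_0(T)$ does not work: ``$u^\infty$ would be the constant $\pm 1$, contradicting local minimality'' is not a contradiction, since the constants $\pm 1$ \emph{are} local minimizers. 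The paper's Step 4 is different: if $x_T\to+\infty$, one recenters, observes that the coefficient $\tanh(t+x_T)\to 1$, obtains a limit $w$ solving a translation-invariant problem with drift $(n-1)\partial_t$, and derives a Pohozaev-type identity
$(n-1)\int y^a(\partial_t w)^2 =0$,
forcing $w\equiv\mathrm{const}$, which contradicts $w(0,0)=0$ together with $w\to\pm1$. Proposition~\ref{sameheight} does not apply to the recentered limit because the recentered equation is no longer the hyperbolic one.
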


\begin{proof}
This is the analogous to Lemma 7.1 in  \cite{Cabre-Sire:II}. However, there is an important difference: the choice of the comparison function \eqref{defv}. For $T >1$, let $$ Q_T^+ = (-T,T) \times (0,T^{1/8}),\quad \partial^0 Q_T^+=(-T,T)\times \{0\},\quad \partial^+ Q_T^+=\overline{\partial Q_T^+\cap \{y>0\}}.$$
Consider the function \be  v(t,y) =v(t)=
\tanh(\mu t) \quad \text{ for } (t,y)\in \overline{Q_T^+}, \quad \mu>n-1. \label{defv}\ee
Note that $-1\le v \le 1$ in $Q_T^+$.

Let $u^T$ be the absolute minimizer of Lemma \ref{compacts} for $R=T^
{\frac{1}{8}}$ for $v$ given by \equ{defv}. This function solves the equation
\begin{equation} \begin{cases}
H_a u^T=0&\text{ in } Q_T^+,\label{equation100}\\
-y^a\partial_y u^T =f(u^T)&\text{ on }\partial^0 Q_T^+,\\
|u^T| < 1 \quad &\text{ in } \overline{Q_T^+},\\
u^T=v &\text{ on }\partial^+ Q_T^+.
\end{cases} \end{equation}
The function $u^T$ is H\"older continuous by Lemma~\ref{regularity1}. We will show:
\begin{itemize}
\item \emph{Claim 1:}
\be \label{enerur1}  \quad E_{Q_T^+}(u^T) \leq C T^{1/4} \hbox{ for some constant $C$ independent of $T$. } \end{equation}

\item \emph{Claim 2:}

 \begin{equation}\label{sets1}  \quad|\{u^T(\cdot,0) > 1/2\}| \geq T^{1/4} \text{
and } |\{u^T(\cdot,0) <-1/2\}| \geq T^{1/4}. \end{equation}

\item \emph{Claim 3:}
 \begin{equation*} \label{lastmon}  \partial_t u^T \ge 0 \quad
\text{ in } Q_T^+. \end{equation*}

\item From the previous claims we conclude the existence of a limit satisfying the conditions of the proposition.

 \end{itemize}

\bigskip

\noindent {\it \large Step 1.}

\medskip

We consider
$F-F(-1)=F-F(1)$ as boundary energy potential.

\smallskip
Since $\ E_{Q_T^+}(u^T) \leq \ E_{Q_T^+} (v)$, we simply need to bound the energy of $v$. We have $$ \abs{\nabla_{\rr \times \rr_+} v} =
\abs{\partial_t v} = \mu \sech^2(\mu t), $$ and hence
\begin{equation}\label{energy20} \begin{split}
 \int_{Q_T^+}
\abs{\nabla_{\rr \times \rr_+} v}^2 y^a(\cosh t)^{n-1}\,dtdy=& \mu^2T^{\frac{1+a}{8}} \int_{-T}^T\sech^4(\mu t) (\cosh t)^{n-1}dt\\
\leq & C T^{\frac{1}{4}} (1-e^{(n-1-4\mu )T}). \end{split}
\end{equation}
Using that $F\in \mathcal C^{2,\gamma}$, $F'(-1) = F'(1) = 0$ and $F(-1) = F(1)$, we have that $$ |F(s) -F(1)| \leq C|s-1| \quad\text {for all }s\in [-1,1], $$ for the constant $C=\sup_{s\in[-1,1]} |F'(s)|>0$.
Therefore, \begin{equation*}  |F(v(t,0))-F(1)| \le C\left|v(t,0)-1\right|\leq Ce^{-2\mu t}
\hbox{ for  } t\ge 0.
\end{equation*}
Similarly,
 \begin{equation*}  |F(v(t,0))-F(-1)| \le C\left|v(t,0)+1\right|\leq C e^{2\mu t}
\hbox{ for  } t\le 0.
\end{equation*}
We conclude that $$ \int_{-T}^T \{F(v(t,0))-F(1)\}\, (\cosh t)^{n-1} dt \leq CT(1-e^{(-2\mu+n-1) T}).$$
This, together with the above bound for the Dirichlet energy \eqref{energy20}, gives an upper bound for $E_{Q_T^+}(v)$, which proves \eqref{enerur1}.

\bigskip

\noindent  {\it \large Step 2.}

\medskip
Here we prove \eqref{sets1} for $T$ large enough.

Since $u^T \equiv v$ on $\{y = T^{1/8}\}$ and $\int_{-T}^T v(t)\, dt = 0$, we have \begin{equation}\label{formula10}
\begin{split}
\int_{-T}^T
u^T(t,0)\, dt  &= \int_{-T}^T u^T (t,0) \, dt - \int_{-T}^T u^T(t,T^{1/8})\, dt \\
&= -\int_{Q_T^+} \partial_y u^T\,dtdy. \end{split}\end{equation}
The previous energy bound \eqref{enerur1} and the hypothesis that $F-F(1) \geq 0$ give that the Dirichlet energy alone also
satisfies the bound in \eqref{enerur1}.
Writing $$|\partial_y u^T|= y^{-a/2}(\cosh t)^{\frac{1-n}{2}} y^{a/2} (\cosh t)^{\frac{n-1}{2}}|\partial_y u^T|$$ and from \eqref{formula10},
using the Cauchy-Schwarz inequality, we have
 \begin{equation*} \label{intur} \begin{split} \Big|\int_{-T}^T
u^T(t,0) dt\Big| & = \int_{Q_T^+} |\partial_y u^T|\,dt
   \leq \bigg\{ \int_{Q_T^+} y^{-a} (\cosh t)^{1-n}\,dtdy \int_{Q_T^+} y^a (\cosh t)^{n-1} \abs{\nabla_{\rr \times \rr_+}
    u^T}^2 \,dtdy\bigg\}^{\frac{1}{2}} \\
  & \leq C \Big\{ T^{(1-a)/8}T^{1/4}\Big\}^{1/2}  \leq CT^{1/4},
\end{split} \end{equation*}
where we have used Claim 1 and the fact that $0 < 1-a < 2$.
In particular,
\begin{equation} \label{secintur} \Big| \int_{(-T,T) \cap
\{\abs{u^T(\cdot,0)} > 1/2\}} u^T(t,0)\, dt \Big|  \leq  CT^{1/4}. \end{equation}
On the other
hand, $F(s)-F(1) \geq \varepsilon > 0$ if $s \in [-1/2, 1/2]$, for some $\varepsilon > 0$ independent of $R$, and $F-F(1) \geq 0$ in $(-1,1)$. Moreover, by  \eqref{enerur1} we have $\int_{-T}^T \{ F(u^T(t,0))-F(1)\} \,(\cosh t)^{n-1} dt\le CT^{1/4}$. We deduce $$ \varepsilon \big|\{\abs{u^T(\cdot,0)} \leq 1/2\}\big| \leq \int_{-T}^T \{
F(u^T(x,0))-F(1) \}\, (\cosh t)^{n-1}dt \leq CT^{1/4}, $$ and therefore
$$\big|\{\abs{u^T(\cdot,0)} \leq 1/2\}\big| \leq  CT^{1/4}.$$
We claim that $$ \abs{\{u^T(\cdot,0) > 1/2\}} \geq T^{1/4} \quad\text{ for $T$ large enough}. $$ Suppose not. Then,
using \eqref{secintur} we obtain $$ \displaystyle
\frac12\abs{\{u^T(\cdot,0) < -1/2\}} \leq  \Big| \displaystyle \int_{(-T,T)\cap \{u^T(\cdot,0) < -1/2\}}
 u^T(t,0)\, dt \Big|\le CT^{1/4}.
$$ Hence, all the three sets $\{|u^T(\cdot,0)| \leq 1/2\}$, $\{u^T(\cdot,0) > 1/2\}$, and $\{u^T(\cdot,0) < -1/2\}$
would have length smaller than $CT^{1/4}$. This is a contradiction for $T$ large, since these sets fill $(-T,T)$.

\medskip

\noindent {\it \large Step 3.}
To prove the third claim, we follow one of the three proofs proposed in  \cite{Cabre-SolaMorales} which is based in the stability of the minimizer and does not require sliding.

Since $u^T$ is an
 absolute minimizer, we have
\be Q(\xi)=\int_{Q_T^+}|\nabla_{\rr \times \rr_+} \xi|^2(\cosh t)^{n-1}y^a \;dt dy+\int_{-T}^Tf'(u^T(t,0))\xi^2(\cosh t)^{n-1}dt\geq 0\label{quadratic}\ee
for every $\xi \in W^{1,2}(Q_T^+)$ with $\xi\equiv 0 $ on $\partial Q_T^+\cap\{y>0\}$
 in the weak sense.

 We will justify that we can choose $\xi=(\partial_t u^T)^{-}$:
From  Lemma \ref{regularity1} we have that  $u^T\in W^{2,2}(Q_T^+)\cap \mathcal C^\beta(\overline{Q_T^+})$. Now, we would like to show that $ \partial_t u^T>0 $ on $\{t=T\}$ (and hence $(\partial_t u^T)^{-}\equiv 0$ on  $\partial Q_T^+\cap\{y>0\}$).
Note that $-H_a (u^T-v)<0$ in a neighborhood of $\{t=T\}$  for $T$ big enough. Since  $u^T-v\equiv 0$ on $\{t=T\}$,  using maximum principle for the difference $u^T-v$ and Hopf's boundary lemma from Section \ref{subsection:maximum} we have that $\partial_t(u^T-v)>0$ on $\{t=T\}$. Since  $v$ is increasing in $t$, we have that
$ \partial_t u^T>0 $ on $\{t=T\}$. Analogously, we can conclude on the whole $\partial^+ Q_T^+$.

Hence,  we can choose
 $\xi=(\partial_t u^T)^{-}$ as a test function in \equ{quadratic}. Integrating by parts we have that
$$Q(\xi)=\int_{-T}^T
\left(-y^a\partial_y|_{y=0} (\partial_t u^T)^- + f'(u^T(t,0)) (\partial_t u^T)^-\right) (\partial_t u^T)^- (\cosh t)^{n-1}dt\geq 0.$$
From the equation \eqref{equation100} satisfied by $u^T$, we must have the previous expression $Q(\xi)\equiv 0$. However, since we have strict stability, we  conclude that $(\partial_t u^T)^- \equiv 0$, as desired.

\medskip

\noindent {\it \large Step 4.}

The local convergence of $u^T$ in $\mathcal C^2$ follows from our a priori estimates from Section \ref{subsection-regularity}. Let $u$ be the local limit of $u^T$. We need to show that $u$
is not a trivial function. In order to prove that this is not the case we show that there is a  $t_0$ such that $u(t_0,0)=0$. Since $u$ would be  asymptotic to 1 or $-1$ as $t\to\infty$, it cannot be constant.

From Steps 2 and 3 we have that there is an $x_T$ that  satisfies
\begin{equation*} u^T(x_T,0) = 0.
\end{equation*}
Moreover, Step 2 implies that  $|x_T| \le T - T^{1/4} .$

In order to conclude, it is enough to  show that there is subsequence $x_T$ that converges to a finite value. To this end,  we follow the proof of Proposition 2.4 in  \cite{Birindelli-Mazzeo}, which is different from the Euclidean proof.

Suppose that $x_T\to +\infty$ and consider $w_T(t,y)=u^T(t+x_T, y)$. The functions $w_T$ satisfy
\begin{equation} \begin{cases} \partial_{tt} w_T+(n-1)\tanh (t+x_T) \partial_t w_T
+\frac{a}{y}\partial_y w_T+\partial_{yy} w_T
=0&\text{ in } \tilde{Q}_T^+,\\-y^a \partial_y w_T =f(w_T)&\text{
on }\partial^0 \tilde{Q}_T^+,\\   w_T=v &\text{ on }\partial^+ \tilde{Q}_T^+.\end{cases} \label{eqwt}\end{equation}
where $ \tilde{Q}_T^+ =(-T-x_T, T-x_T)\times(0,T^{\frac{1}{8}})$.
Note that although this is not the same equation we started with, the coefficients are uniformly bounded for every $T$. Hence, standard regularity theory implies that up to subsequence, $w_T(t,y)$ converges locally in $\mathcal C^2$ to a function $w$ that satisfies
\begin{equation*} \begin{cases} \partial_{tt} w+(n-1)\partial_t w
+\frac{a}{y}\partial_y w+\partial_{yy} w
=0&\text{ in } \rr\times \rr_+,\\-y^a \partial_y w =f(w)&\text{
on }\rr\times\{0\},\\   w\to \pm 1  &\text{ as }t\to\pm \infty.\end{cases} \end{equation*}
Additionally, we have that $w(0,0)=0$.  Multiplying  equation \eqref{eqwt} by $y^a w_T$ and integrating on $\tilde{Q}_T^+$ we have that
\bes\begin{split}
0=&\int_{\tilde{Q}_T^+} \left(\frac{y^a}{2}\partial_t\left[\left(\partial_t w_T\right)^2\right]
+(n-1)\tanh(t+x_T)y^a \left(\partial_t w_T\right)^2+ \partial_y(y^a \partial_y w_T)\partial_t w_T\right)\,dtdy
\\
=& \int_0^{T^{1/8}} \frac{y^a}{2}\left.\left(\partial_t w_T\right)^2\right|_{t=-T-x_T}^{T-x_T}dy
 + \int_{-T-x_T}^{T-x_T}\left(
T^{\frac{a}{8}} \partial_y w_T (t,T^{\frac{1}{8}})\partial_t w_T (t,T^{\frac{1}{8}})+
f(w_T(t,0))\partial_t w_T(t,0) \right)dt\\
&+\int_{\tilde Q_T^+} \left((n-1)y^a\tanh(t+x_T) \left(\partial_t w_T\right)^2-\frac{y^a}{2}\partial_t\left[\left(\partial_y w_T\right)\right]^2\right)\,dtdy\\
=& \int_0^{T^{1/8}} \frac{y^a}{2}\left.\left(\partial_t u^T\right)^2\right|_{t=-T}^Tdy
+(n-1)\int_{\tilde Q_T^+} y^a\tanh(t+x_T) \left(\partial_t w_T\right)^2 \,dtdy\\ &  + \int_{-T-x_T}^{T-x_T}
\mu T^{\frac{a}{8}} \partial_y w_T (t,T^{\frac{1}{8}})\left(\sech^2(\mu T)- \sech^2(-\mu T)\right)dt \\
& -F(\tanh(\mu T))+F(\tanh(-\mu T)).
\end{split}\ees

Taking $T\to \infty$ and considering that $\partial_t u_T\to 0 $ as $t\to \infty$ and $\partial_t w_T\to\partial_t w$  locally uniformly
we obtain
$$0=(n-1)\int_0^{\infty} \int_{-\infty}^{\infty} y^a \left(\partial_t w \right)^2\,dtdy, $$
 so $\partial_t w \equiv 0$ and $w$ is constant in $t$, but this is a contradiction with $w(0,0)=0$ and $w\to \pm 1 $ as $t\to\pm \infty$.

Hence, up to subsequence, we may assume that there exists $t_0$ satisfying $x_T\to t_0$. Then, we let $u^T\to u$ as $T\to \infty$
locally uniformly, \ $u(t_0,0)=0$ and
$\lim_{t\to \pm \infty} u(t,0)=\pm 1$. Moreover,  since $\partial_t u^T \geq 0$ in $Q_T^+$ we conclude that $\partial_t u(t,y) \geq 0$ for every $(t,y) \in \rr\times \rr_+$

We can show now that $u$ is a minimizer respect to compact perturbations: Consider
$\psi$ compactly supported on
$B_T^+\cup \Gamma^0_T$ and such that $|u+\psi|\le 1$ in $B_T^+$. Extend $\psi$
to be identically zero outside $B_T^+$, so that $\psi\in H^1_{\rm loc}(\overline{\R^2_+})$. Note that, since $-1<u<1$
and $-1\le u+\psi\le 1$, we have $-1<u+(1-\epsilon)\psi <1$ in $\overline{B_T^+}$ for every $0<\epsilon<1$. Hence, by
the local convergence of $\{u^T\}$ towards $u$, for $T$ large enough we have $B_T^+\subset S_T^+$ and $-1 \le
u^T+(1-\epsilon)\psi \le 1$ in $B_T^+$, and hence also in $S_T^+$. Then, since $u^T$ is a minimizer in $S_T^+$, we have
$E_{S_T^+}(u^T) \leq E_{S_T^+}(u^T + (1-\epsilon)\psi)$ for $T$ large. Since $\psi$ has support in $B_T^+\cup
\Gamma^0_T$, this is equivalent to $$ E_{B_T^+}(u^T) \leq E_{B_T^+}(u^T + (1-\epsilon)\psi) \quad \text{for $T$ large}.
$$ Letting $T \to \infty$, we deduce $E_{B_T^+}(u) \leq E_{B_T^+}(u +(1-\epsilon) \psi)$. We conclude now by letting
$\epsilon\to 0$.

Finally, since $\partial_t u\ge 0$, the limits $L^{\pm}=\lim_{t\to\pm\infty}u(t,0)$ exist. To establish that $u$ is a layer
solution, it remains only to prove that $L^\pm=\pm 1$. For this, note that Proposition~\ref{sameheight}  can be proved in the same fashion for one-dimensional solutions that
have finite (one-dimensional) reduced energy. We leave details to the reader. Applying this modified version of
 Proposition~\ref{sameheight} to
$u$, a local minimizer relative to perturbations in $[-1,1]$ we deduce that $$ F\ge F(L^-)=F(L^+) \quad\text {in }
[-1,1]. $$
Since in addition $F>F(-1)=F(1)$ in $(-1,1)$ by hypothesis, we infer that $|L^\pm|=1$. But $u(t_0,0)=0$ and
thus $u$ cannot be identically $1$ or $-1$. We conclude that $L^-=-1$ and $L^+=1$, and therefore $u$ is a layer solution.
\end{proof}

One could show that the layer solutions have exponential decay towards $\pm 1$. This is a much better behavior than the Euclidean case, that only has power decay at infinity. This is because of the metric factor $(\cosh t)^{n-1}$.

\subsection{Uniqueness}\label{section-uniqueness}

Here we prove the second statement in Theorem \ref{thm-one-dimensional} on uniqueness. As a byproduct, monotonicity is also shown (see Corollary \ref{cor-monotonicity}).

Uniqueness in Euclidean space follows from the sliding method. In the hyperbolic case, \emph{sliding} must be replaced by \emph{stretching}, a transformation that leaves the Laplacian \eqref{Laplace-Beltrami} invariant.  More precisely, when considering the half space model, we consider the isometry given by scaling by positive constants (see Section \ref{model} to recall the definition of this model). This method has been successfully employed to obtain symmetry results for Laplacian semilinear equations on hyperbolic space (see \cite{Kumaresan-Prajapat, Almeida-Ge,Birindelli-Mazzeo}).

For the fractional Laplacian in the Euclidean case, uniqueness is achieved by sliding on the horizontal variable $x$ (see \cite{Cabre-SolaMorales,Cabre-Sire:II}). In the following we show that this method still works in hyperbolic space if one `stretches' on the horizontal variable.

Note that the assumption on $F''(-1),F''(1)>0$ may be weakened to the requirement that $f$ is non-increasing in $(-1,\tau)\cup (\tau,1)$ for some $\tau\in(0,1)$.

Fix $\Pi$ a totally geodesic hyperplane, and $t$ the signed distance to $\Pi$. This hyperplane divides the unit sphere $\mathbb S^{n-1}=\partial_\infty \mathbb H^n$ into two regions $S_+$ y $S_-$. For any $x_0\in\Pi$, the variable $t$ gives the parametrization of a curve $\sigma$ in hyperbolic space which passes through $x_0$ and is orthogonal to $\Pi$. Let $P_+\in S_+$ and $P_-$ be the limits of such curve when $t\to +\infty$ and $t\to -\infty$, respectively.

Let $u_i=u_i(t,y)$, $i=1,2$, be two solutions of \eqref{1dimsol} such that
$$u(t,0)\to \pm 1 \hbox{ as } t\to \pm  \infty,\quad\text{and} \quad |u|< 1.$$

As mentioned above, we work with the upper half-space model of hyperbolic space. Compose with a M\"obius transform so that $P_-=0$ and $P_+=\infty$; the images of $S_-$ and $S_+$, which we denote by the same symbols, are then some ball in $\mathbb R^{n-1}$ containing 0, not necessarily at its center, and the exterior of this ball (union $\infty$), respectively. Moreover, the curve $\sigma$ is transformed into some ray $\Lambda$ in the upper half-space emanating from zero.

By some abuse of notation, denote $u_i(t,y)$, $i=1,2$, be the given solutions transplanted to the new model, in such a way that the $t$ coordinate is a parametrization of this ray, i.e., $\Lambda=\{t\in(0,\infty)\}$ (and notice that from now on, $t$ does not represent the signed distance, but an appropriate function of it). Assume also that both solutions are normalized in such a way that
\begin{equation}
\label{normalization}
u_1(1,0)=u_2(1,0)=0.
\end{equation}
For $R>1$ consider the rescaling
$$u_2^R(t)=u_2(Rt,y).$$
The important point is that the hyperbolic Laplacian \eqref{Laplace-Beltrami} is invariant under this type of transformations, so that $u_2^R$ is also a solution of \eqref{problem-extension}.

\begin{lemma}\label{lemma-rescaled}
For every $R>1$, $u_1\leq u_2^R$.
\end{lemma}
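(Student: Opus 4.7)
The plan is to apply the sliding method in the stretching parameter $R$, exploiting the fact that the map $(x',x_n)\mapsto(Rx',Rx_n)$ is an isometry of $\HH^n$ in the half-space model, so that $u_2^R$ is also a solution of the extension problem \eqref{1dimsol}. I will rely throughout on the uniform asymptotics $u_i(t,0)\to\pm 1$ as $t\to\infty$ and $t\to 0^+$ (with $t$ the ray parameter after the M\"obius normalization), together with the regularity estimates of Lemmas \ref{regNL} and \ref{regularity1}, which extend this control in the $y$-variable on compacts.

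The first step is to verify that $u_1\le u_2^R$ holds globally for all sufficiently large $R$. This follows by combining the fact that $u_2^R(t,\cdot)=u_2(Rt,\cdot)$ is uniformly close to $+1$ on any compact subset of $(0,\infty)\times[0,\infty)$ once $R$ is large, with the fact that $u_1(t,\cdot)$ is close to $-1$ on $\{t\le\eta\}$ for $y$ bounded, together with a maximum-principle argument (Lemma \ref{lemma-max1}) applied to the positive part $(u_1-u_2^R)^+$ to handle the region where both functions decay towards $0$ as $y\to\infty$. I then set
\[
R^*:=\inf\bigl\{R_0\ge 1\,:\,u_1\le u_2^R\text{ on }(0,\infty)\times[0,\infty)\text{ for every }R\ge R_0\bigr\},
\]
and aim to show $R^*=1$ by contradiction.

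Suppose $R^*>1$. By continuity in $R$, $u_1\le u_2^{R^*}$, and $w:=u_2^{R^*}-u_1\ge 0$ is a weak solution of a linear extension problem of the form \eqref{temp}, with boundary condition $-y^a\partial_y w|_{y=0}+d(t)\,w=0$, where
\[
d(t):=-\int_0^1 f'\bigl(s\,u_2^{R^*}(t,0)+(1-s)\,u_1(t,0)\bigr)\,ds
\]
is bounded and H\"older continuous. The normalization $u_1(1,0)=u_2(1,0)=0$, combined with the strict monotonicity \eqref{eq3} of $u_2(\cdot,0)$ at $t=1$, gives $w(1,0)=u_2(R^*,0)>0$, so $w\not\equiv 0$. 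The strong maximum principle (Remark \ref{MPweak}) and Hopf's lemma (Lemma \ref{lemma-hopf}), or equivalently Lemma \ref{lemma-max1}, then yield $w>0$ strictly on $\overline{(0,\infty)\times[0,\infty)}$.

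The main obstacle is the closing step: to contradict the definition of $R^*$, one must promote the strict positivity of $w$ to a uniform lower bound that survives small perturbations of $R$ near the three ``ends'' $t\to 0^+$, $t\to\infty$, and $y\to\infty$. Near the horizontal ends the coefficient $d(t)$ converges to $-f'(\pm 1)=F''(\pm 1)>0$ by the nondegeneracy hypothesis, so I would construct exponential subsolutions of the linearized extension problem on exterior strips $\{t\le\eta_0\}$ and $\{t\ge\eta_0^{-1}\}$ and invoke Lemma \ref{lemma-max1} (whose hypothesis $d\ge 0$ holds on these regions) to obtain a positive lower bound on $w$ there. The escape $y_n\to\infty$ is ruled out by exploiting the decay in $y$ of the Poisson kernel $\mathcal P^\gamma_y$ from Theorem \ref{thm-extension}, which transfers to a controlled decay of $w$ in $y$. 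Combined with interior compactness via the $\mathcal C^\beta$ estimates, these uniform lower bounds preclude the existence of a sequence $R_n\uparrow R^*$ and points $(t_n,y_n)$ with $u_1(t_n,y_n)>u_2^{R_n}(t_n,y_n)$, contradicting the infimum. Hence $R^*=1$, proving the lemma.
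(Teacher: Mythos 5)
Your overall strategy is the same stretching/sliding argument as the paper's: scale $u_2$ by $R$, show $u_1\le u_2^R$ for large $R$, and then slide $R$ down to $1$. The paper phrases this as showing that $\{R>1:v_R\ge 0\}$ is nonempty, open and closed in $(1,\infty)$, while you phrase it via an infimum $R^*$ and a contradiction; these are two standard formulations of the same connectedness argument. Your treatment of the ``ends'' (compact interval where positivity is preserved by continuity, plus the sign of $d$ outside it, fed into Lemma~\ref{lemma-max1}) is also what the paper does; the excursion into exponential subsolutions and Poisson-kernel decay in $y$ is a red herring, since Lemma~\ref{lemma-max1} already takes care of both the $t\to 0^+,\infty$ behaviour (via the condition $d\ge 0$ off $\Sigma$) and the $y\to\infty$ direction. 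So the mechanics of the closing step are fine, just over-decorated.

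There is, however, one genuine gap: the way you establish $w:=u_2^{R^*}-u_1\not\equiv 0$. You invoke \eqref{eq3}, strict monotonicity of $u_2(\cdot,0)$, to conclude $u_2(R^*,0)>u_2(1,0)=0$. But in the hypotheses under which Lemma~\ref{lemma-rescaled} is stated and used, $u_1,u_2$ are only assumed to satisfy \eqref{eq4} (the asymptotic limits) and $|u_i|<1$; monotonicity of the trace is precisely what is being \emph{derived} from this lemma (see Corollary~\ref{cor-monotonicity} and the end of the paper's Claim~4, where $0\le t\,\partial_t u$ is obtained by differentiating at $R=1$). So using \eqref{eq3} here is circular. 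The paper circumvents this by first running the argument with $u_1\equiv u_2$: in that case $v_R\equiv 0$ would force $u_1(Rt,0)=u_1(t,0)$ for all $t$, contradicting the two distinct limits in \eqref{eq4} without any appeal to monotonicity. Only after establishing strict monotonicity for a single solution does the paper return to the general case $u_1\not\equiv u_2$ (Claim~5), where the normalization at two distinct zeros $\tfrac1{R},1$ contradicts the now-proved monotonicity. To make your proof rigorous you would need to adopt this two-stage structure (or otherwise rule out $u_1\equiv u_2^{R^*}$ without \eqref{eq3}); the rest of your outline then goes through.
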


\begin{proof}
Let $v_R=u_2^R-u_1$, $v_R=v_R(t,y)$. It satisfies the equation
\begin{equation*} \begin{cases}
\partial_{yy} v_R+\frac{a}{y}\partial_y v_R + \Delta_{\mathbb H^n} v_R=0&\quad \text{in } \mathbb H^n\times \mathbb R_+,\\
-y^a \partial_y v_R|_{y=0}=d_R(v_R) &\quad \text{on }\mathbb H^n,
\end{cases} \end{equation*}
where
$$d_R(t)=\frac{f(u_2^R)-f(u_1)}{u_2^R-u_1}(t,0),$$
if $v_R(t,0)\neq 0$ and $d_R(t)=0$ otherwise. Note that $d_R$ is a bounded function since $f$ is Lipschitz.\\

\noindent \emph{Claim 1:} $v_R(t,y)>0$ for $R$ large enough.

By our hypothesis \eqref{eq4} on $u_i$, $i=1,2$, there exists a compact interval $[1/A,A]$ such that
$u_i(t,0)\in(-1,\tau)$ if $t\in(0,1/A]$ and $u_i(t,0)\in(\tau,1)$ if $t\in[A,\infty)$, for $i=1,2$. Take $R>0$ sufficiently large such that $v_R(t,0)>0$ for $t\in[1/A,A]$.  Setting
$$\Sigma:=[1/A,A]\cup\{t\in(0,\infty) : v_R(t,0)>0\},$$
the claim is proved thanks to Lemma \ref{lemma-max1}, applied in the new coordinates.\\
\noindent\emph{Claim 2:} Assume that $u_1\equiv u_2$. If $R>1$ and $v_R\geq 0$, then $v_R\not\equiv 0$.

To show this, suppose that there exists $R>1$ such that $v_R\equiv 0$, i.e., $u_1(Rt)=u_1(t)$ for all $t\in(0,\infty)$. This is a contradiction with the fact that
 $$\lim_{t\to 0} u_1(t,0)=-1\quad \text{and}\quad \lim_{t\to +\infty} u_1(t,0)=+1.$$
 \noindent\emph{Claim 3:} Assume that $u_1\equiv u_2$. If $v_R\geq 0$ for some $R>1$, then $v_{R+\mu}\geq 0$ for every $\mu$ small enough (with $\mu$ either positive or negative).

By Hopf's maximum principle from Lemma \ref{lemma-max} and the previous claim, we must have $v_R>0$. Let $K_R$ be a compact interval in $(0,\infty)$ such that, for $t\not\in K_R$, $|u_1(t,0)|>1-\tau/2$ and $|u_1^R(t,0)|>1-\tau/2$. By continuity and the existence of limits, we have that if $|\mu|$ is small enough, then $v_{R+\mu}(t,0)>0$ for $t\in K_R$ and $|u_1^{R+\mu}|>1-\tau$. Hence we can apply
Lemma \ref{lemma-max1} again  to $v_{R+\mu}$ and $\Sigma=K_R$ in order to prove the claim.\\
\noindent\emph{Claim 4:} Proof of Lemma \ref{lemma-rescaled} is completed in the case $u_1\equiv u_2$.

Indeed, these three claims that  $\{R>1 : v_R\geq 0\}$ is a nonempty, closed and open set in $(1,\infty)$, and hence equal to the whole interval. This completes the proof of the lemma. In addition, we obtain that for every solution $u_1$,
$$0\leq\left.\frac{d}{dR}\right|_{R=1} u_1(Rt,y)=t\partial_t u(t,y),$$
which gives that $u_1(t,y)$ is non-decreasing in $t$.
By the strong maximum principle, $u_1(t,y)$ is also strictly increasing in $t$.\\

\noindent\emph{Claim 5:} Lemma \ref{lemma-rescaled} is also true when $u_1\not\equiv u_2$.

Indeed, we just need to reprove Claim 2 and the rest of the claims will follow similarly. Assume, by contradiction, that $u_1\equiv u_2^R$ for some $R>1$. Then
$$u_1\lp\tfrac{1}{R},0\rp=u_2^R\lp\tfrac{1}{R},0\rp=u_2(1,0)=0$$
by our normalization \eqref{normalization}. This gives that both points $\lp\frac{1}{R},0\rp$ and $(1,0)$ are zeroes of $u_1$. Contradiction with the fact that $u_1$ is strictly increasing in $t$.

\end{proof}

An important consequence of the Lemma is the following monotonicity result:

\begin{cor}
\label{cor-monotonicity}
Assume that $u(t,y)$ is a solution of \eqref{1dimsol} satisfying \eqref{eq3}-\eqref{eq4}. Then $u$ is increasing in $t$ for every fixed $y$.
\end{cor}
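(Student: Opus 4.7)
The plan is to read the corollary off from the stretching argument used in Lemma~\ref{lemma-rescaled}, specialized to the degenerate case in which both solutions coincide. Taking $u_1 = u_2 = u$ in that proof, hypotheses \eqref{eq3}--\eqref{eq4} supply exactly the ingredients used in Claims~1--3 (asymptotic limits on the boundary and enough positivity to start the argument for large $R$), so no part of that chain requires genuinely distinct solutions. We therefore obtain
$$v_R(t,y) := u(Rt,y) - u(t,y) \geq 0 \quad\text{on } \mathbb{H}^n \times \mathbb{R}_+$$
for every $R > 1$, after transplanting $u$ to the half-space model so that $t > 0$ parametrizes the distinguished ray $\Lambda$. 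Differentiating at $R = 1^+$ yields $t\,\partial_t u(t,y) \geq 0$ in these coordinates. Since the reparametrization between the ray parameter and the signed-distance variable is a strictly monotone diffeomorphism, this transfers to $\partial_t u(\cdot,y) \geq 0$ in the original signed-distance coordinate for every fixed $y \geq 0$.

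To upgrade to strict monotonicity, set $w := \partial_t u$ and differentiate \eqref{1dimsol} with respect to $t$. Using $[\partial_t, H_a] = (n-1)\sech^2 t\,\partial_t$, one finds
$$H_a w + (n-1)\sech^2 t\, w = 0 \quad\text{in } \mathbb{R}\times\mathbb{R}_+, \qquad -y^a \partial_y w\big|_{y=0} = f'(u(t,0))\, w.$$
Since $w \geq 0$ and $\sech^2 t > 0$, we have $H_a w \leq 0$, so $w$ is a nonnegative supersolution of $H_a$. Hypothesis \eqref{eq3} gives $w(t,0) > 0$ for every $t \in \mathbb{R}$, so $w$ cannot vanish identically. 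If $w$ vanished at any interior point $(t_0,y_0)$ with $y_0 > 0$, the strong maximum principle for $H_a$ (see Remark~\ref{MPweak}) would force $w \equiv 0$, contradicting \eqref{eq3}. Hence $w > 0$ throughout $\mathbb{R}\times\mathbb{R}_+$, and $u$ is strictly increasing in $t$ for every fixed $y$.

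The only substantive point to verify is that the stretching argument of Lemma~\ref{lemma-rescaled} genuinely does not use the distinctness of $u_1$ and $u_2$ in Claims~1--3; the only place where $u_1 \not\equiv u_2$ entered was in Claim~5, which is irrelevant here. With that check in hand the proof is essentially immediate, and the remaining work is the standard strong-maximum-principle/Hopf step for the linearized problem, which is already covered by the results recalled in Section~\ref{subsection:maximum}.
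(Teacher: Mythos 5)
Your proof is correct and follows essentially the same route as the paper, which derives $t\,\partial_t u \geq 0$ by differentiating the inequality $u(Rt,y)\geq u(t,y)$ at $R=1^+$ (this is precisely the byproduct stated in Claim~4 of the proof of Lemma~\ref{lemma-rescaled}) and then cites the strong maximum principle for the strict inequality. You correctly observe that the $u_1\equiv u_2$ branch of that argument is self-contained, and you spell out the commutator computation and the SMP step for $w=\partial_t u$ in more detail than the paper does.
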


\begin{cor}
If $u_1$ and $u_2$ are solutions of \eqref{1dimsol} satisfying \eqref{eq3}-\eqref{eq4}, then they must coincide up to stretching. In particular, there is a unique solution that satisfies $u(0,0)=0$.
\end{cor}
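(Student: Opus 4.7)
My plan is to derive the corollary directly from Lemma~\ref{lemma-rescaled} by invoking it symmetrically in the roles of $u_1$ and $u_2$ and then sending $R\to 1^+$. First, Corollary~\ref{cor-monotonicity} guarantees that each $u_i(\cdot,0)$ is strictly increasing with limits $\pm 1$, so each has a unique zero on $\{y=0\}$. Working in the half-space picture set up before Lemma~\ref{lemma-rescaled}, I choose the stretching factor $\lambda > 0$ that sends the zero of $u_2$ to the zero of $u_1$, and replace $u_2$ by $\tilde u_2(t,y) := u_2(\lambda t, y)$. This $\tilde u_2$ remains a solution of \eqref{1dimsol} because stretching is a hyperbolic isometry, and after this single adjustment the pair $(u_1, \tilde u_2)$ satisfies the common normalization \eqref{normalization}.

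With this normalization in place, Lemma~\ref{lemma-rescaled} yields $u_1(t,y) \le \tilde u_2(Rt,y)$ for every $R>1$, and by exchanging the roles of $u_1$ and $\tilde u_2$---the hypotheses of the lemma are symmetric in the two solutions---the same lemma gives $\tilde u_2(t,y) \le u_1(Rt,y)$ for every $R>1$. Using the continuity of $u_1$ and $\tilde u_2$ (from Lemma~\ref{regularity1}), I send $R\to 1^+$ in both inequalities to obtain $u_1 \equiv \tilde u_2$. Hence $u_1$ and $u_2$ coincide up to the stretching by $\lambda$, which is exactly the first assertion.

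For the second assertion, existence of a layer solution normalized by $u(0,0)=0$ follows from Proposition~\ref{keyexist}, which produces a one-dimensional layer solution with $u(t_0,0)=0$ for some $t_0$; a suitable stretching moves this zero to the prescribed base point, since the one-parameter family of stretchings acts transitively on the possible locations of the zero along the reference geodesic. Uniqueness is then immediate from the first assertion: any other solution with the same normalization differs from the first by a stretching, and the only stretching that fixes the location of the zero of $u(\cdot,0)$ is the identity, because stretching acts non-trivially on this zero.

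The entire argument is a routine consequence of Lemma~\ref{lemma-rescaled}; the only genuine verifications are that its hypotheses are symmetric in $u_1$ and $u_2$ (so that it can be applied with their roles exchanged) and that continuity of the solutions permits the passage $R\to 1^+$. Both are straightforward, and there is no serious obstacle once the stretching comparison lemma is available.
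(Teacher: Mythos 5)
Your proof is correct and follows essentially the same route as the paper's: rescale (stretch) to impose the common normalization \eqref{normalization}, apply Lemma~\ref{lemma-rescaled} with the roles of the two solutions exchanged, and send $R\to 1^+$. You simply make explicit what the paper leaves implicit, namely the symmetric application of the lemma to obtain both inequalities and the observation that only the identity stretching preserves the location of the unique zero.
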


\begin{proof}
First, we may rescale both $u_1,u_2$ so that they satisfy the normalization condition \eqref{normalization}. Then take $R\to 1$ in Lemma \ref{lemma-rescaled}.
\end{proof}

\section{Two-dimensional symmetry}

In this section we provide the proof of Theorem \ref{thm-symmetry}. The geometric method was developed in \cite{Birindelli-Mazzeo}, and here we adapt it for the fractional case adding the $y$ variable. Let $P_-\in S_-$ and $P_+\in S_+$ be arbitrary points. Let us show first that  $u$ is non-decreasing along any curve of constant geodesic curvature joining $P_-$ to $P_+$. Then it is easy to show that $u$ is a function of only two variables. \\

\noindent \emph{Claim 1: } Let  $\sigma(t)$ be any curve of constant geodesic curvature in $\mathbb H^n$ such that $\lim_{t\to\pm \infty} \sigma(t)=P_{\pm}$. Then $u(\sigma(t),y)$ is non-decreasing in $t$ for fixed $y$.

To prove the claim, we work on the upper half-space model as in the previous section. Compose with a M\"obius transformation so that $P_-=0$ and $P_+=\infty$. The images of $S_-$ and $S_+$  are then some ball in $\mathbb R^{n-1}$ containing 0, but not necessarily at its center, and the exterior of this ball (union $\infty$), respectively. Moreover, the curve $\sigma$ is transformed into some ray $\Lambda$ in the upper half-space emanating from $0$.

Fix $\tau>0$ such that $f'(s)>0$ when $s\in[-1,-1+\tau)\cup (1-\tau,1]$ and choose $A\in(0,1)$ so that $u(x)<-1+\tau$ for $x\in D_-(A):=\{x\in \mathbb H^n : |x|<A\}$ and $u(x)>1-\tau$ for $x\in D_+(A):=\{x\in \mathbb H^n : |x|>1/A\}$.

Consider the rescaling $u_R(x)=u(Rx)$. The rest of the proof follows the same lines as in Lemma~\ref{lemma-rescaled} and Corollary~\ref{cor-monotonicity}. \\

\noindent \emph{Claim 2:} $u$ is a function of just two variables.

Working again on the ball model, suppose that $S_-$ and $S_+$ are the lower and upper hemispheres of the boundary, respectively. Let $P$ be any point in the interior of the ball, and let $\Pi$ be the spherical cap which passes through $P$ and $\overline{S_-}\cap \overline{S_+}$. Let $\pi$ be any two-dimensional plane passing through the origin of the ball and the point $P$. Then $\pi\cap \Pi$ is a curve $\gamma$ of constant geodesic curvature in $\mathbb H^n$ passing through $P$ and limiting on two points $Q,Q'\in S_-\cap S_+$. It is easy to see geometrically, that we can approximate $\gamma$ by two sequences of curves of constant geodesic curvature $\gamma_j^-(t)$ and $\gamma_j^+(t)$ such that $\gamma_j^{\pm}(0)=P$ for all $j$,
$$\lim_{t\in -\infty} \gamma_j^-(t),\lim_{t\to +\infty}\gamma_j^+(t)\in S_-,\quad
\lim_{t\in +\infty} \gamma_j^-(t),\lim_{t\to -\infty}\gamma_j^+(t)\in S_+,$$
and
$$T_P \Pi\ni X=\lim_{j\to\infty}(\gamma_j^-)'(0)=-\lim_{j\to\infty} (\gamma_j^+)'(0).$$
Since $u(\gamma_j^-(t))$ and $u(\gamma_j^+(t))$ are both nondecreasing by the previous claim, we see that $\nabla u_P \cdot X=0$. However, $X$ can be chosen arbitrarily in $T_P \Pi$, which shows that $\nabla u(P)$ is orthogonal to $\Pi$.

We have now proved that if $\{\Pi_t\}$ is the foliation of $\mathbb H^n$ by hypersurfaces which are of (signed) distance $t$ from the totally geodesic copy of $\mathbb H^{n-1}$ with boundary $S_-\cap S_+$, then each $\Pi_t$ is a level set of $u$. In other words, $u$ is a function of the distance $t$ and the coordinate $y$ alone, as desired.\\

Once we know that $u$ only depends on the $t$ variable in the horizontal direction, then it must precisely be the one found in Theorem \ref{thm-one-dimensional}. Proof of Theorem \ref{thm-symmetry} is completed.

\qed


\section{Hamiltonian estimates}\label{section-Hamiltonian}

Here we show that there exists a Hamiltonian quantity that, although it does not remain constant along the trajectories as in the Euclidean case, it does decrease to zero when $t\to +\infty$. Let $u$ be a solution of
\begin{equation}\label{problem2}
\begin{cases}
H_a u = 0 &\quad\hbox{for } (t,y)\in \mathbb R\times \rr_+,\\
-d_\gamma y^a \partial_y u |_{y=0}=f(u) &\quad \hbox{for } y\in \mathbb R,
\end{cases}
\end{equation}
where the operator $H_a$ is defined in \eqref{operatorH} and the constant $d_\gamma$ in \eqref{constant-d}.
Let
\be\label{V}V(t)=\frac{1}{2}\int_0^\infty y^a \lp (\partial_t u)^2-(\partial_y u)^2\rp\,dy-\frac{1}{d_\gamma}\lp F(u(t,0))-F(1)\rp.\ee
Differentiating in the variable $t$
$$V'(t)=\int_0^\infty y^a (\partial_t u \partial_{tt} u-u_y u_{yt})dy+\frac{1}{d_\gamma} f(u(t,0)) u_t(t,0).$$
After integration by parts, taking into account the second equation in \eqref{problem2} and the decay at infinity for the boundary terms, we get that
$$V'(t)=\int_0^\infty y^a \partial_t u \partial_{tt} u+(y^a \partial_y u)_y \partial_t u\,dy.$$
Next, the first equation in \eqref{problem2} allows to rewrite
\be\label{V'}V'(t)=-(n-1)\int_0^\infty y^a \tanh t (\partial_t u)^2 \,dy.\ee
In particular, this shows that $V$ is decreasing when $t>0$ and increasing when $t<0$, attaining the maximum at $t=0$. On the other hand, $V(+\infty)=V(-\infty)=0$.

 After some calculations, one may check that
 $$\lp\cosh^{n-1}t V(t)\rp_t=(\cosh^{n-1}t)_t \left[-\frac{1}{2}\int_0^\infty y^a [(\partial_t u)^2+(\partial_y u)^2]\,dy-\frac{1}{d_\gamma} \lp F(u(t,0))-F(1)\rp\right].$$

We have proved:

\begin{prop}
The Hamiltonian energy \eqref{V} is decreasing to zero along the trajectories when $t\to +\infty$.
\end{prop}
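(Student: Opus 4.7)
The plan is to decompose the statement into two parts: (i) that $V(t)$ is monotonically non-increasing for $t>0$, and (ii) that $\lim_{t\to+\infty}V(t)=0$. The first part is essentially already done: formula \eqref{V'} gives $V'(t)=-(n-1)\tanh t\int_0^\infty y^a(\partial_t u)^2\,dy\le 0$ for $t>0$, so monotonicity is immediate and needs no further argument.

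The substantive work lies in identifying the limit at $+\infty$. I would split $V(t)$ into its potential part $-\frac{1}{d_\gamma}(F(u(t,0))-F(1))$ and its gradient part $\frac{1}{2}\int_0^\infty y^a((\partial_t u)^2-(\partial_y u)^2)\,dy$. The potential part tends to zero since $u(t,0)\to 1$ as $t\to+\infty$ by Theorem~\ref{thm-one-dimensional} and $F$ is continuous with $F$ attaining the value $F(1)$ there. For the gradient part I would use a sliding/compactness argument in the spirit of Lemma~\ref{lemma-gradient}: given $t_n\to+\infty$, define $u_n(t,y)=u(t+t_n,y)$, which satisfies the shifted PDE with coefficient $(n-1)\tanh(t+t_n)$ in the drift, together with the boundary condition $-y^a\partial_y u_n|_{y=0}=f(u_n)$. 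The uniform Hölder estimates of Lemmas~\ref{regNL} and~\ref{regularity1} supply equicontinuity, and extracting a subsequence yields a $C^2_{\mathrm{loc}}$ limit $\bar u$ that solves
\[
\partial_{yy}\bar u+\tfrac{a}{y}\partial_y \bar u+\partial_{tt}\bar u+(n-1)\partial_t\bar u=0,\qquad -y^a\partial_y\bar u|_{y=0}=f(\bar u),
\]
with boundary trace $\bar u(t,0)\equiv 1$. Since $f(1)=0$ and $\bar u(t,0)\equiv 1$, the unique bounded solution of the associated extension problem (Theorem~\ref{thm-extension}, reformulated on this asymptotic geometry) is $\bar u\equiv 1$, so $\partial_t u_n,\partial_y u_n\to 0$ locally uniformly.

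The main obstacle is upgrading this local convergence of derivatives to convergence of the weighted integrals $\int_0^\infty y^a(\partial_t u)^2\,dy$ and $\int_0^\infty y^a(\partial_y u)^2\,dy$; here one has to control the behaviour of $u$ both as $y\to 0$ (where $y^a$ is singular when $a<0$) and as $y\to\infty$. To handle the tail in $y$, I would build a barrier using the Poisson representation \eqref{Poisson-extension} together with the kernel decay estimates of Theorem~\ref{thm-extension} and Theorem~\ref{thm:singular-integral}, comparing $1-u(t,y)$ to an explicit supersolution that decays in $y$ uniformly for $t$ large; this gives a dominating function for $y^a|\nabla u|^2$ integrable in $y$ uniformly in $t\gg 1$, so dominated convergence applies. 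An independent, more compact alternative is to integrate the identity \eqref{V'} directly: since $V'\le 0$ on $(0,\infty)$ and $V$ is bounded below (use $|u|<1$ and $F\ge F(1)$ to bound $V$ from below by $-\frac{1}{2}\int_0^\infty y^a(\partial_y u)^2\,dy$, which is finite by the trace-Sobolev inequality of Theorem~\ref{thm:trace-Sobolev}), the limit $L=\lim_{t\to+\infty}V(t)$ exists and the integral $\int_0^\infty\tanh t\int_0^\infty y^a(\partial_t u)^2\,dy\,dt$ is finite; combined with the identification of the pointwise/local limit via the sliding argument, this forces $L=0$. Either route closes the proof of the proposition.
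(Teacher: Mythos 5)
Your proposal matches the paper's proof exactly as far as the paper actually goes: the paper simply computes $V'(t)$ via the identity \eqref{V'}, reads off the monotonicity for $t>0$, and then \emph{asserts} $V(\pm\infty)=0$ without further justification. So part (i) of your plan and the paper's argument coincide; part (ii) is where you are supplying detail that the paper omits, and you have correctly identified that the nontrivial point is upgrading local $\mathcal C^2$-convergence of the shifted solutions $u(\cdot+t_n,\cdot)\to 1$ to convergence of the $y$-weighted integrals $\int_0^\infty y^a(\partial_t u)^2\,dy$ and $\int_0^\infty y^a(\partial_y u)^2\,dy$ to zero.

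However, both of the ways you propose to close that gap have problems as written. The barrier route is the right idea in spirit, but you would actually have to construct the supersolution and verify that it dominates $y^a|\nabla u|^2$ integrably and uniformly for $t\gg 1$; this is real work, not a one-line appeal to the kernel estimates, because those estimates are stated in terms of the hyperbolic distance $\rho$ and the $y$-decay of $\mathcal P^\gamma_y$, not directly of $1-u(t,y)$. The ``compact alternative'' has a concrete flaw: Theorem~\ref{thm:trace-Sobolev} is an inequality for the full $(n+1)$-dimensional Dirichlet energy $\int y^a|\nabla u|^2\,dV_{\HH^n}\,dy$ (which is in fact infinite for a layer), and it gives no control on the one-dimensional slice integral $\int_0^\infty y^a(\partial_y u)^2(t,y)\,dy$ for a fixed $t$, let alone a bound uniform in $t$. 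Thus you have not actually established that $V$ is bounded below, and even granted that, the argument still circles back to the sliding argument to pin down the value of the limit, so it is not an independent route. In short: your approach is the same as the paper's, you go further than the paper in trying to justify $V(+\infty)=0$, and you have honestly flagged the genuine gap; but the specific patches you offer for it do not yet close it.
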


\section{Convergence of the layers when $\gamma\to 1$}

Here we prove Theorem \ref{thm-limit}. For simplicity of the notation, we drop the subindex $k$ and just denote the sequence by $\{w_\gamma\}$ when $\gamma\to 1$. Let $u_\gamma$ be the extension of $w_\gamma$ to $\mathbb H^n\times \mathbb R^+$, i.e., $u_\gamma$ is the solution of
\begin{equation*}\label{problem1}
\begin{cases}
\partial_{yy} u_\gamma+\frac{a}{y}\partial_y u_\gamma+\Delta_{\mathbb H^n} u_\gamma = 0 &\quad\hbox{for } (x,y)\in \hh^n\times \rr_+,\\
-d_\gamma y^a \partial_y u_\gamma |_{y=0}=f(u_\gamma)& \quad \hbox{for } x\in \hh^n,
\end{cases}
\end{equation*}
Note that we have been very careful with the multiplicative constant  in front of the nonlinearity, whose value is precisely given in \eqref{constant-d}.
As it was shown in \cite{Cabre-Sire:I}, for $a=1-2\gamma$,
\be\label{limits-gamma}\frac{d_\gamma}{(1-a)^{-1}}\to 1 \quad\text{as}\quad\gamma\downarrow 0\qquad \text{and}
\qquad \frac{d_\gamma}{1+a}\to 1 \quad\text{as}\quad\gamma\uparrow 1.\ee

The existence of a limit $w_\gamma\to\overline w$ follows exactly the arguments in Section 6 of \cite{Cabre-Sire:I} using our uniform estimates from Section~\ref{subsection-regularity} and we will refer the reader to that paper. By appropriate stretching, we may assume that $w_\gamma(0)=0$.   It is clear from the arguments  of \cite{Cabre-Sire:I}  that $\overline w$ satisfies equation \eqref{layer1},
$$\overline w(0)=0\quad \text{and} \quad \overline w'\geq 0.$$
As a consequence, the function $\overline w$ admits limits at $\pm\infty$,
$$\lim_{t\to\pm\infty} \overline w(t)=L^{\pm}\in[-1,1].$$

Now we need to prove that $\overline w$ is indeed a layer, i.e., $L^\pm=\pm 1$. In \cite{Cabre-Sire:I} the authors use a very sharp Hamiltonian estimate. However, we have found that is enough to have the results from Section \ref{section-Hamiltonian}. In view of \eqref{Laplacian1}, $\overline w$ is a solution of
$$-\partial_{tt} \overline w -(n-1)\tanh t \,\partial_t \overline w=f(\overline w).$$
Multiply the above equation by $\partial_t \overline w$ and integrate it over the interval $(t,\infty)$. We obtain
\be\label{formula4}\frac{1}{2}(\partial_t \overline w) ^2-(n-1)\int_t^\infty (\tanh s)\,(\partial_s\overline w)^2 ds=-F(L^+)+F(\overline w(t)).\ee
On the other hand, by the passage to the limit justified in \cite{Cabre-Sire:I},
\be\label{formula1}\lim_{\gamma\to 1} (1+a)\int_0^\infty y^a (\partial_t u_\gamma)^2 dy=(\partial_t \overline w)^2.\ee
For the same reason, and using expression \eqref{V'},
\begin{equation}\label{formula2}\begin{split}
\lim_{\gamma\to 1} (1+a)V_\gamma(t)&=-\lim_{\gamma\to 1}(1+a)\int_t^\infty V_\gamma'(s)ds\\
&=(n-1)(1+a)\lim_{\gamma\to 1}\int_0^\infty \int_t^\infty y^a (\tanh s) (\partial_s u_\gamma)^2\,ds\,dy \\
& = (n-1)\int_t^\infty (\tanh s) (\partial_s \overline w)^2\,ds
\end{split}\end{equation}
Next, from formula \eqref{V} we deduce that
\begin{equation*}\begin{split}
\frac{1}{2}\int_0^\infty y^a (\partial_t u_\gamma)^2 dy-V_\gamma(t)&=\frac{1}{2}\int_0^\infty y^a(\partial_y u_\gamma)^2\,dy+\frac{1}{d_\gamma} \left[F(u_\gamma(t,0))-F(1)\right] \\
&\geq \frac{1}{d_\gamma} \left[F(u_\gamma(t,0))-F(1)\right]
\end{split}\end{equation*}
Therefore, passing to the limit $\gamma\to 1$ in the previous expression, and substituting \eqref{formula1} and \eqref{formula2}, we arrive at
\begin{equation}\label{formula3}
\frac{1}{2}(\partial_t\overline w)^2-\frac{(n-1)}{1+a}\int_t^\infty (\tanh s)(\partial_s\overline w)^2 \geq F(\overline w(t))-F(1),
\end{equation}
where we have also used the asymptotic behavior \eqref{limits-gamma}. Putting together expressions \eqref{formula4} and \eqref{formula3} we conclude that
$$F(L^+)\leq F(1).$$
Because $L^+\geq 0$ and the initial hypothesis on our double well potential $F$, we must have that $L^+=1$, as desired. In the same way, we prove that $L^-=-1$. Hence, $\overline w$ is the layer solution connecting $-1$ to $1$ with $\overline w(0)=0$. Uniqueness of this $\overline w$ follows from \cite{Birindelli-Mazzeo}.

\section{Multilayer solutions}

Here we provide the proof of Theorem \ref{thm-multilayer}. First remark that \cite{Mazzeo-Saez} gives the construction of a multilayer solution in the $\gamma=1$ case, call it $u_1$. For $\gamma$ close to one, we use a perturbation argument in the exponent $\gamma$  that was introduced in \cite{Gonzalez-Mazzeo-Sire}.

For each $\Pi_j$, choose a hyperbolic isometry $\varphi_j$ which carries $\Pi_j$ to a fixed totally
geodesic hyperplane $\Pi$. We consider weighted H\"older spaces
\[
\calC^{k,\alpha}_{\mu,\delta} (\HH^n,\calH) := \sech(\mu \tau) \rho^\delta \calC^{k,\alpha}(\HH^n) =
\{ u = \sech(\mu \tau) \rho^\delta \tilde{u}: \tilde{u} \in \calC^{k,\alpha}(\HH^n) \},
\]
where the function $\tau$ is a  smoothing of the signed distance function from the union of the hyperplanes $\Pi_j$ and $\rho$ is defined below.

 Consider the one layer case $\Pi$. Let $\rho_0$ be a boundary defining function for $\Pi$, this function is strictly positive on $\overline{\mathbb H^n} \backslash (\mathbb S^{n-1}\cap \overline \Pi)$.  Let $\hat{\chi}$ be a smooth nonnegative cutoff function which equals $1$ on a neighborhood of $\overline{\Pi} \subset
\overline{\HH^n}$ and which vanishes outside a slightly larger neighborhood.
 Then we take the  function $\rho$ as
 \[
\rho = \sum_{j=1}^N \varphi_j^*( \hat{\chi} \rho_0 ) + \sum_{j=1}^N \varphi_j^*( 1 - \hat{\chi});
\]
it agrees with the pullback $\varphi_j^* (\hat{\chi} \rho_0)$ near $\overline{\Pi_j}$ and is strictly positive elsewhere on
the closure of $\HH^n$. For further details on these definitions we refer to \cite{Mazzeo-Saez}.

We consider now, for each $\gamma\in(0,1]$, the linearization of problem \eqref{initial-problem}\quad  $u \mapsto (-\Delta_\HH)^\gamma u - f(u)$ around the solution ${u_1}$ for the $\gamma=1$ case. It is given by the operator
\[
v \mapsto L_\gamma v := (-\Delta_\HH)^\gamma v - f'({u_1})v.
\]
Define \[
- \beta_\pm = -\frac{n-1}{2} - \sqrt{ \frac{(n-1)^2}{4} + f'(\pm1)}\, .
\]
and $\beta=\min\{\beta_+, \beta_-\}$.
 It is proved  \cite{Mazzeo-Saez} that the mapping
\[
L_1 : \calC^{2,\alpha}_{\mu, \delta}(\HH^n, \calH) \longrightarrow \calC^{0,\alpha}_{\mu, \delta}(\HH^n,\calH)
\]
is surjective for $\mu \in (0, \beta)$ and $\delta\in \left(0,\frac{n-2}{2}\right)$.

We claim that for $\gamma$ sufficiently close to $1$, and for $\mu \in (0,\beta)$, where $\beta > 0$
is some small fixed number, the mapping
\[L_\gamma: \calC^{2,\alpha}_{\mu, \delta}(\HH^n, \calH) \longrightarrow \calC^{0,\alpha+2(1-\gamma)}_{\mu-2\gamma,  \delta}(\HH^n,\calH)\]
is also bounded and surjective.  The assertion about the boundedness of $L_\gamma$
is clearly true for $\gamma = 0, 1$, and hence by interpolation is true for all $\gamma$ close to $1$.
Note that $L_\gamma$ is a pseudodifferential edge operator of order $2\gamma$. Then from \cite{Mazzeo:edge-operators}, one can see that $L_\gamma$ is Fredholm,
and since it is surjective at $\gamma = 1$, it must remain surjective for values of $\gamma$ which are close to $1$. We write its right inverse as $G_\gamma$.

Next, consider the mapping
\[
(\gamma, v) \longmapsto N(\gamma,v) := G_\gamma
[(-\Delta_{\mathbb H^n})^\gamma (u_1 + v) - f(u_1+v)].
\]
It is clear that $N(1,0) = 0$.
Let $v$ lie in a ball of radius $\e$ about $0$ in the space $\mathcal C^{2,\alpha}_{\mu,\delta}$. Clearly $\left. D_v N\right|_{(1,0)} = G_1 L_1 = \mbox{Id}$. The implicit function theorem now
applies to show that for every $\gamma$ near to $1$, there exists a unique $v_\gamma \in \mathcal C^{2,\alpha}_{\mu,\delta}$
with norm less than $\e$ such that
$u_\gamma = u_1+ v_\gamma$ is a solution of our problem. The proof of Theorem \ref{thm-multilayer} is completed.\\


\textbf{Acknowledgements:} The authors acknowledge the hospitality of Universitat Polit\`ecnica de Catalunya where part of this work was carried out.



\begin{thebibliography}{AAA}

\bibitem{Almeida-Ge}
L. Almeida and Y. Ge.
\newblock Symmetry results for positive solutions of some elliptic equations on
  manifolds.
\newblock {\em Ann. Global Anal. Geom.}, 18(2):153--170, 2000.


\bibitem{Ambrosio-Cabre}
L. Ambrosio and X. Cabr\'e
\newblock Entire solutions of semilinear elliptic equations in $\RR^3$ and a conjecture of De Giorgi.
\newblock {\em J. Amer. Math. Soc.}, 13 725-739, 2000.


\bibitem{Anker:multipliers}
J.-P. Anker.
\newblock {${\bf L}_p$} {F}ourier multipliers on {R}iemannian symmetric spaces
  of the noncompact type.
\newblock {\em Ann. of Math. (2)}, 132(3):597--628, 1990.

\bibitem{Banica-Gonzalez-Saez}
\newblock{V. Banica, M.d.M. Gonz\'alez and M. S\'aez.}
\newblock Some constructions for the fractional Laplacian on noncompact manifolds.
\newblock {\em Preprint}, arXiv:1212.3109.


\bibitem{Berchio-Ferrero-Grillo}
\newblock{E. Berchio, A. Ferrero and G. Grillo.}
\newblock Stability and qualitative properties of radial solutions of the {L}ane-{E}mden-{F}owler equation on {R}iemannian models.
\newblock {\em Preprint},  arXiv:1211.2762.

\bibitem{hamel}
\newblock{H. Berestycki, F. Hamel and R. Monneau.}
\newblock One-dimensional symmetry of bounded entire solutions of some elliptic equations.
\newblock {\em Duke Math. J.} 103 (2000), no. 3, 375–396.



\bibitem{Birindelli-Mazzeo}
I. Birindelli and R. Mazzeo.
\newblock Symmetry for solutions of two-phase semilinear elliptic equations on
  hyperbolic space.
\newblock {\em Indiana Univ. Math. J.}, 58(5):2347--2368, 2009.

\bibitem{Bonforte-Gazzola-Grillo-Vazquez}
M. Bonforte, F. Gazzola, G. Grillo and J. L. V\'azquez.
\newblock Classification of radial solutions to the {E}mden-{F}owler equation on the hyperbolic space.
\newblock To appear in {\em Calc. Var. and PDE}.


\bibitem{Caffarelli-Silvestre}  L. A. Caffarelli and L. E. Silvestre.
\newblock An extension problem related to the fractional {L}aplacian.
{\em Communications in Partial Differential Equations}, 32 (2007) 8, 1245.

\bibitem{Cabre-Sire:I}
X.~Cabr{\'e} and Y.~Sire.
\newblock  Non-linear equations for fractional {L}aplacians {I}: regularity,
  maximum principles and {H}amiltoniam estimates.
\newblock  {\em Preprint}, arXiv:1012.0867.

\bibitem{Cabre-Sire:II}
X.~Cabr{\'e} and Y.~Sire.
\newblock  Non-linear equations for fractional {L}aplacians {II}: existence,
uniqueness, and qualitative properties of solutions.
\newblock  {\em Preprint}, arXiv:1111.0796.

\bibitem{Cabre-SolaMorales}
X.~Cabr{\'e} and J.~Sol{\`a}-Morales.
\newblock  Layer solutions in a half-space for boundary reactions.
\newblock {\em Comm. Pure Appl. Math.}, 58(12):1678--1732, 2005.

\bibitem{Castorina-Fabbri-Mancini-Sandeep}
D.~Castorina, I.~Fabbri, G.~Mancini, and K.~Sandeep.
\newblock Hardy-{S}obolev extremals, hyperbolic symmetry and scalar curvature
  equations.
\newblock {\em J. Differential Equations}, 246(3):1187--1206, 2009.

\bibitem{Chang-Gonzalez}
A. Chang and M.d.M. Gonzalez.
\newblock Fracional Laplacian in conformal geometry.
\newblock {\em Advances in Mathematics}, 226(2), (2011), 1410--1432.

\bibitem{De Giorgi}
E. De Giorgi,
\newblock {\em Convergence problems for functionals and operators, }
\newblock Proc. Int. Meeting on
Recent Methods in Nonlinear Analysis (Rome, 1978), 131--188, Pitagora, Bologna (1979).




\bibitem{delpino} M. del Pino, M. Kowalczyk and J. Wei.
On De Giorgi's conjecture in dimension $N \geq 9$.
\newblock On De Giorgi's conjecture in dimension $N \geq 9$.
\newblock {\em Ann. of Math.} (2) 174 (2011), no. 3, 1485–1569.

\bibitem{PKPW} M. del Pino, M. Kowalczyk, F. Pacard and J. Wei.
\newblock Multiple-end solutions to the Allen-Cahn equation in $\RR^2$,
\newblock {\it J. Func. Anal.} {\bf 258} (2010), 458-503.



\bibitem{FKS}
E. Fabes, C. Kenig and R. Serapioni.
\newblock The local regularity of solutions of degenerate elliptic equations.
\newblock {\em Comm. Partial Differential Equations} 7 (1982), no. 1, 77--116.

\bibitem{Farina-Sire-Valdinoci:I}
A. Farina, Y. Sire, and E. Valdinoci.
\newblock Stable solutions of elliptic equations on
{R}iemannian manifolds.
\newblock To appear in {\em Jour. Geom. Anal.}

\bibitem{Farina-Sire-Valdinoci:II}
A. Farina, Y. Sire, and E. Valdinoci.
\newblock Stable solutions of elliptic equations on {R}iemannian manifolds with
  {E}uclidean coverings.
\newblock {\em Proc. Amer. Math. Soc.}, 140(3):927--930, 2012.


\bibitem{GGG} I.M.  Gelfand, S.G. Gindikin, I.M. Graev,  \newblock{\em Selected topics in integral geometry. }  \newblock Translations of Mathematical Monographs, 220. American Mathematical Society, Providence, RI, 2003. xvi+170 pp.



\bibitem{Gonzalez-Mazzeo-Sire}
M.~d.~M. Gonz\'alez, R.~Mazzeo, and Y.~Sire.
\newblock Singular solutions of fractional order conformal {L}aplacians.
\newblock {\em Journal of Geometric Analysis} 22 (2012), no. 3, 845--863.

\bibitem{Gonzalez-Qing}
M.~d.~M. Gonz\'alez and J. Qing
\newblock Fractional conformal Laplacians and fractional Yamabe problems.
\newblock To appear in {\em Analysis and PDE}.

\bibitem{Gousshoub_Gui}
N. Ghoussoub and C. Gui
\newblock {\em On a conjecture of De Giorgi and some related results}
\newblock Math. Ann., 311 (1998), 481-491.





\bibitem{H}
S. Helgason. \newblock {\em Geometric analysis on symmetric spaces. }
 \newblock Second edition. Mathematical Surveys and Monographs, 39. American Mathematical Society, Providence, RI, 2008. xviii+637 pp.

\bibitem{Kumaresan-Prajapat}
S.~Kumaresan and J.~Prajapat.
\newblock Analogue of {G}idas-{N}i-{N}irenberg result in hyperbolic space and
  sphere.
\newblock {\em Rend. Istit. Mat. Univ. Trieste}, 30(1-2):107--112 (1999), 1998.

\bibitem{Mancini-Sandeep}
G. Mancini and K. Sandeep.
\newblock On a semilinear elliptic equation in {$\Bbb H^n$}.
\newblock {\em Ann. Sc. Norm. Super. Pisa Cl. Sci. (5)}, 7(4):635--671, 2008.

\bibitem{Mazzeo:edge-operators}
R.~Mazzeo.
\newblock Elliptic theory of differential edge operators {I}.
\newblock {\em Commun. Partial Differ. Equations}, 16(10):1615--1664, 1991.

\bibitem{Mazzeo-Saez}
R. Mazzeo and M. S\'aez.
\newblock Multiple-layer solutions to the Allen-Cahn equation on hyperbolic space.
\newblock {\em Accepted for publication in Proc. Amer. Math. Soc}.


\bibitem{Pisante-Ponsiglione}
A. Pisante and M. Ponsiglione.
\newblock Phase transitions and minimal hypersurfaces in hyperbolic space.
\newblock {\em Comm. Partial Differential Equations}, 36(5):819--849, 2011.

\bibitem{Rossmann:analysis-hyperbolic}
W. Rossmann.
\newblock Analysis on real hyperbolic spaces.
\newblock {\em J. Funct. Anal.}, 30(3):448--477, 1978.

\bibitem{savin}
O. Savin
\newblock Regularity of flat level sets in phase transitions.
\newblock {\em Ann. of Math.},(2) 169 (2009), no. 1, 41–78.



\bibitem{Silvestre:regularity-obstacle}
L. Silvestre.
\newblock Regularity of the obstacle problem for a fractional power of the
  {L}aplace operator.
\newblock {\em Comm. Pure Appl. Math.}, 60(1):67--112, 2007.

\bibitem{Tataru:Strichartz-hyperbolic}
D. Tataru.
\newblock Strichartz estimates in the hyperbolic space and global existence for
  the semilinear wave equation.
\newblock {\em Trans. Amer. Math. Soc.}, 353(2):795--807 (electronic), 2001.


\bibitem{Triebel:II}
H. Triebel.
\newblock {\em Theory of function spaces. {II}}, volume~84 of {\em Monographs
  in Mathematics}.
\newblock Birkh\"auser Verlag, Basel, 1992.

\end{thebibliography}
\end{document}